\documentclass[12pt]{article}

\usepackage[letterpaper,textwidth=13.5cm,centering]{geometry}
\usepackage{amssymb,latexsym,amsmath,amscd,array,graphicx,amsthm}
\usepackage{mathtools}
\usepackage{tabularx}
\usepackage{array}
\usepackage{booktabs}
\usepackage{float}
\usepackage{placeins}
\usepackage[hidelinks]{hyperref}
\usepackage{caption}

\numberwithin{equation}{section}

\newcommand{\R}{\mathbb{R}}

\newcommand{\Z}{\mathbb{Z}}
\newcommand{\N}{\mathbb{N}}
\newcommand{\id}{\mathrm{id}}

\newcommand{\sgn}{\mathrm{sgn}}

\newcommand{\len}{\mathrm{len}}
\newcommand{\diam}{\mathrm{diam}}
\newcommand{\dist}{\mathrm{dist}}
\newcommand{\conv}{\mathrm{conv}}
\newcommand{\circumference}{\mathrm{circumference}}
\newcommand{\link}{\mathrm{link}}

\theoremstyle{plain}
\newtheorem{thm}{Theorem}[section]
\newtheorem{cor}[thm]{Corollary}
\newtheorem{lem}[thm]{Lemma}
\newtheorem{prop}[thm]{Proposition}

\newtheorem{result}[thm]{Result}

\theoremstyle{definition}
\newtheorem{defin}[thm]{Definition}

\newtheorem{rem}[thm]{Remark}
\newtheorem{example}[thm]{Example}

\newcommand\theoref{Theorem~\ref}
\newcommand\lemref{Lemma~\ref}
\newcommand\remref{Remark~\ref}
\newcommand\propref{Proposition~\ref}

\newcommand\defref{Definition~\ref}

\newcommand\sectref{Section~\ref}
\newcommand\subsectref{Subsection~\ref}
\newcommand\figref{Figure~\ref}

\makeatletter
\newenvironment{proofsketch}[1][\proofname]{%
  \par\pushQED{\qed}%
  \normalfont\topsep6\p@\@plus6\p@\relax
  \trivlist\item[\hskip\labelsep
    \itshape
    Sketch of #1.]\ignorespaces
}{%
  \popQED\endtrivlist\@endpefalse
}
\makeatother

\title{Gordian split links in the\\Gehring ropelength problem}
\author{Friedrich Bauermeister\\[1ex]
\small Department of Mathematics, Dartmouth College\\
\small \texttt{friedrich.bauermeister.gr@dartmouth.edu}}
\date{}

\begin{document}

\maketitle

\begin{abstract}
A thick link is a link in $\mathbb{R}^3$ such that each component of the link lies at distance at least $1$ from every other component. Strengthening the notion of thickness, a thickly embedded link is a thick link whose open radius-$\tfrac{1}{2}$ normal disk bundles of all components are embedded. A thick homotopy is a link homotopy of a thick link that preserves thickness and total length throughout. A thick isotopy is a link isotopy of a thickly embedded link that preserves thick-embeddedness and total length throughout. We construct an isotopically Gordian split link, that is, a thickly embedded 4-component link which is topologically split but which cannot be split by a thick isotopy. This is the first time a Gordian split link is shown to exist in this most permissive setting where length trading between components is allowed. We then prove for the first time that local, non-global minima for Gehring ropelength exist. In particular, we construct a 2-component homotopically Gordian unlink, that is, a link in the link homotopy class of the unlink which cannot be split by any thick homotopy.
\end{abstract}

\vspace{.4\baselineskip}
\begingroup
\small
\noindent MSC (2020): Primary 57K10; Secondary 49Q10, 53A04.\\
\noindent Keywords: ropelength; thick links; physical knot theory; Gehring problem
\par
\endgroup

\section{Introduction}

In 2015, Coward and Hass \cite{TopAndPhysLinkTheoryDist} defined the notion of a physical link isotopy. A physical link isotopy is a link isotopy that keeps a link thickly embedded throughout, as well as keeping the length of each component constant. This made it possible to study a physical knot- and link theory that corresponds to knots and links tied from real pieces of rope. In the same paper, Coward and Hass construct a $2$-component \textit{Gordian split link} which they define as a thickly embedded, topologically split link that cannot be split by a physical link isotopy. More generally, a \textit{Gordian pair of links} is a set of two links in $\R^3$ which are topologically equivalent, but which are not equivalent under physical deformations. Here \textit{topologically equivalent} refers either to being in the same link isotopy or homotopy class, and \textit{physical deformation} refers to either link homotopy or link isotopy, subject to additional geometric constraints on length and thickness. For length, there are two common constraints: either demanding that the length of each component individually does not increase throughout the homotopy/isotopy, or that their sum does not increase throughout the homotopy/isotopy. If only the sum of lengths is constrained, we say that \textit{length trading} between components is permitted. The thickness constraint usually demands that links stay thick links (see \defref{definition: thick link}) or thickly embedded links (see \defref{definition: thickly embedded link}) throughout the homotopy or isotopy respectively. When considering link isotopy, sometimes additional curvature constraints are imposed.\\

The original Gehring ropelength problem asked for the minimal length of a closed curve in $\R^3$, given that it is linked with another curve from which it keeps at least unit distance. The answer was quickly shown to be $2\pi$, realized by two Hopf-linked unit circles. Given some link homotopy class, the general Gehring ropelength problem \cite{CriticalityForGehringProblem} asks for the minimal ropelength of a thick link in that homotopy class. A link realizing this minimal length is called an \textit{ideal link} of the homotopy class. While the precise length of ideal links is known only in a few cases, there are various upper and lower bounds. Cantarella, Fu, Kusner, Sullivan, and Wrinkle \cite{CriticalityForGehringProblem} defined a notion of criticality for the general Gehring ropelength problem, which is a necessary but not sufficient criterion for being an ideal link.\\

The paper is organized as follows. In \sectref{Main results}, we state the main results, and in \sectref{Overview of existing results about Gordian pairs}, we place them in the context of previously known examples of Gordian pairs. \sectref{Preliminaries} introduces the relevant notions of thickness, Gordian pairs, and minimizers of ropelength. In \sectref{bounds on ropelength}, we collect the lower bounds on ropelength used in the proofs. \sectref{An isotopically Gordian split link exists} constructs a $4$-component Gordian split link. Finally, \sectref{Homotopically Gordian pairs exist in every link homotopy class with 2 components} constructs local, non-global minima for ropelength and Gordian pairs in every link homotopy class with two components.

\section{Main results}\label{Main results}
We briefly state the main results of this paper. All notions of thick links, thickly embedded links, thick link homotopies, thick link isotopies, and the notion of being a sink for ropelength referred to in this main results section are as in Definitions~\ref{definition: thick link}, \ref{definition: thickly embedded link}, \ref{definition: thick homotopy/isotopy}, and \ref{definition: sink of ropelength}. 

\begin{result}[\theoref{theorem: A Gordian split link exists}]
There exists a $4$-component thickly embedded link that is topologically split but which cannot be split by a thick homotopy. 
\end{result}
This proves the existence of a configuration that is physical in the strongest sense which cannot be split even under the most permissive of thickness constraints. Thus, the configuration is a Gordian split link both as a thick link and as a thickly embedded link. 

\begin{result}[\theoref{theorem: Gordian pairs exist in all link homotopy classes of two components}]
For every link homotopy class with $2$ components, there exists a thick link in that class which is a non-global, local minimum and a sink for ropelength. In particular, there are Gordian pairs in every link homotopy class with $2$ components.
\end{result}
This is the first time local, non-global minimizers in the Gehring ropelength problem have been proven to exist at all, and the proof is constructive. In particular, this settles the question of whether homotopically Gordian unlinks exist in the affirmative.
\begin{result}[\theoref{theorem: a Gordian unlink exists}]
There exists a $2$-component thick link in the link homotopy class of the unlink which cannot be split by a thick link homotopy.
\end{result}
Together, these results demonstrate that thickness constraints meaningfully obstruct the splitting of links, and that the energy landscape of Gehring ropelength has many local minima.  

\section{Overview of existing results\\
about Gordian pairs}\label{Overview of existing results about Gordian pairs}

\FloatBarrier
\begin{table}[H]
  \small
  \centering
  \renewcommand{\arraystretch}{1.3}

  \begin{tabularx}{\textwidth}{|>{\raggedright\arraybackslash}X 
                               |>{\raggedright\arraybackslash}X 
                               |>{\raggedright\arraybackslash}X 
                               |>{\raggedright\arraybackslash}X 
                               |>{\raggedright\arraybackslash}X|}
  \hline
  \multicolumn{5}{|c|}{\textbf{Isotopically Gordian Pairs}} \\
  \hline
   & Any Gordian pair & Split link & $2$-component split link & $2$-component unlink \\
  \hline
  Without length trading and with extra curvature constraints 
  & Coward and Hass \cite{TopAndPhysLinkTheoryDist} 
  & Coward and Hass \cite{TopAndPhysLinkTheoryDist} 
  & Coward and Hass \cite{TopAndPhysLinkTheoryDist} 
  & Ayala \cite{ThinGordianUnlinks} \\
  \hline
  Without length trading 
  & Coward and Hass \cite{TopAndPhysLinkTheoryDist} 
  & Coward and Hass \cite{TopAndPhysLinkTheoryDist} 
  & Coward and Hass \cite{TopAndPhysLinkTheoryDist} 
  & Ayala and Hass \cite{GordianUnlinks},  Kusner and Kusner \cite{XaraxUnlinks} \\
  \hline
  With length trading
  & Kusner and Kusner \cite{GordianPairOfLinks} 
  &  \sectref{An isotopically Gordian split link exists}
  &  
  & \\
  \hline
  With length trading and allowing homotopies instead of isotopies 
  & Kusner and Kusner \cite{GordianPairOfLinks} 
  & \sectref{An isotopically Gordian split link exists}
  &  
  & \\
  \hline
  \end{tabularx}

  \vspace{1em}

  \begin{tabularx}{\textwidth}{|>{\raggedright\arraybackslash}X 
                               |>{\raggedright\arraybackslash}X 
                               |>{\raggedright\arraybackslash}X 
                               |>{\raggedright\arraybackslash}X 
                               |>{\raggedright\arraybackslash}X|}
  \hline
  \multicolumn{5}{|c|}{\textbf{Homotopically Gordian Pairs}} \\
  \hline
   & Any Gordian pair & Split link & $2$-component unlink & All $2$-component links \\
  \hline
  Without length trading 
  & Kusner and Kusner \cite{GordianPairOfLinks} 
  & \sectref{An isotopically Gordian split link exists}, \sectref{Homotopically Gordian pairs exist in every link homotopy class with 2 components}
  & \sectref{Homotopically Gordian pairs exist in every link homotopy class with 2 components}
  & \sectref{Homotopically Gordian pairs exist in every link homotopy class with 2 components} \\
  \hline
  With length trading 
  & Kusner and Kusner \cite{GordianPairOfLinks} 
  & \sectref{An isotopically Gordian split link exists}, \sectref{Homotopically Gordian pairs exist in every link homotopy class with 2 components}
  & \sectref{Homotopically Gordian pairs exist in every link homotopy class with 2 components}
  & \sectref{Homotopically Gordian pairs exist in every link homotopy class with 2 components} \\
  \hline
  \end{tabularx}

  \caption{Summary of known examples for Gordian pairs in different settings. }
  \label{tab:GordianPairs}
\end{table}
\FloatBarrier

The first Gordian split link was constructed by Coward and Hass in 2015 \cite{TopAndPhysLinkTheoryDist}. In 2023, Kusner and Kusner \cite{GordianPairOfLinks} constructed a Gordian pair of thickly embedded $7$-component links. The links in the pair are link isotopic to each other, but there is no thick homotopy between them. So even though the links are equivalent in the strongest topological sense, they are not equivalent even in the weakest physical sense. It was Kusner and Kusner who first coined the term \textit{Gordian pair}. In \cite{ThinGordianUnlinks} Ayala constructs a thickly embedded $2$-component link in the isotopy class of the unlink which cannot be split by thick isotopy under additional curvature constraints. This was the first example of a Gordian unlink. Recently, Ayala and Hass constructed a different example of a Gordian unlink without needing any additional curvature constraints. Independently, Kusner and Kusner announced the construction of a Gordian unlink \cite{XaraxUnlinks}. Table \ref{tab:GordianPairs} summarizes both the existing results on Gordian pairs and the author's contributions in this paper. The table is split into two parts, considering links with respect to isotopy and homotopy respectively. The columns of each table denote what kind of example was constructed. The rows of each table denote which geometric constraints were considered, with the strength of constraints being relaxed from top to bottom. The paper referenced in each cell is the paper in which the \textit{first} example of the corresponding Gordian pair was constructed. The tables give partial orders: the further down and to the right a cell in a given table is, the harder it is to construct an example for it.

\section{Preliminaries}\label{Preliminaries}

\begin{defin}
Let $A\subseteq\R$ be an interval with non-empty interior, or let $A=S^1$. A \textit{curve} is a continuous map $C:A\to\R^3$. If $A\subseteq\R$, we define
$$\len(C)\coloneqq\sup\left\{\sum_{i=1}^n\left\|C(t_i)-C(t_{i-1})\right\|\ \middle|\ n\geq 1,\ t_0<t_1<\dots<t_n,\ t_i\in A\right\}.$$
If $A=S^1$, we define $\len(C)\coloneqq\len(C\circ q)$, where $q:[0,1]\to S^1$ is the standard quotient map. We call $C$ \textit{rectifiable} if $\len(C)<\infty$. If $A=S^1$, or if $A$ is compact and $C(\min A)=C(\max A)$, we call $C$ a \textit{closed curve}. Unless otherwise stated, no regularity beyond continuity is assumed.
\end{defin}

\begin{defin}
Let $n\in\N$ and let $S_1,S_2,\dots,S_n$ be copies of $S^1$. By an $n$-component \textit{link} we mean a continuous map
$$L=(C_1,\dots,C_n):S_1\sqcup S_2\sqcup\dots\sqcup S_n\to\R^3$$
such that $C_i(S_i)\cap C_j(S_j)=\emptyset$ whenever $i\neq j$. The individual components are not required to be injective. In the literature such maps are commonly called link maps, but throughout this paper we will refer to them as links.
\end{defin}

\begin{defin}
Let
$$H:[0,1]\times(S_1\sqcup S_2\sqcup\dots\sqcup S_n)\to\R^3$$
be continuous. We call $H$ a \textit{link homotopy} if $H(t,\cdot)$ is a link for every $t\in[0,1]$. We call $H$ a \textit{link isotopy} if $H(t,\cdot)$ is an embedding for every $t\in[0,1]$.
\end{defin}

\begin{defin}
Let $A,B \subseteq \R^n $ be non-empty sets, let $x\in \R^n$ and let $f,g$ be functions with image in $\R^n$. Then we denote the distance of $A$ to $B$ with $\dist(A,B)\coloneqq \inf\{||a-b||\mid a\in A, b\in B\}$; and we define $\dist(f,g)$, $\dist(f,A)$, $\dist(x,A)$, $\dist(x,f)$ by identifying $x$ with $\{x\}$ and $f,g$ with their image. If $f,g:M\to \R^n$ then $||f-g||\coloneqq \sup\{||f(x)-g(x)||\mid x \in M\}$.
\end{defin}
\begin{defin}
If $L = \bigcup_{i=1}^n C_i$ is a link, then we define its \textit{ropelength} as the sum of the lengths of its components.
$$\len(L) = \sum_{i=1}^n \len(C_i).$$
\end{defin}

\begin{defin}\label{definition: thick link}
Let $L = \left(C_1,\dots,C_n\right) : S_1 \sqcup \dots \sqcup S_n \to \R^3$ be a link. We say that $L$ has \textit{thickness} $d$ if
$$\dist(C_i,C_j) \geq d$$
for every $i,j \in \{1,\dots,n\}, i\neq j$.  We define $\tau(L)$ as the largest such $d$. If $\tau(L) \geq 1$ we call $L$ a \textit{thick link}.
\end{defin}

\begin{defin}\label{definition: thickly embedded link}
We say that $L$ is \textit{$d$-thickly embedded} if $L$ is $\mathcal{C}^1$ and if the open radius-$\tfrac{d}{2}$ normal disk bundle around $L$ is embedded. We define $\tau_e(L)$ as the largest such $d$. If $\tau_e(L)\geq 1$ we call $L$ a \textit{thickly embedded link}.
\end{defin}

\begin{defin}\label{definition: thick homotopy/isotopy}
Let $H:[0,1] \times \left(S_1 \sqcup S_2 \sqcup \dots \sqcup S_n\right) \to \R^3$ be a link homotopy (isotopy). We call $H$ a \textit{$d$-thick homotopy (isotopy)} if for all $t\in [0,1]$:
\begin{itemize}
\item $L_t = H(t,\cdot)$ has thickness $d$ (is $d$-thickly embedded),
\item $\len(L_t)\leq \len(L_0)$. 
\end{itemize}
If $d=1$ we call $H$ a \textit{thick homotopy (isotopy).}
\end{defin}

\begin{defin}\label{definition: reachable}
For thick links $L_1$ and $L_2$ we define
\begin{align*}
L_1 \preceq L_2 &\iff \text{ There is a thick link homotopy from }L_2 \text{ to } L_1.\\
L_1 \simeq L_2 &\iff L_1 \preceq L_2 \text{ and } L_2 \preceq L_1
\end{align*}
For thickly embedded links $L_1$ and $L_2$ we define
\begin{align*}
L_1 \preceq_e L_2 &\iff \text{ There is a thick link isotopy from }L_2 \text{ to } L_1.\\
L_1 \simeq_e L_2 &\iff L_1 \preceq_e L_2 \text{ and } L_2 \preceq_e L_1.
\end{align*}
If $L_1 \preceq L_2$ or $L_1 \preceq_e L_2$ we say that $L_1$ is \textit{reachable} from $L_2$.
\end{defin}

\begin{prop}
If $L_1 \preceq L_2$, then
$$\len(L_1) = \len(L_2) \iff L_2 \preceq L_1 \iff L_1 \simeq L_2,$$
and the same for $\preceq_e$.
\end{prop}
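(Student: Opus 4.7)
The proposition is a chain of equivalences under the assumption $L_1 \preceq L_2$. The plan is to observe that one equivalence is immediate from the definitions, and then prove the remaining equivalence as a two-way implication, where both directions follow from carefully reading off the length condition in the definition of thick homotopy (\defref{definition: thick homotopy/isotopy}).

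First, I would dispose of $L_2 \preceq L_1 \iff L_1 \simeq L_2$ by noting that, by \defref{definition: reachable}, $L_1 \simeq L_2$ just means $L_1 \preceq L_2$ and $L_2 \preceq L_1$, and the first conjunct is the standing hypothesis. So the content is really $\len(L_1) = \len(L_2) \iff L_2 \preceq L_1$.

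For the implication $L_2 \preceq L_1 \implies \len(L_1) = \len(L_2)$, I would apply the length-nonincrease clause of \defref{definition: thick homotopy/isotopy} to both homotopies: the hypothesis $L_1 \preceq L_2$ gives a thick homotopy from $L_2$ to $L_1$, whose length bound forces $\len(L_1) \leq \len(L_2)$; and $L_2 \preceq L_1$ gives one in the other direction, forcing $\len(L_2) \leq \len(L_1)$.

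The direction $\len(L_1) = \len(L_2) \implies L_2 \preceq L_1$ is the step that actually uses the equal-length assumption. Given a thick homotopy $H:[0,1]\times(S_1 \sqcup \cdots \sqcup S_n) \to \R^3$ from $L_2$ to $L_1$, I would define the time-reversed homotopy $\tilde H(t,\cdot) := H(1-t,\cdot)$. Since $\tilde H$ passes through exactly the same links $L_t$ in the opposite order, each $\tilde H(t,\cdot) = H(1-t,\cdot)$ still has thickness at least $1$, and $\tilde H$ is still a link homotopy (the reverse of a link homotopy is a link homotopy). The only nontrivial point is the length condition: for $\tilde H$ to be a thick homotopy starting at $L_1$, one needs $\len(\tilde H(t,\cdot)) \leq \len(L_1)$ for all $t$. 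But $\len(\tilde H(t,\cdot)) = \len(H(1-t,\cdot)) \leq \len(L_2) = \len(L_1)$, where the inequality uses that $H$ is a thick homotopy starting at $L_2$ and the final equality is the hypothesis. Hence $\tilde H$ witnesses $L_2 \preceq L_1$. The argument for $\preceq_e$ is identical, replacing ``link homotopy'' by ``link isotopy'' and ``thickness $\geq 1$'' by ``$1$-thickly embedded,'' both of which are preserved by time reversal.

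The argument is largely routine; the only point to be careful about is that time-reversing does not automatically preserve the length condition, which is why the equal-length hypothesis is needed precisely to promote $\len(H(1-t,\cdot)) \leq \len(L_2)$ to the bound $\leq \len(L_1)$ required at the new initial time.
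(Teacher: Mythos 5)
Your proof is correct and takes essentially the same approach as the paper: the key step in both is to time-reverse the homotopy $H$ and observe that the equal-length hypothesis lets the length bound $\len(H(1-t,\cdot)) \leq \len(L_2)$ be promoted to the bound $\leq \len(L_1)$ required at the new starting time. The paper's proof is considerably more terse (and in fact contains a small typo, saying "from $L_1$ to $L_2$" where it should say "from $L_2$ to $L_1$"), but your more careful write-up is precisely the spelled-out version of the same argument.
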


\begin{proof}
Only the implication $\len(L_1) = \len(L_2) \implies L_2 \preceq L_1$ is interesting. Let $H_t$ be the thick homotopy (isotopy) from $L_2$ to $L_1$. Since $\len(L_1)=\len(L_2)$, the reverse homotopy $H_{1-t}$ is a thick homotopy (isotopy) from $L_1$ to $L_2$.
\end{proof}

\begin{defin}\label{definition: Gordian pair of links}
A pair of thick (thickly embedded) links $(L_1,L_2)$ is called \textit{Gordian as thick links (as thickly embedded links)} if $L_1$ and $L_2$ are in the same link homotopy (isotopy) class, but neither is reachable from the other. 
\end{defin}
We will refer to a pair that is Gordian as thick links as \textit{homotopically Gordian}, and to a pair that is Gordian as thickly embedded links as \textit{isotopically Gordian}. When it is clear from context whether we are working in the setting of thick links or thickly embedded links, we will simply call such a pair of links a \textit{Gordian pair}.

For $S\subseteq\R^n$, we denote its convex hull by $\conv(S)$.

\begin{defin}
A thick (thickly embedded) link $L = \bigcup_{i\in I} C_i$ is called \textit{separated} if there is some $\emptyset \subsetneq J \subsetneq I$ such that 
$$\conv\left(\bigcup_{i\in J} C_i\right) \cap \conv\left(\bigcup_{i\in I\setminus J} C_i\right) = \emptyset.$$
A thick (thickly embedded) link $L$ is called \textit{split as a thick (thickly embedded) link} if there is a thick (thickly embedded) link homotopy (isotopy) from $L$ to some separated link $L'$.
\end{defin}

\begin{defin}\label{definition: Gordian split link}
A thick (thickly embedded) link $L$ is called a homotopically (isotopically) \textit{Gordian split link} if $L$ is topologically split through a link homotopy (isotopy), but not split as a thick (thickly embedded) link.
\end{defin}
It is easy to see that every Gordian split link $L$ is part of a Gordian pair. If we can write $L=L_1 \cup L_2$ where $L_1$ and $L_2$ are not topologically linked with each other, define $L' = L_1 \cup T(L_2)$ for some translation $T$ sufficiently far away from $L_1$. Clearly $L$ and $L'$ are in the same link homotopy (isotopy) class. But since $L$ was a Gordian split link, we have $L'\npreceq L$. Since $\len(L)=\len(L')$, we have $L\npreceq L'$. Hence $(L,L')$ is a Gordian pair.

\begin{rem}
For the link homotopy class with one component, we have $$L_1 \preceq L_2 \iff \len(L_1) \leq \len(L_2).$$
Therefore, no Gordian pair can exist in the $1$-component link homotopy class. 
\end{rem}

\begin{rem}
It is not known whether there is a Gordian pair of thickly embedded knots. There is some computational evidence for the existence of a Gordian unknot, i.e., a thickly embedded knot in the isotopy class of the unknot which has no thick isotopy to a circle \cite{GordianUnknot}. 
\end{rem}

\begin{defin}
We call a thick link $L$ a \textit{global minimum} for ropelength if for every thick link $L'$ in the same link homotopy class as $L$, we have
$$\len(L')\geq \len(L).$$
We call a thick link $L$ a \textit{local minimum} for ropelength if there is some $\varepsilon>0$ such that for every thick link $L'$ 
$$||L-L'||<\varepsilon \implies \len(L')\geq \len(L).$$
\end{defin}

Global and local minima of ropelength are interesting in and of themselves, and much is still unknown about them. For example, up until the writing of this article, the only local minima proven to exist were also global minima. In \cite{CriticalityForGehringProblem}, Cantarella, Fu, Kusner, Sullivan, and Wrinkle use their criterion of ropelength criticality to construct \textit{suspected} local and global minimizers in different link homotopy classes, for example for the Borromean rings. When studying thick homotopies, there is another interesting kind of minimization.

\begin{defin}[sink of ropelength]\label{definition: sink of ropelength}
If a thick link $L$ is a minimal element of the preorder $\preceq$, then we call $L$ a \textit{sink} of the ropelength functional.
\end{defin}
Sinks are exactly the configurations which cannot be shortened by a thick homotopy. We briefly define the notion of a sink more generally, and see how it relates to the notion of local minima.
\begin{defin}
Let $X$ be a topological space, $f:X\to \R$ a (not necessarily continuous) function. We call $x\in X$ a \textit{sink} of $f$ if for all continuous $\gamma:[0,1]\to X$ with $\gamma(0)=x$, we have
$$(f\circ\gamma)(t) \leq f(x) \text{ for all } t \implies f\circ \gamma \text{ constant.}$$
\end{defin}

It is easy to see that every global minimum and every strict local minimum is a sink. However, even for nice spaces $X$ and continuous functions $f$, there are local minima which are not sinks and sinks which are not local minima.

\begin{example}
Take $X=\R$ and $f(x)=\min(x,0)$. Then $x=1$ is a local minimum, but not a sink.
\end{example}

\begin{example}
Take $X=\R$ and $f(x)=\begin{cases}x\sin\left(\frac{1}{x}\right), &x\neq 0\\ 0, &x=0\end{cases}$\\ 
Then $x=0$ is a sink, but not a local minimum.
\end{example}

\begin{defin}[strictly locally minimizing set]
Let $X$ be a topological space and $f:X\to \R$. We say that a subset $S\subseteq X$ is a \textit{strictly locally minimizing set} of $f$ if $f$ is constant on $S$ and if for all $x\in S$, there is an open neighborhood $U \subseteq X$ of $x$ such that
$$y\in U \implies f(y)\geq f(x)$$
with equality only if $y \in S$.
\end{defin}

\begin{prop}\label{proposition: every point of a closed, strictly minimizing set is a sink}
Let $X$ be a topological space, $f:X\to \R$, and let $S$ be a strictly locally minimizing set of $f$. If $S\subseteq X$ is closed, then every $x\in S$ is a sink of $f$. 
\end{prop}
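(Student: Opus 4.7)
The plan is to fix $x \in S$ and an arbitrary continuous path $\gamma : [0,1] \to X$ with $\gamma(0) = x$ and $f(\gamma(t)) \leq f(x)$ for all $t$, and to show that $\gamma(t) \in S$ for all $t$. Once that is established, the constancy of $f$ on $S$ gives that $f\circ \gamma$ is constant, which is exactly the sink condition. So the whole task is to force the image of $\gamma$ into $S$.

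To do this I will use connectedness of $[0,1]$. Let $U$ be the open neighborhood of $S$ provided by the strict local minimization hypothesis (with $x$ playing its role; since $f$ is constant on $S$, the neighborhood works uniformly in the base point). Set
\[
A \coloneqq \gamma^{-1}(S), \qquad B \coloneqq \gamma^{-1}(U).
\]
Since $S$ is closed and $U$ is open, continuity of $\gamma$ makes $A$ closed and $B$ open in $[0,1]$, and $S\subseteq U$ gives $A\subseteq B$. I then claim $B\subseteq A$: for any $t\in B$ we have $\gamma(t)\in U$, so the strict local minimization hypothesis yields $f(\gamma(t))\geq f(x)$; combined with the standing assumption $f(\gamma(t))\leq f(x)$ this forces equality, and the \emph{strictness} clause of the definition then delivers $\gamma(t)\in S$, i.e.\ $t\in A$. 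Hence $A = B$ is both open and closed in $[0,1]$; since $0\in A$ (as $\gamma(0)=x\in S$) and $[0,1]$ is connected, $A = [0,1]$, completing the argument.

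There is really no obstacle here beyond choosing the two sets $A$ and $B$ and noticing that the hypothesis $f\circ \gamma \leq f(x)$ together with the strict minimization condition collapses $B$ onto $A$. What the proof makes clear is where each hypothesis is used: closedness of $S$ is what makes $A$ closed (and thus the clopen argument go through), openness of $U$ makes $B$ open, and the strictness of the minimization (``equality only if $y\in S$'') is precisely what prevents $\gamma$ from sliding along a level set of $f$ out of $S$ into $U\setminus S$.
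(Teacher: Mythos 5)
Your proof is correct and follows essentially the same route as the paper's: both show that $\gamma^{-1}(S)$ is a nonempty clopen subset of $[0,1]$ and invoke connectedness. The only presentational difference is that you introduce $B=\gamma^{-1}(U)$ globally and show $A=B$, whereas the paper argues openness of $\gamma^{-1}(S)$ pointwise via a neighborhood $U_t$ around each $t$; these are interchangeable since $f$ is constant on $S$, as you note.
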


\begin{proof}
Let $c=f(x)$, let $\gamma:[0,1]\to X$ with $\gamma(0)=x$ and with $f(\gamma(t))\leq c$ for all $t\in [0,1]$. Let $J =\gamma^{-1}(S)$. Clearly $0\in J$, so $J\neq \emptyset$. Further, $J$ is the preimage of a closed set under a continuous map, hence closed. Next we show that $J$ is open. Let $t\in J$. Since $\gamma(t)\in S$ we have $f(\gamma(t))=c$. Further there is a neighborhood $U_t$ around $t$ with $$t'\in U_t \implies  c=f(\gamma(t)) \leq f(\gamma(t'))$$
and with equality if and only if $\gamma(t')\in S$. By assumption $f(\gamma(t'))\leq c$. Therefore $f(\gamma(t'))=c$ and hence $\gamma(t') \in S$ . This shows that $J$ is open. Since $J$ is a non-empty, closed and open subset of $[0,1]$ we have $J=[0,1]$. So $\gamma([0,1])\subseteq S \subseteq f^{-1}(c)$, hence $f\circ \gamma$ is constant.
\end{proof}

\section{Bounds on ropelength}\label{bounds on ropelength}

\begin{thm}[Original Gehring Problem]\label{theorem: Gehring Problem}
Let $C_1$ and $C_2$ be two linked closed curves in $\R^3$ with $\dist(C_1,C_2)\geq 1$. Then $\len(C_1)\geq 2\pi$. If $\len(C_1)=2\pi$, then $C_1$ parametrizes a unit circle.
\end{thm}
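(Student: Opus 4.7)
The plan is to combine the solution of Plateau's problem with the monotonicity formula and the sharp isoperimetric inequality for area-minimizing surfaces in $\R^3$. First, I would invoke Federer--Fleming to obtain an area-minimizing integral $2$-current $\Sigma$ in $\R^3$ with $\partial\Sigma = C_1$. Because $C_1$ and $C_2$ are linked, the linking number $\mathrm{lk}(C_1,C_2)$ is a nonzero integer equal to the algebraic intersection of $C_2$ with any such spanning surface; in particular there exists a point $x^* \in \mathrm{spt}(\Sigma)\cap C_2$.

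Second, since $x^*\in C_2$ and $\dist(C_1,C_2)\geq 1$, the open ball $B(x^*,1)$ is disjoint from $\partial\Sigma = C_1$. The monotonicity formula for stationary $2$-dimensional varifolds in $\R^3$ then yields $\mathrm{Area}(\Sigma\cap B(x^*,1)) \geq \pi$, and in particular $\mathrm{Area}(\Sigma)\geq \pi$.

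Third, the sharp isoperimetric inequality for area-minimizing integral $2$-currents in $\R^3$ gives $\mathrm{Area}(\Sigma)\leq \len(C_1)^2/(4\pi)$. Chaining the two area estimates,
\[
\pi \;\leq\; \mathrm{Area}(\Sigma) \;\leq\; \frac{\len(C_1)^2}{4\pi},
\]
from which $\len(C_1)\geq 2\pi$ follows immediately. Equality forces $\Sigma$ to be a flat unit disk, recovering the extremal configuration of two orthogonal unit Hopf circles.

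The main technical hurdle is the sharp constant $\tfrac{1}{4\pi}$ in the isoperimetric inequality: for simply connected minimal surfaces this is Carleman's classical estimate, while in the general case, where $C_1$ may be knotted and $\Sigma$ need not be a disk, one needs the extension due to Almgren for area-minimizing integral currents. The remaining ingredients --- linking-implies-intersection, and the monotonicity lower bound on density --- are standard once the spanning current has been produced.
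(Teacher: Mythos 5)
Your argument is correct, but it takes a genuinely different and substantially heavier route than the paper's. The paper defers to Ortel's elementary proof (published in \cite{CriticalityForGehringProblem}), which is the same cone-and-projection idea driving \theoref{theorem: 2Pi r + circumference} and \propref{proposition: two-point arbitrary radius inequality}: since $C_1$ and $C_2$ are linked, some point $p\in C_2$ lies in $\conv(C_1)$ (e.g.\ $C_2$ must pierce the cone over $C_1$ from its center of mass), and radially projecting $C_1$ to the unit sphere about $p$ is length-nonincreasing because $C_1$ avoids $B(p,1)$; the projected closed curve cannot lie in an open hemisphere because $p\in\conv(C_1)$, and a closed spherical curve not contained in an open hemisphere has length at least $2\pi$. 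Your route instead invokes Federer--Fleming existence, the interior monotonicity formula, and---crucially---Almgren's sharp isoperimetric inequality for area-minimizing integral currents, which is a deep theorem doing all the work that the elementary projection does in Ortel's argument. Both prove the statement sharply, but the cone argument is far lighter and, more importantly for this paper, it refines directly into the quantitative variants the paper actually uses later (\propref{proposition: two-point arbitrary radius inequality}, \propref{proposition: main inequality without epsilons}, \propref{proposition: main inequality with epsilons.}, \theoref{theorem: estimate of length of almost closed curves for multiple curves}), where one needs to track the distance between several chosen points of the spanning cone; your minimizer-based approach does not localize that way. One small caveat: you pass from ``linked'' to $\mathrm{lk}(C_1,C_2)\neq 0$, which is the right reading here (the Gehring problem is stated for homological linking), but note that ``topologically linked'' in general does not imply nonzero linking number, in which case the existence of $x^*\in\mathrm{spt}(\Sigma)\cap C_2$ would need the separate observation that a filling of $C_1$ disjoint from $C_2$ would split the link.
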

An elegant proof of this theorem was found by Marvin Ortel, and was first published in \cite{CriticalityForGehringProblem}. The statement can be strengthened and generalized in various ways. We will give a few of the most useful generalizations and some important consequences. 

\begin{defin}
Let $C:S^1\to \R^3$ be a closed curve, and let $p \in \R^3$. We define the \textit{$p$-cone} of $C$ as the disk $K:\left(S^1\times [0,1]\right)/_{\sim} \to \R^3$, $K(x,y)=y\cdot C(x) + (1-y)\cdot p$. Here $(x,y)\sim(x',y') \iff y=0=y'$. If $p$ is the center of mass of $C$, we call $K$ the \textit{center-of-mass spanning disk} of $C$.
\end{defin}

\begin{thm}\label{theorem: 2Pi r + circumference}
Let $C$ be a closed curve, $p$ a point in the convex hull of $C$, $K$ the $p$-cone of $C$. Let $r\geq 0$ and let $x_1,x_2,\dots,x_n \in K$ be the vertices of a convex polygon $P \subseteq K$, such that $ \dist(x_i,C)\geq r$ for all $i=1,\dots,n$. Then
$$\len(C) \geq 2r\pi  + \circumference(P).$$
\end{thm}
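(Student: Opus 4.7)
The plan is to reduce the estimate to a two-dimensional enclosure problem by projecting $C$ onto the plane containing $P$. First, I would assume without loss of generality that the convex polygon $P$ lies in a plane $\Pi \subset \R^3$, and let $\pi : \R^3 \to \Pi$ be orthogonal projection. Since $\pi$ is $1$-Lipschitz we have $\len(C) \geq \len(\bar C)$ for $\bar C := \pi(C)$, so it suffices to prove the planar inequality $\len(\bar C) \geq 2\pi r + \circumference(P)$.

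The heart of the argument is an enclosure statement: I would show that $\bar C$ has winding number at least $1$ around every point of the planar Minkowski sum $P \oplus B_r^\Pi := \{ y \in \Pi : \dist_\Pi(y, P) \leq r \}$. Granting this, the classical fact that a closed planar curve winding at least once around a convex region $D$ has length at least $\mathrm{perim}(D)$, applied to the convex set $D = P \oplus B_r^\Pi$ whose perimeter is exactly $\circumference(P) + 2\pi r$ (straight edges of total length $\circumference(P)$ plus rounded corners whose arcs sum to $2\pi r$, since the exterior angles of a convex polygon total $2\pi$), finishes the proof. For points of $P$ itself the enclosure is immediate: identifying the winding number of $\bar C$ around $y$ with the algebraic intersection number of the perpendicular line $\ell_y$ through $y$ with any surface spanning $C$, the cone $K$ gives such a spanning disk, and $y \in P \subseteq K$ forces the intersection at $y$.

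The \emph{main obstacle} is extending this enclosure from $P$ to the full planar $r$-neighborhood. The difficulty is that orthogonal projection is only distance-decreasing, so the three-dimensional hypothesis $\dist(x_i, C) \geq r$ does not directly yield a planar distance bound for $\bar C$: indeed $\dist_\Pi(x_i, \bar C) \leq \dist(x_i, C)$, pointing the wrong way. My proposed remedy is to modify the spanning surface: instead of the bare cone $K$, I would use a surface $\Sigma$ with $\partial \Sigma = C$ obtained by attaching to $K$, near each vertex $x_i$, a small disk cap contained in the ball $B_r(x_i)$, which is possible precisely because $B_r(x_i) \cap C = \emptyset$ by hypothesis. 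Then for every $y \in \Pi$ within planar distance $r$ of some $x_i$, the line $\ell_y$ meets the attached cap, contributing the needed intersection, and hence the needed winding. The last technical step, which I expect to be the chief hurdle, is interpolating these caps along the edges of $P$ using its convexity, so that the enclosure extends to the entire Minkowski sum rather than just to the union of the $r$-disks around the vertices.
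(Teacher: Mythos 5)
The paper does not give a self-contained proof of this theorem: it defers to Coward--Hass's argument for the case $n=2$, $r=2$ and asserts that it generalizes. So I cannot compare your proposal with a proof printed in the paper, but I can say your approach has a genuine gap at the step you describe as ``immediate.'' You write that, for $y\in P$, the winding number of $\bar C$ around $y$ is nonzero because $y\in P\subseteq K$ ``forces the intersection at $y$'' of the line $\ell_y$ with the spanning disk $K$. That only gives a nonempty set-theoretic intersection $\ell_y\cap K$, whereas the winding number is the \emph{algebraic} intersection number $\ell_y\cdot K=\mathrm{lk}(\ell_y,C)$, and a nonempty intersection can have total algebraic degree zero (the cone $K$ is in general an immersed or singular disk, and the sheets through $y$ can cancel). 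So even the enclosure of $P$ itself is not established, let alone the Minkowski sum.

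There is a second, more structural, problem: the very first reduction---projecting $C$ orthogonally to a plane $\Pi$ containing $P$ and then proving a planar enclosure estimate for $\bar C$---can lose too much. A concrete test: take $n=1$, $x_1=0$, $r=1$, $C$ the unit circle in the $xz$-plane, $p=0$. All hypotheses hold and $\len(C)=2\pi$ is exactly the claimed bound. But if one chooses $\Pi$ to be the $xy$-plane, $\bar C$ degenerates to a segment of length $4<2\pi$, so no planar enclosure inequality for $\bar C$ can recover the bound. For $n\le 2$ the choice of $\Pi$ is not determined by $P$, so the WLOG is not free; for $n\ge 3$ the plane is forced and the situation is more constrained, but you would still need a separate argument that the planar inequality holds there, and the winding-number gap above is still open. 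The ``cap attachment'' idea for vertex neighborhoods and the interpolation along edges that you flag as the chief hurdle suffer from the same set-theoretic versus algebraic confusion. The known proofs of results of this type avoid orthogonal projection onto $\Pi$ and instead use a $1$-Lipschitz nearest-point retraction onto $\partial\bigl(\conv\bigcup_i B_r(x_i)\bigr)$ (for $n=1$, radial projection onto the sphere $S_r(x_1)$), together with an argument that the retracted curve cannot lie in an open cap; the hypotheses $p\in\conv(C)$, $x_i\in K$ are exactly what rule out such a cap, via a half-space separation argument rather than a winding-number count.
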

\begin{proof}
If $C$ is not rectifiable, the statement is trivial. Suppose therefore that $C$ is rectifiable. The argument of \cite[Lemma 2.2]{GordianUnlinks} shows that the cone angle of $K$ at $p$ is at least $2\pi$. Although the result is stated there for center-of-mass spanning disks, its proof only uses that $p\in\conv(C)$. Extending the rays of $K$ beyond $C$ therefore gives a complete $CAT(0)$ cone surface. Coward and Hass establish the corresponding boundary-length estimate on such surfaces in \cite[Proposition 2.1 and the subsequent remark]{TopAndPhysLinkTheoryDist}. Replacing their triangle by $P$ and projecting radially onto circles of radius $r$, their argument gives $$\len(C)\geq 2\pi r+\circumference(P).$$
\end{proof}
For $n=2$, we get the following version of this theorem.
\begin{prop}\label{proposition: two-point arbitrary radius inequality}
Let $C$ be a closed curve, $p \in \conv(C)$, $K$ the $p$-cone of $C$. Let $r\geq 0$ and let $x,y \in K$ be such that $\dist(x,C),\dist(y,C) \geq r$. Then $$\len(C) \geq 2\pi r + 2||x-y||.$$
\end{prop}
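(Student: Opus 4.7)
The plan is to apply \theoref{theorem: 2Pi r + circumference} directly with $n=2$, taking $x_1=x$ and $x_2=y$. Every hypothesis of \theoref{theorem: 2Pi r + circumference} is present in the statement of the proposition: $p\in\conv(C)$, $x,y\in K$, and $\dist(x,C),\dist(y,C)\ge r$. The ``convex polygon'' $P$ with vertex set $\{x,y\}$ degenerates to the segment $\overline{xy}\subseteq K$; interpreting its circumference as the length of its boundary traversed once, we have $\circumference(P)=2\|x-y\|$. Plugging this into the inequality from \theoref{theorem: 2Pi r + circumference} gives $\len(C)\ge 2\pi r+2\|x-y\|$ immediately.

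The only step that warrants comment is the degenerate reading of ``convex polygon.'' The cleanest justification is that the $n=2$, $r=2$, $\mathcal{C}^1$ case of \theoref{theorem: 2Pi r + circumference} is exactly the inequality Coward and Hass prove in \cite{TopAndPhysLinkTheoryDist}, and their proof handles the segment $\overline{xy}$ directly rather than as a limit of non-degenerate triangles; the extension to arbitrary $r\ge 0$ and rectifiable $C$ is already absorbed into the statement of \theoref{theorem: 2Pi r + circumference} by the remark preceding the proposition. Hence the present statement is simply the $n=2$ specialization, with no separate argument required.

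The main obstacle, if one instead tried to derive the $n=2$ statement as a limit of the $n\ge 3$ statement, would be producing a third vertex $z\in K$ satisfying $\dist(z,C)\ge r$: points on or near $\overline{xy}$ can lie arbitrarily close to $C$ even when the endpoints $x,y$ do not, so the required distance condition is not automatic under perturbation. This is why the direct reading via the degenerate polygon is preferable.
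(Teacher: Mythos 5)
Your proposal matches the paper's own treatment: the paper states \theoref{theorem: 2Pi r + circumference}, remarks that Coward and Hass prove the $\mathcal{C}^1$, $n=2$, $r=2$ case and that their argument generalizes, and then simply introduces this proposition with the words ``For $n=2$, we get the following version of this theorem.'' You apply the same specialization, and your remark that the $n=2$ case is in fact the one Coward and Hass prove directly (so the degenerate ``polygon'' $\overline{xy}$ needs no limiting argument) is a careful and correct reading of the same passage.
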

We will use this result for $r=1$ in \propref{proposition: main inequality without epsilons} and for $r=1-\varepsilon$ in \propref{proposition: main inequality with epsilons.}, for small $\varepsilon$. 

\begin{prop}\label{proposition: main inequality without epsilons}
Let $C$ be a closed curve, $p$ a point in the convex hull of $C$, and $K$ the $p$-cone of $C$. Let $A\subseteq K$ be such that $ \dist(A,C)\geq 1$. Then $$\len(C) \geq 2\pi  + 2\diam(A).$$

\end{prop}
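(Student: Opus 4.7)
The plan is to derive this as an immediate corollary of \propref{proposition: two-point arbitrary radius inequality} applied with $r=1$. The hypothesis $\dist(A,C) \geq 1$ means that every point $a \in A$ satisfies $\dist(a,C)\geq 1$, and since $A\subseteq K$, any two points $x,y \in A$ are points of $K$ at distance at least $1$ from $C$. Hence \propref{proposition: two-point arbitrary radius inequality} gives
\[
\len(C) \geq 2\pi + 2\|x-y\|
\]
for every $x,y \in A$.

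Taking the supremum over all pairs $(x,y) \in A \times A$ on the right-hand side yields $\len(C) \geq 2\pi + 2\diam(A)$, as desired. If $\diam(A) = \infty$, the same argument applied to pairs with $\|x-y\|$ arbitrarily large forces $\len(C) = \infty$, so the inequality holds trivially.

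There is no real obstacle: the entire content is already packaged in the preceding two-point inequality, and passing from two points to the diameter of a set is just a supremum. The only thing to watch is that $A$ need not be closed or bounded, so I would phrase the final step as taking the supremum rather than picking a maximizing pair.
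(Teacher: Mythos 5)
Your proof is correct and is exactly the intended argument: the paper states this proposition without proof immediately after \propref{proposition: two-point arbitrary radius inequality}, treating it as the evident corollary you describe (apply the two-point inequality with $r=1$ to pairs in $A$ and take the supremum). Nothing more was needed.
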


We can now use these results to give some lower bounds for the lengths of curves in a thick link that are linked with a certain number of other curves. We already know from the original Gehring problem that if $(C_1,C_2)$ is a thick link with $C_1$ and $C_2$ topologically linked, then $\len(C_1)\geq 2\pi$. It is natural to ask about the general case: If $(C_1,C_2,\dots,C_{n+1})$ is a thick link with $C_1$ topologically linked with each of $C_2,\dots,C_{n+1}$, what is the minimal length that $C_1$ can have? This question was answered by Cantarella, Kusner, and Sullivan in \cite{MinimumRopelengthOfKnotsAndLinks}. Their formulation was originally only for the case of thickly embedded links, but it holds just as well for thick links. 
\begin{thm}\label{theorem: lower bound for ropelength of thick links}
Let $(C_1,C_2,\dots,C_{n+1})$ be a thick link such that $C_1$ is topologically linked with each of $C_2,\dots,C_{n+1}$. Then
$\len(C_1) \geq 2\pi + Q_n$, where $Q_n$ is the length of the shortest closed curve in $\R^2$ surrounding $n$ points of at least unit distance to each other. Here $Q_1=0$, $Q_2 =2$, $Q_3=3$, $Q_4=4$. Asymptotically, $Q_n \sim \sqrt{n}$ \cite{MinimumRopelengthOfKnotsAndLinks}.
\end{thm}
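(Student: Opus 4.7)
The plan is to reduce the statement to the generalized Gehring inequality of \theoref{theorem: 2Pi r + circumference} applied to a carefully chosen collection of points on a spanning disk of $C_1$, essentially following Cantarella, Kusner, and Sullivan \cite{MinimumRopelengthOfKnotsAndLinks}; the only adaptation for thick (as opposed to thickly embedded) links is to verify that the underlying inequalities depend only on pairwise component distance.

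I would begin by fixing a point $p \in \conv(C_1)$ and taking its $p$-cone $K$. Because each $C_i$ with $2 \leq i \leq n+1$ is topologically linked to $C_1$, its algebraic intersection number with the disk $K$ is $\pm \mathrm{lk}(C_1, C_i) \neq 0$, so $C_i \cap K \neq \emptyset$. Choose $x_i \in C_i \cap K$ for each such $i$. The thickness hypothesis immediately yields $\dist(x_i, C_1) \geq \dist(C_i, C_1) \geq 1$ and $\|x_i - x_j\| \geq \dist(C_i, C_j) \geq 1$ for $i \neq j$, so $\{x_2, \ldots, x_{n+1}\}$ is a configuration of $n$ points in $K$ at pairwise distance $\geq 1$, each at distance $\geq 1$ from $C_1$.

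Next I would apply \theoref{theorem: 2Pi r + circumference} with $r = 1$ to a convex polygon $P$ whose vertices are the extreme points of $\{x_2, \ldots, x_{n+1}\}$, obtaining $\len(C_1) \geq 2\pi + \circumference(P)$. By definition of $Q_n$, any closed curve surrounding $n$ points of pairwise distance $\geq 1$ in $\R^2$ has length at least $Q_n$; the perimeter of the convex hull of such an $n$-point set is one such curve, so $\circumference(P) \geq Q_n$ and the bound $\len(C_1) \geq 2\pi + Q_n$ follows.

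The main obstacle is verifying the hypotheses of \theoref{theorem: 2Pi r + circumference}, which strictly speaking wants a planar convex polygon $P \subseteq K$, whereas the $x_i$'s produced above live in $\R^3$ and are not generally coplanar, nor need their convex hull sit in $K$. To get around this one can project the $x_i$'s orthogonally onto a suitably chosen plane (verifying that the projected perimeter still dominates $Q_n$) or, more robustly, run a Crofton-style integral argument in the spirit of Ortel's proof of \theoref{theorem: Gehring Problem} directly on the configuration of $x_i$'s, bypassing coplanarity entirely. Once such a planar reduction is in place, the passage from thickly embedded to thick links is automatic, since the whole chain of inequalities appeals only to the inter-component distance bound $\dist(C_i, C_j) \geq 1$ and never to the disk-bundle embeddedness condition.
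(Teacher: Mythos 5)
The paper offers no proof of this theorem; it attributes the result to Cantarella--Kusner--Sullivan \cite{MinimumRopelengthOfKnotsAndLinks} and remarks only that their argument for thickly embedded links carries over to thick links because it depends solely on inter-component distances. So the comparison must be to the CKS argument rather than to anything in this paper, and your observation that only inter-component distances enter is correct and is exactly the paper's point. The real issue is the obstacle you flag at the end, which neither of your patches closes. Orthogonal projection onto a plane can only shrink the pairwise distances $||x_i - x_j||$, and no single direction is orthogonal to all the difference vectors unless the $x_i$ were already coplanar; so the projected configuration may fail to have pairwise distance at least $1$, its convex-hull perimeter need not dominate $Q_n$, and the projected distance from $x_i$ to $C_1$ can also drop below $1$. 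The Crofton suggestion is not carried out, and it is unclear how averaging over projections would recover the sharp $2\pi + Q_n$ form.

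What is missing is the ingredient that makes \theoref{theorem: 2Pi r + circumference} usable in the first place: the hypothesis $P \subseteq K$ is meant to be read with $K$ carrying its intrinsic flat cone metric, so that $P$ is a geodesic polygon on the unrolled cone, not a Euclidean polygon in $\R^3$. In the CKS argument the cone apex is chosen on $C_1$ itself; the resulting cone is a developable surface with cone angle at most $2\pi$, and developing it into the plane is a distance-nondecreasing map. Under that development the $x_i$ become planar points at pairwise distance at least $1$ and at distance at least $1$ from the unrolled, length-preserved image of $C_1$, and the planar bound then applies directly. Your sketch never invokes the intrinsic flat geometry of the cone, so the planar reduction is not justified as written.
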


If $L=(C_1,\dots,C_n)$ is a thick link such that each curve $C_i$ of $L$ is topologically linked with $k_i$ curves, then $\len(L) \geq 2n\pi + \sum_{i=1}^n Q_{k_i}$.  If $\len(L)=2n\pi + \sum_{i=1}^n Q_{k_i}$, then $L$ is a global minimum for ropelength in its link homotopy class. To the author's knowledge, these are the only configurations that have been proved to be global minimizers. Some examples and graphics of such global minimizers can be found in \cite{CriticalityForGehringProblem}.

\begin{prop}\label{proposition: main inequality with epsilons.}
Let $C$ be a closed curve, $p$ a point in the convex hull of $C$, $K$ the $p$-cone of $C$. Let $0\leq \varepsilon \leq 1$ and let $x,y \in \R^3$ be such that $\dist(x,C),\dist(y,C) \geq 1$ and $\dist(x,K),\dist(y,K)\leq \varepsilon$. Then $$\len(C) \geq 2\pi  + 2||x-y|| - (2\pi+4)\varepsilon.$$
\end{prop}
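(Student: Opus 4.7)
The plan is to reduce to \propref{proposition: two-point arbitrary radius inequality} by replacing $x,y$ with their closest-point projections onto $K$ and absorbing the errors into the constant in front of $\varepsilon$.

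First, I would observe that $K$ is the image of the compact set $S^1\times[0,1]$ under a continuous map, hence compact. Therefore the infima $\dist(x,K)$ and $\dist(y,K)$ are attained, and I can choose $x',y'\in K$ with $\|x-x'\|\leq \varepsilon$ and $\|y-y'\|\leq \varepsilon$. By the reverse triangle inequality,
\[
\dist(x',C)\geq \dist(x,C)-\|x-x'\|\geq 1-\varepsilon,
\]
and similarly $\dist(y',C)\geq 1-\varepsilon$. So $x',y'$ are points in $K$ that sit at distance at least $r:=1-\varepsilon$ from $C$.

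Next, I would apply \propref{proposition: two-point arbitrary radius inequality} with this value of $r$ to the pair $(x',y')$, yielding
\[
\len(C)\geq 2\pi(1-\varepsilon)+2\|x'-y'\|.
\]
Using the triangle inequality in the form $\|x'-y'\|\geq \|x-y\|-\|x-x'\|-\|y-y'\|\geq \|x-y\|-2\varepsilon$, this becomes
\[
\len(C)\geq 2\pi - 2\pi\varepsilon + 2\|x-y\|-4\varepsilon \;=\; 2\pi+2\|x-y\|-(2\pi+4)\varepsilon,
\]
which is precisely the claimed inequality.

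There is no real obstacle here; the only subtlety is confirming that the ``arbitrary radius'' version of the cone inequality may be applied for $r=1-\varepsilon$ with the cone $K$ unchanged (the radius constraint concerns only the distance from the two chosen points $x',y'$ to $C$, not to $K$), which is immediate from the statement of \propref{proposition: two-point arbitrary radius inequality}. The hypothesis $\varepsilon\leq 1$ ensures $r\geq 0$, which is needed to invoke that proposition.
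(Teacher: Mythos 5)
Your proof is correct and follows exactly the same route as the paper's: pick nearest points $x',y'\in K$, observe $\dist(x',C),\dist(y',C)\geq 1-\varepsilon$, apply \propref{proposition: two-point arbitrary radius inequality} with $r=1-\varepsilon$, and finish with the triangle inequality $\|x'-y'\|\geq\|x-y\|-2\varepsilon$. Your added remarks on compactness of $K$ and the role of $\varepsilon\leq 1$ are fine housekeeping but do not change the argument.
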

\begin{proof}
Let $x',y'\in K$ with $||x'-x||,||y'-y||\leq \varepsilon$. Then $\dist(x',C),\dist(y',C)\geq 1-\varepsilon.$ Hence by \propref{proposition: two-point arbitrary radius inequality}
\begin{align*}
\len(C) \geq &2\pi(1-\varepsilon) + 2||x'-y'||\\
\geq &2\pi(1-\varepsilon) + 2||x-y||-4\varepsilon\\
= &2\pi + 2||x-y|| -(2\pi+4)\varepsilon.
\end{align*}
\end{proof}
There also exist various ways to bounds the length of thickly embedded knots based on their topology, see for example \cite{QuadrisecantsRopelength,LowerBoundsThickKnots,RopelengthAlmostLinear}.

\section{An isotopically Gordian split link exists}\label{An isotopically Gordian split link exists}
The construction of our Gordian split link, as well as the basic structure of the proof, has a very similar flavor to that of Coward and Hass \cite{TopAndPhysLinkTheoryDist}. Both approaches derive a lower bound for the length needed to split a thickly embedded link by assuming that there is a thick homotopy/physical isotopy, tracking the intersections of a center-of-mass spanning disk and a curve throughout that homotopy/isotopy, and finally arguing that there needs to be some point in time where the intersections yield some lower bound through applications of propositions from \sectref{bounds on ropelength}. The main difference is in the book-keeping of how intersections are tracked. Coward and Hass need to resolve multiple kinds of singularities that might occur during the tracking of intersections. Our arguments use the homotopy invariance of the Brouwer degree and the mountain climbing theorem instead. We start out by stating the main proposition of this chapter, which establishes a lower bound for the length necessary to split some thick links.

\subsection{A lower bound on ropelength needed to split some thick links}\label{subsection: A lower bound on ropelength needed to split some thick links}

\begin{prop}\label{proposition: lower bound on splitting four component link}
Consider a thick link $L$ consisting of four curves: a main curve $M$ linked with two auxiliary curves $A^1$ and $A^2$, and a fourth curve $C$ which we will call the center curve. Let $D$ be the center-of-mass spanning disk of $C$. Assume the following:
\begin{itemize}
  \item  $D$ and $M$ are $\mathcal{C}^1$ and they intersect in exactly two points, i.e.\ there are exactly two pairs of elements $(a,v),(b,w) \in S^1 \times D^1$ such that $M(a)=D(v), M(b)=D(w)$,
  \item these intersections are transverse,
  \item and $a\neq b$.
\end{itemize}
We get that $a$ and $b$ partition $S^1$ into two parts, say $I^1$ and $I^2$. Let $M^1$ be the concatenation of $M|_{I^1}$ with the straight line segment connecting $M(a)$ and $M(b)$ and let $M^2$ be the concatenation of $M|_{I^2}$ and the straight line segment connecting $M(a)$ and $M(b)$. Further assume: 
\begin{itemize}
\item $M^1$ is linked with $A^1$,
\item $M^2$ is linked with $A^2$.
\end{itemize}
Then, if $H_t$ is a homotopy of $L$ preserving thickness throughout, and such that $\conv(C_1)$ and $\conv(M_1)$  are disjoint, we have $\sup_{t\in[0,1]} \left(\len(C_t)\right) \geq 2\pi +4$.
\end{prop}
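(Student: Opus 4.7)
\begin{proofsketch}
The plan is to argue by contradiction. Suppose that $\sup_{t \in [0,1]} \len(C_t) < 2\pi + 4$. Since any intersection point of $M_t$ with $D_t$ lies on $M_t$ and hence at distance at least $1$ from $C_t$ by thickness, \propref{proposition: main inequality without epsilons} applied to $C_t$ forces any two points of $M_t \cap D_t$ to be at distance strictly less than $2$ at every time $t$.

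After a small generic perturbation of $H_t$, I would take as my central object the incidence $1$-manifold
\[
\mathcal{I} := \{(t,s,y) \in [0,1] \times S^1 \times D : M_t(s) = D_t(y)\}.
\]
Thickness $\dist(M_t,C_t) \geq 1$ prevents $\mathcal{I}$ from meeting the image of $\partial D$, and the hypothesis $\conv(C_1) \cap \conv(M_1) = \emptyset$ gives $\mathcal{I} \cap \{t=1\} = \emptyset$. The boundary of $\mathcal{I}$ at $t=0$ consists of exactly the two preimages of $M(a)$ and $M(b)$, so the classification of compact $1$-manifolds produces a unique arc component $\gamma$ of $\mathcal{I}$ joining these two boundary points, plus possibly some disjoint closed loops.

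Parametrizing $\gamma$ by $u \in [0,1]$ with coordinates $(t(u),s(u),y(u))$, I get a continuous path $P(u) := M_{t(u)}(s(u))$ from $M(a)$ to $M(b)$. Since the algebraic intersection number $\mathrm{lk}(M_t,C_t)$ is preserved under link homotopy (the Brouwer degree input) and vanishes at $t=1$, the two initial intersections must have opposite signs, and at each regular time intersections pair up into oppositely-signed partners. Applying the mountain climbing theorem to the (non-monotone) time function $t(u)$, I would find two parameters $u_1 \neq u_2$ with $t(u_1) = t(u_2) =: t^*$ such that the pair $\{P(u_1),P(u_2)\}$ serves as a distinguished pair of cut points partitioning $M_{t^*}$ into two arcs that deform continuously from the original $M^1,M^2$. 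Because $\mathrm{lk}(M^i_{t^*},A^i_{t^*}) \neq 0$ by link-homotopy invariance, a small-diameter cut set would force one of the $M^i_{t^*}$ to be a near-point loop and hence unlinked from $A^i_{t^*}$, a contradiction; this is what forces $\|P(u_1) - P(u_2)\| \geq 2$, and by \propref{proposition: main inequality without epsilons} this contradicts the standing assumption. Once $\sup_t \len(C_t) \geq 2\pi + 4$ is established, the displayed length bound on $\sup_t \len(H_t)$ follows immediately by adding the given per-component bounds from \theoref{theorem: Gehring Problem} and \theoref{theorem: lower bound for ropelength of thick links}.

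The main obstacle is the third step: tracking pairs of intersection points consistently across tangency (pair-birth and pair-death) events along $\gamma$ and identifying, among the many possible pairings at each regular time, the one for which the induced cut arcs deform continuously to $M^1,M^2$, so that the linking invariants $\mathrm{lk}(M^i_t,A^i_t)$ can actually be applied. This is exactly where the homotopy invariance of the Brouwer degree (to pair intersections by sign) and the mountain climbing theorem (to synchronize two parameters on $\gamma$ at a common time) do the work, replacing the case-by-case singularity bookkeeping of Coward and Hass.
\end{proofsketch}
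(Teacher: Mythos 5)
Your overall strategy — track a pair of cut points on $M_t$ that both lie in the evolving spanning disk $D_t$, arrange (via the mountain climbing theorem) that the two points eventually merge, apply an intermediate-value argument to produce a time at which the chord through the two cut points crosses an auxiliary curve, and then invoke Proposition~\ref{proposition: main inequality without epsilons} — is indeed the skeleton of the paper's argument. However, there are two substantive gaps and a genuine difference in technical route.

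The first gap is the ``small generic perturbation of $H_t$'' used to make the incidence set $\mathcal{I}$ a $1$-manifold. This is not harmless here, because the hypothesis that thickness is preserved throughout the homotopy is exactly what makes the length inequality applicable; once you perturb $H_t$, thickness degrades to $1-\varepsilon$ and the conclusion degrades to $\len(C_t)\geq 2\pi+4-O(\varepsilon)$. That can be repaired by a limiting argument, but you would also need the perturbed curve $t(u)$ to be \emph{piecewise monotone} before the mountain climbing theorem applies, which requires yet another perturbation and an argument that the relevant boundary data at $t=0$ (the two initial transverse intersections) are unchanged. The paper deliberately avoids all of this by never assuming $\mathcal{I}$ is a manifold: instead it works with a closed neighborhood $Z$ of the zero set of $\dist(M_t(x),D_t)$, proves a ``hex on a cylinder'' connectivity statement via a Brouwer-fixed-point disk separation lemma, and tracks $\varepsilon$-approximate intersection points rather than exact ones. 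This is a genuinely different route: yours is differential-topological, the paper's is purely topological, and the paper's choice is what lets it sidestep the transversality-versus-thickness tension cleanly.

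The second gap is in the final contradiction. You write that $\mathrm{lk}(M^i_{t^*},A^i_{t^*})\neq 0$ ``by link-homotopy invariance'' and that a small-diameter cut set would force a near-point loop. This is not right as stated: $M^i_t$ is the concatenation of an arc of $M_t$ with a chord through the two cut points, and that chord is \emph{free} to cross $A^i_t$, so $\mathrm{lk}(M^i_t,A^i_t)$ is not invariant along the family. The correct argument, and the one the paper gives, is the opposite: since the cut points merge at the end of the tracked family, one of $M^1_T$ or $M^2_T$ degenerates to a point and is therefore unlinked from the corresponding $A^i_T$; because it started out linked, the linking number must change at some intermediate time, and the only place a crossing can occur is on the chord (since $M_t$ and $A^i_t$ keep distance $\geq 1$). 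At that time the chord has an interior point on $A^i_t$ at distance $\geq 1$ from each endpoint, giving chord length $\geq 2$, and \emph{then} Proposition~\ref{proposition: main inequality without epsilons} (or its $\varepsilon$-version) yields $\len(C_t)\geq 2\pi+4$. Also note that the ``small diameter of the cut set $\Rightarrow$ one arc is near a point'' step is false in general — a chord of small length does not force either arc of $M_t$ to be small — and the paper does not use it; what matters is the coincidence of the cut points at the terminal time, not smallness at an intermediate $t^*$. Replacing your contradiction step with the intermediate-value argument on the crossing of $A^i_t$ by the chord, and either carrying out the limiting argument for the perturbation or replacing the $1$-manifold argument with the paper's connectivity argument, would close the proof.

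Finally, a small point: your parity argument (the only boundary of $\mathcal{I}$ lies over $t=0$, hence the two boundary points are joined by an arc) is fine, but the arc need not be unique, and uniqueness is not needed; one arc suffices.
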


Consequently,
\begin{align*}\sup_{t\in[0,1]}\len(H_t) = &\sup_{t\in [0,1]}\left(\len(M_t) + \len(A^1_t)+\len(A^2_t)+\len(C_t)\right)\\
\geq &\left(2\pi+2\right)+2\pi+2\pi+\sup_{t\in [0,1]}\len(C_t)\\
\geq &8\pi+6,
\end{align*}
where we used that $\len(M_t)\geq 2\pi+2$ and $\len(A^1_t),\len(A^2_t)\geq 2\pi$ by \theoref{theorem: Gehring Problem}  and \theoref{theorem: lower bound for ropelength of thick links}. In particular, if $\len(L) < 8\pi+6$, there can be no thick homotopy splitting $L$.

\begin{figure}[H]
        \centering
        \includegraphics[width=0.5\textwidth]{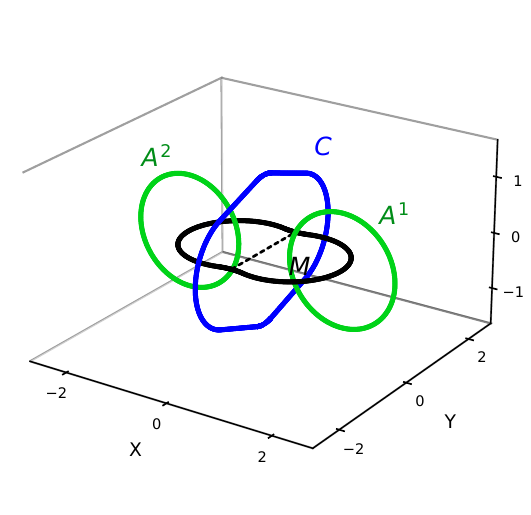} 
        \caption{This plot shows the main curve $M$ in black, the two auxiliary curves $A^1$ and $A^2$ in green and the center curve $C$ in blue. The dotted line joins $M(a)$ and $M(b)$.}
        \label{figure: 4 component Gordian split link with dashed line}
\end{figure}

\begin{proofsketch}[proof of \propref{proposition: lower bound on splitting four component link}]
Assume we have some homotopy $H_t$, $t\in[0,1]$, preserving thickness throughout, which splits the link in \figref{figure: 4 component Gordian split link with dashed line}. As the center curve $C_t$ varies continuously, so does the center-of-mass spanning disk $D_t$ of $C_t$. Imagine there are continuous functions $t\mapsto a_t,b_t\in S^1$ such that $a_0=a$, $b_0=b$, such that $M_t(a_t), M_t(b_t) \in D_t$ and such that for some $T\in [0,1]$ we have $a_T=b_T$. Then $a_t$ and $b_t$ split $S^1$ into intervals $I^1_t$ and $I^2_t$. We can define continuously varying families of curves $M^1_t$ and $M^2_t$ as concatenations of $M_t|_{I^1_t}$ and $M_t|_{I^2_t}$ with the straight line segment from $M_t(a_t)$ to $M_t(b_t)$ respectively. Either $M^1_T$ or $M^2_T$ becomes a single point, while the other one becomes all of $M_T$. Without loss of generality assume $M^1_T$ becomes a single point. Then $(M^1_T,A^1_T)$ becomes unlinked. But $(M^1,A^1)$ was linked, so at some point in time $t\in[0,T]$, $M_t^1$ and $A_t^1$ have to intersect. This intersection cannot be an intersection between $M_t$ and $A^1_t$, so it has to be an intersection of the straight line segment with $A^1_t$. Since the endpoints of that straight line segment are on $M_t$, hence have distance at least $1$ from $A^1_t$, we can conclude that the straight line segment at time $t$ has length at least $2$. This means that $M_t(a_t)$ and $M_t(b_t)$ are at distance at least $2$ from each other, and since they are both in the center-of-mass spanning disk of $C_t$, we get $\len(C_t) \geq 2\pi + 4$ by \propref{proposition: main inequality without epsilons}.
\end{proofsketch}

The main step omitted in this sketch is how, given a homotopy $H$ preserving thickness, we can continuously keep track of the intersection points $a_t$ and $b_t$, and why we can assume that they merge eventually. In this form the claim is wrong. Indeed, new pairs of intersection points may be created during the homotopy, after which one of the original intersection points may annihilate with a newly created one. Thus the original pair cannot in general be tracked continuously while the time parameter moves monotonically from $0$ to $1$; a similar birth-and-death issue appears in \cite{TopAndPhysLinkTheoryDist}. We will circumvent this problem using \lemref{lemma: tracking lemma}. The first tool we need to use for our book-keeping will be the homotopy invariance of the Brouwer degree of a function around $0$. We will therefore briefly recall the following existence and uniqueness theorem defining the degree. This formulation is taken from \textit{Fixed Point Theory} by Granas and Dugundji \cite{FixedPointTheory}.
\begin{thm}[Brouwer degree around $0$]\label{theorem: brouwer degree around a point}
Let $$\mathcal{M}=\{(U,F) \mid U\subseteq \R^n \text{ open, bounded }, F: \overline{U} \to \R^{n} \text{ continuous}, 0\notin F(\partial U)\}.$$
Then there exists a unique function $d: \mathcal{M}\to \Z$ such that
\begin{itemize}  
\item (Normalization) If $0\in U$, then $d(U,\id_{\overline{U}})=1$.
\item (Additivity)  If $ F^{-1}(0)\subseteq U_1\cup U_2 \subseteq U$ with $U_1,U_2$ open and disjoint, then $d(U,F)=d(U_1,F) + d(U_2,F)$.
\item (Homotopy Invariance) If $F_t$ is a homotopy of functions such that for every $t$ we have $0\notin F_t(\partial U)$, then the function $t\mapsto d(U,F_t)$ is constant.
\end{itemize}
\end{thm}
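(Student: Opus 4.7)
The plan is to construct $d$ first for smooth maps with $0$ as a regular value via a signed count of zeros, extend by approximation to all admissible pairs, and then deduce uniqueness by reducing any $(U,F)$ to linear models using the three axioms. This is the standard ``smooth differential topology'' route to Brouwer degree.

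For existence, start with $F\in \mathcal{C}^\infty(\overline{U},\R^n)$ satisfying $0\notin F(\partial U)$ and with $0$ a regular value of $F|_U$, and set
$$d(U,F)\;:=\;\sum_{x\in F^{-1}(0)\cap U}\sgn(\det DF(x)).$$
The preimage is a compact $0$-manifold by the implicit function theorem (compactness uses $0\notin F(\partial U)$), so the sum is finite. To extend to arbitrary smooth admissible $F$, invoke Sard's theorem to pick a regular value $y$ with $\|y\|<\dist(0,F(\partial U))$ and set $d(U,F):=d(U,F-y)$. Independence of $y$ follows from a parametric Sard argument applied to the homotopy $G(x,s):=F(x)-(1-s)y_0-sy_1$: after a generic perturbation, $G^{-1}(0)\subseteq U\times[0,1]$ is a compact oriented $1$-manifold-with-boundary whose signed boundary count equates the two candidate degrees. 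Finally, extend to continuous $F$ by uniform smooth approximation: if $\|F_n-F\|_\infty\to 0$, then for $n$ large the straight-line homotopies between pairs $F_n,F_m$ avoid $0$ on $\partial U$, so the previously defined smooth degree stabilizes. Normalization is immediate from the formula, additivity is built in because the defining sum is local, and homotopy invariance of the continuous extension is inherited from the smooth case via the same straight-line-homotopy trick applied to a finite chain of smooth approximations along the homotopy.

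For uniqueness, let $d'$ be another function satisfying the three axioms. Given $(U,F)\in \mathcal{M}$, pick a smooth $G$ with $0$ a regular value and $\|F-G\|_\infty<\dist(0,F(\partial U))$; the straight-line homotopy from $F$ to $G$ is admissible, so $d'(U,F)=d'(U,G)$. Enumerate $G^{-1}(0)\cap U=\{x_1,\dots,x_k\}$ and choose disjoint small open balls $B_i\ni x_i$ contained in $U$; additivity gives $d'(U,G)=\sum_i d'(B_i,G|_{\overline{B_i}})$. Shrinking $B_i$, the linear homotopy from $G$ to its linearization $L_i(y):=DG(x_i)(y-x_i)$ stays away from $0$ on $\partial B_i$, and any element of $\mathrm{GL}^+(n,\R)$ (resp.\ $\mathrm{GL}^-(n,\R)$) is homotopic through invertible maps to $\id$ (resp.\ to a fixed reflection $R$). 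Normalization yields $d'(B_i,\id)=1$; additivity together with the homotopy $\id\simeq R\circ R$ forces $d'(B_i,R)=-1$. Summing over $i$ reproduces the signed count, so $d'=d$.

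The main obstacle I expect is the parametric Sard step used to justify homotopy invariance of the smooth-regular-value degree: one has to show carefully that for a generic admissible smooth homotopy the zero set in $U\times[0,1]$ is a compact oriented $1$-manifold transverse to the boundary, and that the induced orientation gives the signed boundary identity $d(U,F_0)=d(U,F_1)$. Once this technical point is nailed down, every other step in the construction and every case of the uniqueness argument reduces to approximation and to algebraic manipulation of the three axioms.
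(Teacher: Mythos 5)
The paper does not prove this theorem at all: it is quoted verbatim from Granas and Dugundji's textbook \emph{Fixed Point Theory} and used as a black box, so there is no ``paper proof'' to compare against. Your proposal follows the standard differential-topology construction of the Brouwer degree (signed count at a regular value, extension by Sard and by uniform approximation, uniqueness by reduction to linear models), which is the same route taken in the cited reference and in Milnor, so the outline is appropriate.

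There is, however, a genuine gap in your uniqueness step. The sentence claiming that ``additivity together with the homotopy $\id\simeq R\circ R$ forces $d'(B_i,R)=-1$'' is not a valid argument: $R\circ R$ is \emph{equal} to $\id$, so there is no homotopy to exploit, and the three axioms contain no multiplicativity property that would let you factor the degree through composition. To pin down $d'(B_i,R)=-1$ you need a separate computation: exhibit a map on a suitable domain with exactly two nondegenerate zeros, one with orientation-preserving and one with orientation-reversing differential, that is admissibly homotopic to a map with no zeros; additivity plus homotopy invariance then force the two local degrees to sum to $0$, and combining with normalization on the orientation-preserving side gives $-1$ on the other. This in turn requires first showing from the axioms that $d'$ is unchanged under translating the zero within $U$ (the normalization axiom as stated only applies to $\id_{\overline{U}}$ with $0\in U$, not to a translated identity centered at some $x_i\neq 0$), which is another short but non-trivial deduction you have skipped. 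These are standard lemmas, but as written the uniqueness argument does not close.
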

We further need the following two facts about the Brouwer degree. If $F:U \to \R^n$ is $\mathcal{C}^1$ and if $0$ is a regular value of $F$, then $$d(U,F)=\sum_{x\in F^{-1}(0)} \sgn(\det(DF(x))).$$
If $0\notin F(\partial U)$, then $0\notin F(U) \implies d(U,F)=0.$

\begin{lem}\label{lemma: lemma using the homotopy invariance of the Brouwer degree}
Let $\{L_t:[0,1]\to \R^{n+1}\}_{t\in [0,1]}$ and $\{D_t:D^n \to \R^{n+1}\}_{t\in [0,1]}$ be homotopies of a curve and an $n$-dimensional disk respectively, such that
\begin{itemize}
\item $L_0$ and $D_0$ are $\mathcal{C}^1$ embeddings,
\item $L_0$ intersects $D_0$ exactly once at a point in $L_0((0,1))\cap D_0((D^n)^\circ)$, and this intersection is transverse,
\item $L_1$ and $D_1$ are disjoint.
\end{itemize}
Then there is some time $t\in[0,1]$ such that $L_t(\{0,1\})$ and $D_t(D^n)$ intersect or such that $L_t([0,1])$ and $D_t(\partial D^n)$ intersect.
\end{lem}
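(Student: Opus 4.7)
The plan is to reduce the claim to a Brouwer degree computation. Define the auxiliary map
\begin{equation*}
F_t : [0,1] \times D^n \to \R^{n+1}, \qquad F_t(s,x) = L_t(s) - D_t(x),
\end{equation*}
and let $U = (0,1)\times \interior(D^n)$, an open bounded subset of $\R^{n+1}$ whose closure is $[0,1]\times D^n$ and whose boundary decomposes as $\partial U = (\{0,1\}\times D^n) \cup ([0,1]\times \partial D^n)$. The key observation is that a zero of $F_t$ on $\partial U$ is exactly an intersection of one of the two types in the conclusion: a zero $(s,x)$ with $s\in\{0,1\}$ corresponds to $L_t|_{\{0,1\}}$ meeting $D_t$, while a zero with $x\in \partial D^n$ corresponds to $L_t$ meeting $D_t|_{\partial D^n}$.

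The strategy is then to assume for contradiction that no such $t$ exists, so that $0\notin F_t(\partial U)$ for every $t\in [0,1]$. This makes $(F_t)$ a legitimate homotopy in the sense of \theoref{theorem: brouwer degree around a point}, and so $t\mapsto d(U,F_t)$ must be constant. At $t=1$, disjointness of $L_1$ and $D_1$ gives $F_1^{-1}(0)=\emptyset$, hence $d(U,F_1)=0$. At $t=0$ I would first observe that the unique intersection point $(s_0,x_0)$ of $L_0$ with $D_0$ must lie in $U$ rather than on $\partial U$, since otherwise the desired conclusion already holds at $t=0$ and we are done. Since $L_0$ and $D_0$ are $\mathcal{C}^1$, so is $F_0$, and transversality of the intersection is equivalent to saying that $L_0'(s_0)$ together with the image of $DD_0(x_0)$ spans $\R^{n+1}$, which is exactly the statement that the $(n+1)\times(n+1)$ Jacobian $DF_0(s_0,x_0)$ is invertible. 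Hence $0$ is a regular value of $F_0$ with a single preimage, so
\begin{equation*}
d(U,F_0) = \sgn\bigl(\det DF_0(s_0,x_0)\bigr) = \pm 1,
\end{equation*}
contradicting constancy of the degree.

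The hard part will be mainly bookkeeping: carefully identifying the decomposition of $\partial U$ so that zeros on each stratum correspond to one of the two alternatives in the conclusion, and verifying that transversality of the $L_0$--$D_0$ intersection translates into exactly the regular value condition needed to compute $d(U,F_0)$ by the signed-preimage formula. Beyond these checks, the argument is a direct application of the normalization, additivity, and homotopy invariance axioms of \theoref{theorem: brouwer degree around a point}, together with the standard fact recalled after it that the degree of a map missing $0$ entirely is $0$.
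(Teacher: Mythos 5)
Your proposal is correct and follows essentially the same strategy as the paper: form the map $F_t$ encoding intersections of $L_t$ and $D_t$ on the $(n+1)$-cube, compute $d(\cdot,F_0)=\pm 1$ from the single transverse intersection, compute $d(\cdot,F_1)=0$ from disjointness, and conclude from homotopy invariance that some $F_t$ must vanish on the boundary. The only differences are cosmetic (opposite sign convention for $F_t$, and you spell out the boundary decomposition and the case where the initial intersection point already lies on $\partial U$, details the paper leaves implicit).
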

\begin{proof}
Let $U=(0,1)\times(D^n)^\circ$ and define
$$F_t:\overline{U}\to\R^{n+1},\quad F_t(x,y)=D_t(y)-L_t(x).$$
Since $L_0$ and $D_0$ intersect exactly once in their interiors and since this intersection is transverse, we have $d(U,F_0)=\pm 1$. Since $L_1$ and $D_1$ are disjoint, we have $0\notin F_1(\overline{U})$ and hence $d(U,F_1)=0$. If $0\notin F_t(\partial U)$ for every $t\in[0,1]$, this contradicts the homotopy invariance of the Brouwer degree. Therefore, there is some $t\in[0,1]$ such that $0\in F_t(\partial U)$. Since
$$\partial U=\left(\{0,1\}\times D^n\right)\cup\left([0,1]\times\partial D^n\right),$$
this proves the claim.
\end{proof}

The second tool we will need is the mountain climbing theorem.
\begin{thm}[Mountain Climbing Theorem]\label{theorem: mountain climbing theorem}
Let $\gamma_1,\gamma_2 : [0,1]\to [0,T]$ be continuous such that $\gamma_1(0)=0=\gamma_2(0)$ and $\gamma_1(1)=T=\gamma_2(1)$, and suppose that $\gamma_1$ and $\gamma_2$ are piecewise monotone. Then there exist continuous functions $\gamma'_1, \gamma'_2: [0,1]\to [0,1]$ fixing endpoints such that
$\gamma_1 \circ \gamma'_1 = \gamma_2 \circ \gamma'_2.$
\end{thm}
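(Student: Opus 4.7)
The plan is to reduce everything to the topology of the \emph{matching set}
$$S = \{(s,t) \in [0,1]^2 : \gamma_1(s) = \gamma_2(t)\}.$$
A continuous path $\sigma:[0,1] \to S$ with $\sigma(0)=(0,0)$ and $\sigma(1)=(1,1)$ has coordinate projections that are precisely the desired reparametrizations $(\gamma'_1,\gamma'_2)$, so the theorem is equivalent to showing that $(0,0)$ and $(1,1)$ lie in the same path-component of $S$.

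I would first handle a generic piecewise linear case, then pass to the limit. Approximate $(\gamma_1,\gamma_2)$ uniformly by piecewise linear, piecewise monotone pairs $(\gamma_1^{(n)},\gamma_2^{(n)})$ with the same endpoint values and pairwise distinct extremal heights. For such generic data the matching set $S_n$ is a finite union of line segments: inside each rectangle of the grid indexed by monotone pieces, $S_n$ is either empty or a single monotone diagonal. A direct case check then shows $S_n$ is an embedded graph whose only odd-degree vertices are $(0,0)$ and $(1,1)$ --- the corners $(0,1)$ and $(1,0)$ lie outside $S_n$ because $\gamma_1^{(n)}(0)=0\neq T=\gamma_2^{(n)}(1)$ and symmetrically, while the local degree at every other vertex is two by explicit enumeration (each singular point of $S_n$ sits on a single grid edge where one function has an extremum, producing a V-shaped intersection). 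Tracing the unique edge out of $(0,0)$ then produces an embedded arc $\sigma_n\subseteq S_n$ terminating at $(1,1)$.

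The last step is to pass to the limit, and this is the main obstacle: the arclength of $\sigma_n$ may blow up with $n$, so arclength parametrizations need not be equicontinuous, and naively one does not even get a path-connected subset of $S$ in the limit. I would bypass this by describing the limit set $S$ directly. Because each $\gamma_i$ is piecewise monotone, $S$ is the finite union, over pairs of monotone pieces $(I,J)$ of $\gamma_1,\gamma_2$, of sets of the form $\{(s,t)\in I\times J : \gamma_1(s)=\gamma_2(t)\}$, and each such set is the graph of a continuous monotone function on a subinterval, hence a topological arc. Gluing these finitely many arcs along shared endpoints realizes $S$ as a finite $1$-dimensional CW complex, on which connectedness and path-connectedness coincide. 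Since $\gamma_i^{(n)}\to\gamma_i$ uniformly, any Hausdorff subsequential limit of the arcs $\sigma_n$ is a compact connected subset of $S$ containing both $(0,0)$ and $(1,1)$ (connectedness passes to Hausdorff limits, and $\gamma_1^{(n)}(s_n)=\gamma_2^{(n)}(t_n)$ passes to $\gamma_1(s)=\gamma_2(t)$ by uniform convergence). By the finite CW structure of $S$, these two points therefore lie in a single path-component, producing the continuous $\sigma$ and hence $(\gamma'_1,\gamma'_2)$.
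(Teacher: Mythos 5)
The paper does not actually prove this theorem; it states it and cites Whittaker (1966) and Huneke (1969). Your proposal therefore supplies a proof where the paper gives none, and the route you take --- analyzing the matching set $S=\{(s,t):\gamma_1(s)=\gamma_2(t)\}$, settling a generic piecewise-linear model by a combinatorial argument, then passing to a Hausdorff limit --- is in the spirit of the classical Whittaker argument, so it is fully compatible with the citation.

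Two refinements are worth making. In the generic step you can avoid the degree bookkeeping entirely by strengthening the genericity: also require that interior local extrema of $\gamma_1^{(n)},\gamma_2^{(n)}$ avoid the values $0$ and $T$, and that the two functions' finite sets of breakpoint heights are disjoint. Then $S_n$ meets $\partial([0,1]^2)$ only at $(0,0)$ and $(1,1)$, every V-tip is interior with two local branches, and $S_n$ is a compact $1$-manifold whose boundary is exactly $\{(0,0),(1,1)\}$; the classification of compact $1$-manifolds then hands you the arc from $(0,0)$ to $(1,1)$ directly. More substantively, your claim that $S$ is a finite $1$-dimensional CW complex is false as stated when ``piecewise monotone'' permits flat pieces: if $\gamma_1\equiv v$ on an interval $I$ and $\gamma_2\equiv v$ on an interval $J$, then $I\times J\subseteq S$, so $S$ can contain $2$-cells. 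What your Hausdorff-limit step actually needs is only that $S$ be a finite CW complex (hence locally path-connected), and that still holds: over each pair of monotone pieces $(I,J)$ the matching set is a compact, path-connected ``monotone staircase'' (the nested union of rectangles $\gamma_1^{-1}(v)\times\gamma_2^{-1}(v)$ over $v$ in the common range), and there are finitely many such pieces glued along box boundaries. Either argue it that way, or simply stipulate piecewise \emph{strictly} monotone $\gamma_i$, under which your $1$-dimensional claim is correct. With that repair the final step is sound: a Hausdorff subsequential limit of the arcs $\sigma_n$ is compact, connected, contained in $S$, and contains $(0,0)$ and $(1,1)$; local path-connectedness of $S$ upgrades its connected component to a path component, giving the required path and hence $\gamma_1',\gamma_2'$.
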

This result was first stated and proved by Whittaker in 1966 \cite{Whittaker1966MountainClimbing}. It has been strengthened since then, for example in 1969 by Huneke \cite{Huneke1969MountainClimbing}.

\begin{thm}[Disk separation lemma]\label{theorem: disk separation lemma}
Let $D^2$ be a closed $2$-dimensional disk and let $K,L\subseteq D^2$ be closed such that $D^2=K\cup L$ and such that $K\cap \partial D = I_A \cup I_B$ and $L \cap \partial D = I_X\cup I_Y$ are each the disjoint union of two compact intervals of positive length, with $I_A \cap I_X$, $I_B\cap I_X$, $I_A\cap I_Y$, and $I_B\cap I_Y$ each containing exactly one element. Then either $I_A$ and $I_B$ are in the same connected component of $K$ or $I_X$ and $I_Y$ are in the same connected component of $L$.   
\end{thm}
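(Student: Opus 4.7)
The plan is to argue by contradiction: suppose simultaneously that $I_A$ and $I_B$ lie in different connected components of $K$ and that $I_X$ and $I_Y$ lie in different connected components of $L$. The first step is to upgrade these component-level separations into clopen decompositions. Since $K$ is a closed subset of the compact Hausdorff space $D^2$, in $K$ connected components coincide with quasi-components; hence if the components of $K$ meeting $I_A$ and $I_B$ are distinct, a standard finite-intersection argument produces a decomposition $K = K_A \sqcup K_B$ into subsets clopen in $K$ (and therefore closed in $D^2$) with $I_A \subseteq K_A$ and $I_B \subseteq K_B$. The same reasoning yields $L = L_X \sqcup L_Y$ with $L_X, L_Y$ closed in $D^2$, $I_X \subseteq L_X$, $I_Y \subseteq L_Y$.

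Next I build a continuous map $F = (f_1, f_2) : D^2 \to [0,1]^2$ that detects this hypothetical double separation. Set
$$f_1(p) = \frac{\dist(p, K_A)}{\dist(p, K_A) + \dist(p, K_B)}, \qquad f_2(p) = \frac{\dist(p, L_X)}{\dist(p, L_X) + \dist(p, L_Y)}.$$
Disjointness of the closed sets $K_A, K_B$ (respectively $L_X, L_Y$) keeps the denominators strictly positive, so $F$ is continuous, with $f_1 \equiv 0$ on $K_A$, $f_1 \equiv 1$ on $K_B$, $f_2 \equiv 0$ on $L_X$, $f_2 \equiv 1$ on $L_Y$. On $\partial D^2$ this forces $F(I_A) \subseteq \{0\} \times [0,1]$, $F(I_B) \subseteq \{1\} \times [0,1]$, $F(I_X) \subseteq [0,1] \times \{0\}$, $F(I_Y) \subseteq [0,1] \times \{1\}$, and the four corner points $I_A \cap I_X$, $I_B \cap I_X$, $I_B \cap I_Y$, $I_A \cap I_Y$ get sent to the vertices $(0,0)$, $(1,0)$, $(1,1)$, $(0,1)$ of $[0,1]^2$. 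Thus $F|_{\partial D^2}$ traces the boundary of $[0,1]^2$ exactly once, giving winding number $\pm 1$ around the center $c = (1/2, 1/2)$.

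The contradiction then comes from the no-retraction theorem (equivalently, a Brouwer-degree computation in the spirit of \lemref{lemma: lemma using the homotopy invariance of the Brouwer degree}): since $F$ extends continuously to $D^2$ but $F|_{\partial D^2}$ has nonzero winding number around $c$, there must exist $p \in D^2$ with $F(p) = c$. However, every $p \in D^2 = K \cup L$ satisfies either $p \in K$, forcing $f_1(p) \in \{0, 1\}$, or $p \in L$, forcing $f_2(p) \in \{0, 1\}$, which contradicts $F(p) = (1/2, 1/2)$. The main subtle point I expect is the opening compactness step promoting component-level separation to a genuine clopen-in-$K$ decomposition by closed sets; once $K_A, K_B, L_X, L_Y$ are in hand, constructing $F$ and running the degree argument proceeds along routes already used earlier in the paper.
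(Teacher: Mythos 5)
Your proof is correct and is essentially the same argument as the paper's, which also reduces the double-separation to Brouwer. Two small differences are worth noting. First, you phrase the contradiction in degree/winding-number form (the boundary map wraps once around $(1/2,1/2)$, yet $F$ never hits it), whereas the paper phrases it as a fixed-point-free self-map of $D^2$ (after identifying $D^2\cong[0,1]^2$, define $f=(g_K,g_L)$ via Urysohn so that $f$ sends each of $K_1,K_2,L_1,L_2$ to a disjoint side); these are the same theorem in two standard guises, so nothing substantive is gained or lost. Second, and more usefully, your opening step is more careful than the paper's. The paper sets $K_1$ to be the connected component of $I_A$ in $K$ and $K_2=K\setminus K_1$, and asserts both are closed --- but $K_2$ is closed only if $K_1$ is open in $K$, which is not automatic for a connected component of a compact set. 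You explicitly invoke the quasi-component $=$ component fact in compact Hausdorff spaces and extract a genuine clopen-in-$K$ decomposition $K=K_A\sqcup K_B$ via a finite-intersection argument; that is exactly the justification the paper's step implicitly requires, so your version closes a small gap rather than introducing one.
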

\begin{proof}
Up to homeomorphism we can assume $D^2 = [0,1]^2$ and that $W\coloneqq \{0\}\times [0,1] = I_A$,  $E\coloneqq \{1\}\times [0,1] = I_B$, $S\coloneqq [0,1] \times \{0\}= I_X$,  $N\coloneqq [0,1] \times \{1\}= I_Y$. Toward a contradiction, assume that $W$ and $E$ lie in distinct connected components of $K$ and that $S$ and $N$ lie in distinct connected components of $L$. By Whyburn's lemma \cite{TopologicalAnalysis}, there are disjoint compact sets $K_1,K_2\subseteq K$ such that $K=K_1\cup K_2$, $W\subseteq K_1$, and $E\subseteq K_2$. Similarly, there are disjoint compact sets $L_1,L_2\subseteq L$ such that $L=L_1\cup L_2$, $S\subseteq L_1$, and $N\subseteq L_2$. In particular, $K_1$, $K_2$, $L_1$, and $L_2$ are closed in $D^2$. By Urysohn's Lemma, there are functions $g_K:D^2 \to [0,1]$ and $g_L:D^2 \to [0,1]$ with $g_K(z)=1$ for all $z\in K_1$, $g_K(z)=0$ for all $z\in K_2$ and $g_L(z)=1$ for all $z\in L_1$, $g_L(z)=0$ for all $z\in L_2$. We define the continuous map
$$f:D^2 \to D^2, \quad f(z) = (g_K(z),g_L(z))$$
Note that $z\in K_1 \implies g_K(z)=1$, hence $f$ maps $K_1$ to $E\subseteq K_2$. Similarly $f$ maps $K_2$ to $W\subseteq K_1$ and $L_1$ to $N\subseteq L_2$ and $L_2$ to $S\subseteq L_1$. Any $z\in D^2$ must be in at least one of the sets $K_1$, $K_2$, $L_1$, or $L_2$, and $f(z)$ will be in a set disjoint from it. This means that $f$ cannot have a fixed point, a contradiction to Brouwer's fixed point theorem. We conclude that either $E$ and $W$ are in the same connected component of $K$, or $S$ and $N$ are in the same connected component of $L$. 
\end{proof}

\begin{lem}\label{lemma: hex on a cylinder}
Consider two closed sets $K,L \subseteq S^1 \times [0,1]$ such that both $K$ and $L$ are locally path connected with $K\cup L = S^1 \times [0,1]$, and such that $K\cap (S^{1}\times \{0\}) = I_A \cup I_B$ and $L \cap (S^1 \times \{0\}) = I_X\cup I_Y$ are each the disjoint union of two compact intervals of positive length with $I_A \cap I_X$, $I_B\cap I_X$, $I_A\cap I_Y$, and $I_B\cap I_Y$ each containing exactly one element. Further assume that both $K\cap (S^1 \times \{1\})$ and $L\cap (S^1 \times \{1\})$ are either empty or all of $S^1 \times \{1\}$. Then either $I_A$ and $I_B$ are in the same path component of $K$ or $I_X$ and $I_Y$ are in the same path component of $L$.   
\end{lem}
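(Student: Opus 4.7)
The plan is to reduce to the disk separation lemma (\theoref{theorem: disk separation lemma}) by capping off the top circle of the cylinder with a $2$-disk. Concretely, let $\Delta$ be a closed $2$-disk and form $D' = (S^1 \times [0,1]) \cup_{\partial \Delta \,=\, S^1 \times \{1\}} \Delta$, which is homeomorphic to a closed $2$-disk with boundary $\partial D' = S^1 \times \{0\}$. I would then extend $K$ and $L$ to closed subsets $K', L' \subseteq D'$ by attaching $\Delta$ to whichever side already contains the top circle: set $K' = K \cup \Delta$ if $S^1 \times \{1\} \subseteq K$ and $K' = K$ otherwise, and analogously for $L'$.

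By hypothesis, each of $K \cap (S^1 \times \{1\})$ and $L \cap (S^1 \times \{1\})$ is either empty or all of $S^1 \times \{1\}$, and since $K \cup L$ covers the top circle, at least one of them is all of $S^1 \times \{1\}$. Hence $\Delta \subseteq K' \cup L'$, so $K' \cup L' = D'$; and the intersections with $\partial D' = S^1 \times \{0\}$ are unchanged from the hypotheses of the present lemma. The hypotheses of \theoref{theorem: disk separation lemma} are therefore met, and applying it yields, without loss of generality, a connected component $C'$ of $K'$ containing $I_A \cup I_B$.

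The key technical step is to descend from a component of $K'$ back to a component of $K$. If $K' = K$ there is nothing to do. Otherwise $S^1 \times \{1\} \subseteq K$, and since $S^1 \times \{1\}$ is connected it lies in a single connected component $K_0$ of $K$; every other component of $K$ is disjoint from $\Delta$. Because $K$ is locally path connected its connected components are clopen, and one checks that the connected components of $K' = K \cup \Delta$ are precisely $K_0 \cup \Delta$ together with the remaining connected components $K_j$ of $K$. Since $I_A, I_B \subseteq K$ and $K \cap (K_0 \cup \Delta) = K_0$, in either possibility the component $C'$ meets $K$ in a single connected component of $K$ containing both $I_A$ and $I_B$. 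Local path connectedness of $K$ then upgrades this to a common path component, giving the conclusion; the case where $I_X, I_Y$ lie in a common component of $L'$ is symmetric. The main obstacle beyond invoking the disk separation lemma is ensuring that the cap $\Delta$ does not artificially merge two distinct components of $K$ across the top circle, which is ruled out precisely by connectedness of $S^1 \times \{1\}$.
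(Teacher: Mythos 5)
Your proof is correct, and it pursues the same overall strategy as the paper---turn the cylinder into a disk and invoke the disk separation lemma (\theoref{theorem: disk separation lemma})---but realizes the disk differently: the paper collapses the top circle $S^1 \times \{1\}$ to a single point via a quotient map $q$, whereas you glue in a cap $\Delta$ along $S^1 \times \{1\}$. The descent step then diverges accordingly. The paper argues that $q(K)$ is locally path connected and lifts a path in $q(K)$ back to a path in $K$; you instead analyze the connected components of $K' = K \cup \Delta$ directly and peel off the cap. Both do the same work, but your route sidesteps a sketchy point in the paper's argument: the paper justifies local path connectedness of $q(K)$ by asserting that $q$ is an open map, which is false as stated (collapsing a nondegenerate subspace to a point is essentially never open), even though the conclusion that $q(K)$ is locally path connected is still true for other reasons. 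Your component analysis avoids this altogether. One detail worth spelling out in your ``one checks'' step: to see that the components of $K'$ are precisely $K_0 \cup \Delta$ together with the remaining $K_j$, you should note that $K$, being a closed subset of the compact cylinder and locally connected, has only finitely many components; this makes $\bigcup_{j \neq 0} K_j$ closed, hence $K_0 \cup \Delta$ open in $K'$, so the listed pieces really are clopen and therefore the components.
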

\begin{proof}
Let $q:S^1 \times [0,1] \to (S^1 \times [0,1]) /\sim$ be the map defined by identifying all points in $S^1 \times \{1\}$ with each other. Then $D^2\coloneqq (S^1 \times [0,1])/\sim$ is a disk. We apply \theoref{theorem: disk separation lemma} to $D^2$ and the sets $q(K)$ and $q(L)$ and conclude that either $q(I_A)$ and $q(I_B)$ are in the same connected component of $q(K)$, or $q(I_X)$ and $q(I_Y)$ are in the same connected component of $q(L)$. Without loss of generality assume the former. Since $K$ is compact and $q(K)$ is Hausdorff, $q|_K:K\to q(K)$ is a quotient map. Since quotients of locally path connected spaces are locally path connected, $q(K)$ is locally path connected and therefore $q(I_A)$ and $q(I_B)$ are in the same path component of $q(K)$. Take a path from $q(I_A)$ to $q(I_B)$ in $q(K)$. Since we assumed that $K\cap (S^1 \times \{1\})$ is either empty or all of $S^1 \times \{1\}$, we can always lift this path to a path from $I_A$ to $I_B$ in $K$.
\end{proof}

We are now able to prove the following result.

\begin{lem}[Tracking Lemma]\label{lemma: tracking lemma}
Let $n\geq 1$, and let $M:S^1\to \R^{n+1}$ and $D:D^n \to \R^{n+1}$ be $\mathcal{C}^1$, such that $M$ and $D$ intersect in exactly two points, and such that these intersections are transverse. Let $M_t$ and $D_t$ be homotopies of $M$ and $D$ respectively, so that $M_t$ never intersects $D_t|_{\partial D^n}$ and so that $M_1$ and $D_1$ are disjoint. Let $\varepsilon>0$. Then there is some $T\in [0,1]$, some continuous $\gamma:[0,1]\to [0,T]$ with $\gamma(0)=0$ and $\gamma(1)=T$, and two continuous maps $x_1,x_2:[0,1]\to S^1$ such that
\begin{itemize}
\item $x_1(0)$ and $x_2(0)$ are the points at which $M$ intersects $D$,
\item $x_1(1)=x_2(1)$,
\item $\dist(M_{\gamma(t)}(x_1(t)),D_{\gamma(t)}), \dist(M_{\gamma(t)}(x_2(t)),D_{\gamma(t)}) <\varepsilon$ for all $t\in [0,1]$.
\end{itemize}

\end{lem}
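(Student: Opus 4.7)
The plan is to combine a smooth transversality argument with the Mountain Climbing Theorem (\theoref{theorem: mountain climbing theorem}). Consider the parameter-space map $F: [0,1] \times S^1 \times D^n \to \R^{n+1}$, $F(t, x, y) = D_t(y) - M_t(x)$. If $F$ were smooth with $0$ a regular value, then its zero locus $\Sigma = F^{-1}(0)$ would be a compact $1$-manifold with boundary, and the hypotheses (no intersections at $t = 1$ and no intersections through $\partial D^n$) would force $\partial \Sigma = \{(0, a, y_a), (0, b, y_b)\}$. The classification of compact $1$-manifolds with boundary then guarantees a single arc of $\Sigma$ connecting these two boundary points, since each interval component has exactly two boundary endpoints and two endpoints total forces exactly one arc.

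The first step is to approximate $(M_t, D_t)$ by $\mathcal{C}^\infty$ homotopies $(\tilde M_t, \tilde D_t)$ that agree with $(M_t, D_t)$ at $t = 0$ (so the initial transverse intersections are exactly preserved), differ by at most some $\delta < \varepsilon/2$ in sup-norm (with $\delta$ small enough to preserve the open conditions $\tilde M_t \cap \tilde D_t|_{\partial D^n} = \emptyset$ and $\tilde M_1 \cap \tilde D_1 = \emptyset$, possible by compactness), and satisfy the regular value condition for $\tilde F$ and $\tilde F|_{\partial}$ generically by Sard's theorem. A bump-function interpolation near $t = 0$ handles the exact agreement there. Then $\tilde \Sigma := \tilde F^{-1}(0)$ is a compact smooth $1$-manifold with boundary $\{(0, a, y_a), (0, b, y_b)\}$, so there is an arc $\alpha \subset \tilde \Sigma$ connecting these two points. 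Parametrize $\alpha : [0, 1] \to \tilde \Sigma$ as $\alpha(s) = (t(s), x(s), y(s))$. A further generic perturbation ensures that $t \circ \alpha$ has finitely many critical points and is therefore piecewise monotone.

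With this arc in hand, I apply the Mountain Climbing Theorem. Let $T = \max_s t(s)$, which is positive because $\tilde \Sigma \cap \{t = 0\}$ consists of only the two endpoints of $\alpha$, so $\alpha$ must leave $\{t = 0\}$. Pick $s^* \in (0, 1)$ with $t(s^*) = T$. The two halves of $\alpha$ on either side of $s^*$, reparametrized over $[0, 1]$ so each goes from $0$ to $T$, yield piecewise monotone continuous functions $\gamma_1, \gamma_2 : [0, 1] \to [0, T]$ with common endpoints. Mountain Climbing produces endpoint-preserving $\gamma_1', \gamma_2' : [0, 1] \to [0, 1]$ with $\gamma_1 \circ \gamma_1' = \gamma_2 \circ \gamma_2' =: \gamma$. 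Setting $x_1(s) = x(s^* \gamma_1'(s))$ and $x_2(s) = x(s^* + (1 - s^*)(1 - \gamma_2'(s)))$ gives continuous maps with $x_1(0) = a$, $x_2(0) = b$, and $x_1(1) = x_2(1) = x(s^*)$. At each $s$ the arc equation gives $\tilde M_{\gamma(s)}(x_i(s)) \in \tilde D_{\gamma(s)}(D^n)$, so the triangle inequality together with the $\delta$-closeness yields $\dist(M_{\gamma(s)}(x_i(s)), D_{\gamma(s)}) \leq 2\delta < \varepsilon$.

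The main obstacle is the smooth approximation step: coordinating simultaneously exact agreement at $t = 0$, preservation of all open non-intersection conditions, the regular value property for $\tilde F$, and the finiteness of critical points of $t|_{\tilde \Sigma}$ is standard transversality machinery but requires careful bookkeeping. An alternative route more in the spirit of \lemref{lemma: hex on a cylinder} would apply that lemma to the closed sublevel and superlevel sets of $f(t, x) = \dist(M_t(x), D_t)$ at some small regular value $\varepsilon' < \varepsilon$ of a smoothing of $f$; however, that approach still requires a separate argument---most naturally via the Brouwer degree invariance used in \lemref{lemma: lemma using the homotopy invariance of the Brouwer degree}---to rule out the opposite alternative of the hex lemma, so the transversality approach above seems more direct.
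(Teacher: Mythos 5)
Your approach is correct but takes a genuinely different route from the paper. The paper works directly with the continuous homotopies: it thickens the zero set $S = \{(t,x) : \dist(M_t(x), D_t) = 0\}$ to a carefully chosen set $Z$, applies the ``hex on a cylinder'' dichotomy (\lemref{lemma: hex on a cylinder}, which rests on Brouwer's fixed point theorem) to conclude that either the two initial breaking intervals connect in $Z$, or a path in the complement of $Z$ separates them; the latter case is ruled out via the Brouwer degree argument of \lemref{lemma: lemma using the homotopy invariance of the Brouwer degree}, and the former then yields the tracking curves after Mountain Climbing. You instead regularize: pass to sufficiently smooth approximating homotopies, apply Sard/transversality to turn $\tilde F^{-1}(0)$ into a compact $1$-manifold with boundary, and invoke the classification of compact $1$-manifolds to extract the unique arc joining the two boundary points. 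Both arguments funnel through the Mountain Climbing Theorem at the end, and both need a generic perturbation to get piecewise monotonicity of the time coordinate along the relevant paths. Your version gives a cleaner geometric picture (an honest arc in parameter space and a uniqueness statement that comes for free), at the price of the smooth-approximation bookkeeping; the paper's version avoids regularization entirely but requires the more elaborate $Z$-construction and the degree lemma. One subtlety worth flagging in your write-up: since $M$ and $D$ are only assumed $\mathcal{C}^1$, a genuinely $\mathcal{C}^\infty$ approximation cannot literally agree with $(M_t,D_t)$ at $t=0$, and for Sard's theorem applied to a map from an $(n+2)$-manifold to $\R^{n+1}$ you need at least $\mathcal{C}^2$ regularity, which is more than $M$ and $D$ supply at $t=0$. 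The cleanest repair is not to demand exact agreement: let the initial transverse intersections shift to nearby points $\tilde a, \tilde b$ under a $\mathcal{C}^1$-small smoothing of $M_0, D_0$ (transversality and the two-point count are $\mathcal{C}^1$-open conditions), run your arc argument, and then prepend to $(\gamma, x_1, x_2)$ a short initial segment with $\gamma \equiv 0$ that slides $a \to \tilde a$ and $b \to \tilde b$ along $S^1$; the distance-to-$D$ condition holds there by continuity of $M_0$. Alternatively, keep exact agreement at $t=0$, freeze $\tilde F$ in $t$ on a collar $[0,\eta]$, and apply the parametric transversality theorem only over $t>\eta/2$ where you may take everything $\mathcal{C}^\infty$; regularity over the collar comes from the hypothesized transversality of $M$ and $D$, not from Sard.
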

This lemma tells us that if we have a closed curve and an $n$-dimensional disk in $\R^{n+1}$ that intersect nicely in exactly two spots, and if we have any homotopy splitting the curve and the disk without the curve passing through the boundary of the disk, then we can continuously track points in the parameter space of the curve such that the curve evaluated at these points stays arbitrarily close to the disk, and such that the parameter values eventually merge. This tracking does require us to ``slide the homotopies back and forth through time'', i.e.\ instead of considering $M_t$ and $D_t$, we have to consider $M_{\gamma(t)}$ and $D_{\gamma(t)}$ for some curve $\gamma$.

\begin{proof}
Assume without loss of generality that there is some $c>0$ such that for $t\in [0,c]$ we have $D_t =D$ and $M_t=M$. Let
$$G:[0,1]\times S^1 \to \R, \quad G(t,x) =\dist(M_t(x),D_t),$$
$$S=\{(t,x) \in [0,1]\times S^1 \mid G(t,x)=0\}.$$
Note that $G$ is continuous and hence $S$ is closed. Let $\delta >0$ be such that for all $(t,x)\in [0,1] \times S^1$ with $G(t,x)=0$ we have $(t',x') \in U_{\delta}(t,x) \implies G(t',x')<\varepsilon.$ Let $Z\subseteq [0,1]\times S^1$ be closed and such that
\begin{itemize}
\item $Z$ contains an open neighborhood of $S$
\item $Z$ is contained in the $\delta$-neighborhood of $S$
\item $Z$ and the closure of its complement are locally path connected
\item Whenever there is a path in $Z$ connecting two points in $Z^{\circ}$, there is also a path in $Z^{\circ}$ connecting the same two points
\item Whenever there is a path in the closure of the complement of $Z$ connecting two points in the complement of $Z$, there is a path in the complement of $Z$ connecting the same two points.
\item $\left(\{1\}\times S^1\right) \cap Z =\emptyset$
\item $\left(\{0\}\times S^1\right) \cap Z$ is a disjoint union of two compact intervals $I_A$ and $I_B$ of positive length, each containing one point in $S$.
\end{itemize}
Such a set $Z$ always exists, for example we could cover $S$ with a finite number of closed radius-$\tfrac{\delta}{2}$ balls in a suitable way. Let $I_X$ and $I_Y$ be the disjoint compact intervals such that $I_X^{\circ} \cup I_Y^{\circ} = \left(\{0\}\times S^1\right) \setminus Z$. Let $A$ be the connected component of $Z$ containing $I_A$ and let $B$ be the connected component of $Z$ containing $I_B$. \\
\textbf{Toward a contradiction, assume $\mathbf{A\neq B}$.} By \lemref{lemma: hex on a cylinder}, there is a path $(\alpha,f):[0,1]\to [0,1]\times S^1$  from a point $(0,x) \in I_X^{\circ}$ to a point $(0,y) \in I_Y^{\circ}$ avoiding $Z^{\circ}$. By the properties of $Z$, we can assume that this path avoids $Z$. Since $Z$ is closed, we can perturb the curve $(\alpha,f)$ in the open set $\left([0,1]\times S^1\right) \setminus Z$ so that the curve $\alpha$ is piecewise monotone. Let $T$ be the maximum value of $\alpha$. Without loss of generality assume that $\alpha(\tfrac{1}{2})=T$, else reparametrize $(\alpha,f)$.  We now define $(\alpha_1,f_1),(\alpha_2,f_2):[0,1]\to [0,1]\times S^1$ through $(\alpha_1,f_1)(t)\coloneqq (\alpha,f)(\tfrac{t}{2})$ and $(\alpha_2,f_2)\coloneqq (\alpha,f)(1-\tfrac{t}{2})$. Note that $\alpha_1,\alpha_2:[0,1]\to [0,T]$ agree on endpoints and that both are piecewise monotone.  We may assume that $\gamma\coloneqq \alpha_1 =\alpha_2$, else we appeal to the mountain climbing theorem (\theoref{theorem: mountain climbing theorem}) for the existence of curves $\alpha'_1,\alpha'_2:[0,1]\to[0,1]$, fixing endpoints, with $\gamma\coloneqq \alpha_1\circ \alpha'_1=\alpha_2\circ\alpha'_2$ and then consider $(\gamma,f_1\circ \alpha'_1)$ and $(\gamma,f_2\circ \alpha'_2)$. If $f_1$ and $f_2$ agree before $t=1$, restrict the paths to the first time they agree and reparametrize them. Let $q:\R\to S^1$ be the standard covering. Lift $f_1$ and $f_2$ to paths $\overline{f_1}, \overline{f_2}: [0,1] \to \R$, i.e.\ $q \circ \overline{f_1} = f_1$ and $q\circ \overline{f_2} = f_2$. After interchanging $f_1$ and $f_2$ if necessary, pick the lifts so that $\overline{f_1}(0) < \overline{f_2}(0)<\overline{f_1}(0)+1$ and $\overline{f_1}(1)=\overline{f_2}(1)$. Then define $h:[0,1]^2\to\R$, $h(t,x)=(1-x)\overline{f_1}(t)+ x\overline{f_2}(t)$. For a given $t$, the image of $h(t,\cdot)$ corresponds to the interval between $\overline{f_1}(t)$ and $\overline{f_2}(t)$. Looking at $q(h(t,\cdot)):[0,1]\to S^1$ we get a continuously varying family of curves in $S^1$ with endpoints $f_1(t)$ and $f_2(t)$. Next we define
$$L_t : [0,1] \to \R^{n+1}, \quad L_t(x) = M_{\gamma(t)} ( q(h(t,x))).$$
The curve $L_0$ is simply a reparametrization of a segment of $M$. We further define
$$\tilde{D}_t: D^n \to \R^{n+1}, \quad \tilde{D}_t(y) = D_{\gamma(t)}(y).$$
Note that $L_0$ intersects $\tilde{D}_0=D_0$ exactly once, and that this intersection is transverse. Since $\overline{f_1}(1)=\overline{f_2}(1)$, $L_1$ is the single point $M_{\gamma(1)}(f_1(1))$. Since $(\gamma,f_1)$ avoids $Z\supseteq S$, this point is not in $\tilde{D}_1=D_{\gamma(1)}$. Hence $L_1$ and $\tilde{D}_1$ are disjoint. By \lemref{lemma: lemma using the homotopy invariance of the Brouwer degree}, this is only possible if for some $t\in [0,1]$ we have $L_t(0)=M_{\gamma(t)}(f_1(t))\in \tilde{D}_t$, or $L_t(1)=M_{\gamma(t)}(f_2(t))\in \tilde{D}_t$, or if $L_t$ intersects $\tilde{D}_t(\partial D^n)$. By construction $L_t(0)$ and $L_t(1)$ avoid $\tilde{D}_t$, and we know that $L_t$ and $\tilde{D}_t(\partial D^n)$ are disjoint because we assumed that $M_t$ and $D_t|_{\partial D^n}$ are always disjoint. We have arrived at a contradiction. \textbf{We conclude that our assumption that $\mathbf{A\neq B}$ was wrong and therefore $\mathbf{A=B}$.} By arguments exactly the same as in the proof-by-contradiction, we obtain two paths $(\gamma,x_1), (\gamma,x_2):[0,1]\to Z \subset [0,1]\times S^1$ with $x_1(0)$ and $x_2(0)$ being the two intersections of $M$ and $D$ and with $x_1(1)=x_2(1)$. The fact that $(\gamma,x_1)$ and $(\gamma,x_2)$ have image in $Z$ implies $\dist(M_{\gamma(t)}(x_1(t)),D_{\gamma(t)}), \dist(M_{\gamma(t)}(x_2(t)),D_{\gamma(t)}) <\varepsilon$.

\end{proof}
We now proceed to prove \propref{proposition: lower bound on splitting four component link}

\begin{proof}[Proof of \propref{proposition: lower bound on splitting four component link}]
Let $\varepsilon>0$ and define $\varepsilon' = \tfrac{\varepsilon}{2\pi+4}$. By \lemref{lemma: tracking lemma}, applied with $\varepsilon'$, there is some $T\in [0,1]$, some continuous $\gamma:[0,1]\to [0,T]$ with $\gamma(0)=0$ and $\gamma(1)=T$, and two continuous maps $x_1,x_2:[0,1]\to S^1$ such that
\begin{itemize}
\item $x_1(0)=a$ and $x_2(0)=b$, 
\item $x_1(1)=x_2(1)$,
\item $\dist(M_{\gamma(t)}(x_1(t)),D_{\gamma(t)}), \dist(M_{\gamma(t)}(x_2(t)),D_{\gamma(t)}) <\varepsilon'$ for all $t\in [0,1]$.
\end{itemize}
Assume without loss of generality that $\min\left(\{t\in [0,1] \mid x_1(t)=x_2(t)\}\right)=1$. We define $\left(\tilde{M}_t,\tilde{A}^1_t,\tilde{A}^2_t, \tilde{C}_t\right)=\tilde{H}_t = H_{\gamma(t)}$, and let $\tilde{D}_t$ be the center of mass spanning disk of $\tilde{C}_t$. For any $t\in [0,1]$, $x_1(t)$ and $x_2(t)$ partition $S^1$ into two parts, say $I_t^1$ and $I_t^2$. We define $\tilde{M}_t^1$ as the concatenation of $\tilde{M}_t|_{I^1_t}$ and the straight line segment between $\tilde{M}_t(x_1(t))$ and $\tilde{M}_t(x_2(t))$. Analogously we define $\tilde{M}_t^2$ as the concatenation of $\tilde{M}_t|_{I^2_t}$ and the straight line segment between $\tilde{M}_t(x_1(t))$ and $\tilde{M}_t(x_2(t))$. These are continuously varying families of curves. Either $\tilde{M}^1_1$ or $\tilde{M}^2_1$ becomes a single point. Without loss of generality assume that $\tilde{M}^1_1$ becomes a single point. Since $\tilde{M}^1_0 $ was linked with $\tilde{A}^1_0$, and since $\tilde{M}^1_1$ is a point and hence not linked with $\tilde{A}^1_1$, there must be some $t\in [0,1]$ such that $\tilde{M}^1_t$ and $\tilde{A}^1_t$ intersect. This intersection must happen on the straight line segment between $\tilde{M}_t(x_1(t))$ and $\tilde{M}_t(x_2(t))$, as $\tilde{M}_t$ and $\tilde{A}^1_t$ stay disjoint throughout. Let $P_1:=\tilde{M}_t(x_1(t))$ and $P_2:=\tilde{M}_t(x_2(t))$, then there is some $Q \in \tilde{A}^1_t$ on the line segment $\overline{P_1P_2}$ with $||P_1-Q||,||P_2-Q||\geq 1$. This implies $||P_1-P_2||\geq 2$. Further, we have $\dist(P_1,\tilde{D}_t) <\varepsilon'$ and likewise $\dist(P_2,\tilde{D}_t)<\varepsilon'$. By \propref{proposition: main inequality with epsilons.}, we get
$$\len(C_{\gamma(t)})=\len(\tilde{C}_t) \geq 2\pi  + 2||P_1-P_2|| - (2\pi+4)\varepsilon' \geq 2\pi +4 -\varepsilon.$$
Since $\varepsilon>0$ was arbitrary, we get $\sup_{t\in [0,1]} \len(C_t) \geq 2\pi+4$.

\end{proof}

\subsection{Constructing an isotopically Gordian split link}

Next we exhibit a thickly embedded split link $L$ fulfilling the conditions of \propref{proposition: lower bound on splitting four component link}, which has an initial length of less than $8\pi+6$. This means that $L$ cannot be split through a thick homotopy. A fortiori, $L$ cannot be split through a thick isotopy. So $L$ is a Gordian split link both as a thick and as a thickly embedded link. We construct the link $L=(M,A^1,A^2,C)$ such that the main curve $M$ is in the $XY$-plane, the auxiliary curves $A^1$ and $A^2$ are in the $XZ$-plane, and the center curve is in the $YZ$-plane.  The following plots, as well as numerical calculations of some of the curves' lengths, were created and conducted with Mathematica.

\begin{figure}[H]
  \centering
  \begin{minipage}[b]{0.45\textwidth}
    \centering
    \includegraphics[width=\textwidth]{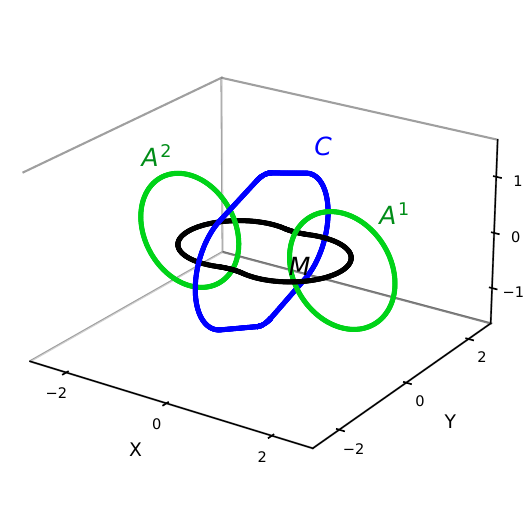}
    \captionof{figure}{A plot of the thickly embedded Gordian split link $L$}
    \label{fig:3d-curve}
  \end{minipage}\hfill
  \begin{minipage}[b]{0.45\textwidth}
    \centering
    \includegraphics[width=\textwidth]{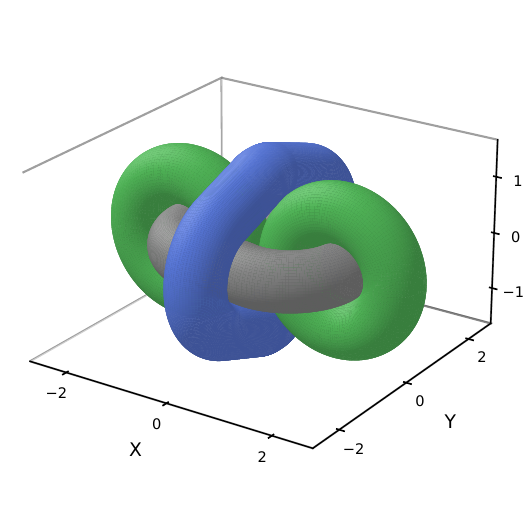}
    \captionof{figure}{Plot of the radius-$\tfrac{1}{2}$ tubes around the link \(L\).}
    \label{fig:3d-tube}
  \end{minipage}
\end{figure}

To see how our Gordian split link is defined, we now look at the three different coordinate planes. In the plots of the coordinate planes, we will always plot the curves, as well as forbidden areas, i.e.\ the $1$-neighborhoods of the curves in other coordinate planes. We will also plot and reference selected points to make following the definition of the curves easier. The points are named $A$ through $X$, so some points share a name with our curves $M$ and $C$. It will always be obvious which is meant, so there should be no confusion. We define the curves in sequence. First we define the auxiliary curves $A^1$ and $A^2$, then the main curve $M$ and finally the center curve $C$. 

\begin{figure}[H]
        \centering
        \includegraphics[width=0.5\textwidth]{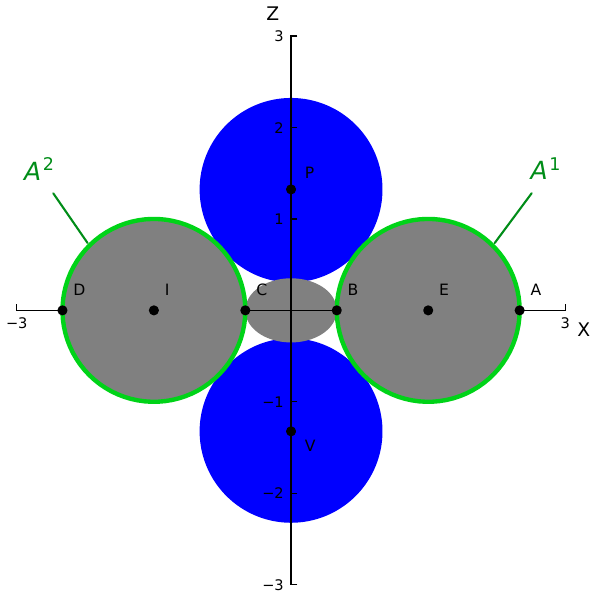} 
        \caption{A plot of the $XZ$-plane. The auxiliary curves $A^1$ and $A^2$ are plotted in green, the forbidden areas belonging to main curve and center curve are plotted in gray and blue respectively.}
        \label{XZ plot}
\end{figure}

\begin{figure}[H]
        \centering
        \includegraphics[width=0.5\textwidth]{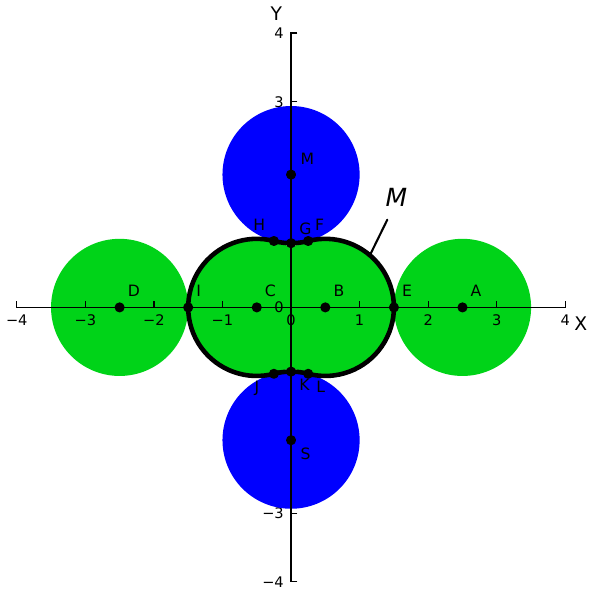} 
        \caption{A plot of the $XY$-plane. The main curve $M$ is plotted in black, the forbidden area belonging to the auxiliary curves $A^1$ and $A^2$ is plotted in green and the forbidden area belonging to center curve $C$ is plotted in blue.}
        \label{XY plot}
\end{figure}

\begin{figure}[H]
        \centering
        \includegraphics[width=0.5\textwidth]{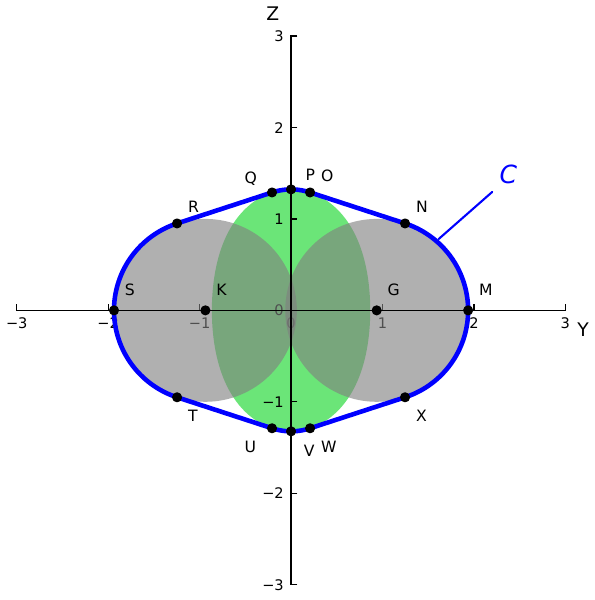} 
        \caption{A plot of the $YZ$-plane. The center curve $C$ is plotted in blue, the forbidden area belonging to the auxiliary curves $A^1$ and $A^2$ is plotted in green and the forbidden area belonging to the main curve $M$ is plotted in gray.}
        \label{YZ plot}
\end{figure}

The two auxiliary curves $A^1$ and $A^2$ are unit circles in the $XZ$-plane with centers $E:=\left(\tfrac{3}{2},0,0\right)$ and $I:=\left(-\tfrac{3}{2},0,0\right)$. The main curve $M$ is defined as follows: In the $XY$-plane, take the unit circles with centers $B:=\left(\tfrac{1}{2},0,0\right)$, $M:=\left(0,\tfrac{\sqrt{15}}{2},0\right)$, $C:= \left(-\tfrac{1}{2},0,0\right)$, and $S:=\left(0,-\tfrac{\sqrt{15}}{2},0\right)$. The circles around $M$ and $B$ touch in exactly one point $F$. Similarly, we can define $H$, $J$, and $L$. Let $G:= \left(0,\tfrac{\sqrt{15}}{2}-1,0\right)$ and $J:=\left(0,-\left(\tfrac{\sqrt{15}}{2}-1\right),0\right)$. The main curve $M$ is defined as the concatenation of four unit circle segments (see \figref{XY plot}). Let $\alpha = \angle FMG$, then $\alpha = \tfrac{\pi}{2}-\arctan(\sqrt{15}) \approx 0.2527$. We get $\len(M) = 2\pi +8\alpha = 6\pi - 8\arctan(\sqrt{15}) \approx 8.3046$. To define the center curve $C$, we parametrize the forbidden green region, which is the intersection of the open solid torus of radius $1$ around either auxiliary curve with the $YZ$-plane. This region, which we will denote by $\mathcal{G}$, is given by $$\left( \left(\tfrac{3}{2}\right)^2 + z^2 + y^2 \right)^2 - 4 \left(\left(\tfrac{3}{2}\right)^2 + z^2\right) < 0,$$ and its boundary $\partial \mathcal{G}$ is a closed curve implicitly defined through $$\left(\left(\tfrac{3}{2}\right)^2 + z^2 + y^2 \right)^2 - 4 \left(\left(\tfrac{3}{2}\right)^2 + z^2\right) = 0.$$
There is a unique straight line segment in the quadrant $y,z\geq 0$ connecting a point on the unit circle around $G$ and a point on the boundary curve $\partial\mathcal{G}$ while being tangent to both the unit circle and $\partial \mathcal{G}$. We call the start of this line segment $N$ and the end of it $O$. Let $P=\left(0,0,\tfrac{\sqrt{7}}{2}\right)$, and recall that $M=\left(0,\tfrac{\sqrt{15}}{2},0\right)$. We define the first quarter of our center curve $C$ as the concatenation of the circle segment from $M$ to $N$, the straight line segment from $N$ to $O$, and the segment of the boundary curve $\partial\mathcal{G}$ from $O$ to $P$. The other three quarters of $C$ are defined by reflecting the first segment from $M$ to $P$ along both the $Y$-axis and the $Z$-axis. Mathematica was not able to solve for the coordinates of $N$ and $O$ exactly, so they were calculated numerically instead. The angle $\beta=\angle MGN$ was calculated as $\beta\approx 1.2554$. The length of the straight line segment $s$ from $N$ to $O$ was calculated as $\len(s)\approx 1.0925$. The length of the segment $b$ from $O$ to $P$ along $\partial \mathcal{G}$ was numerically calculated as $\len(b)\approx 0.2116$. The length of $C$ is therefore $\len(C)=4\left(\beta + \len(s)+\len(b)\right) \approx 10.2380$. Finally, we get
\begingroup
\bfseries\boldmath
\begin{align*}
\len(L) = &2\pi + 2\pi + \left(6 \pi - 8\arctan(\sqrt{15})\right) + 4\left(\beta + \len(s)+\len(b)\right)\\
\approx & 31.1090 < 31.1327 \approx 8\pi +6.
\end{align*}
\endgroup
We have constructed $L$ to be a thick link, but have not yet shown that $L$ is thickly embedded. The open radius-$\tfrac{1}{2}$ normal disk bundles around $A^1$, $A^2$ and $M$ are obviously embedded. To see that the open radius-$\tfrac{1}{2}$ normal disk bundle around $C$ is embedded, we need to check that the curvature of the segment $b$ is bounded above by $2$. This is true. In fact, the curvature of all of $\partial \mathcal{G}$ is bounded above by $2$. We omit a proof as it is not very interesting. 
Up to that detail, we have proven the following.

\begin{thm}[A $4$-component Gordian split link exists]\label{theorem: A Gordian split link exists}
There exist $4$-component thickly embedded links that are Gordian split links as thick links (and hence a fortiori as thickly embedded links).
\end{thm}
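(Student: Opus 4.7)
The plan is to apply Proposition~\ref{proposition: lower bound on splitting four component link} to the explicit four-component thickly embedded link $L = (M, A^1, A^2, C)$ constructed in the preceding subsection. That proposition already provides the lower bound $\sup_t \len(H_t) \geq 8\pi + 6$ for any thick homotopy whose endpoint separates $\conv(C)$ from $\conv(M)$, so the theorem reduces to verifying (i) the proposition's geometric hypotheses, (ii) that $L$ is topologically split, and (iii) the numerical inequality $\len(L) < 8\pi + 6$.

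For (i), by the two reflection symmetries of $C$ in the $Y$- and $Z$-axes its center of mass is the origin, so the center-of-mass spanning disk $D$ of $C$ lies in the $YZ$-plane. Since $M \subset \{z = 0\}$, we have $M \cap D \subseteq M \cap \{x = z = 0\}$, which by the explicit four-arc decomposition of $M$ consists of exactly the two interior points $G$ and $J$ of $D$; at each of them the tangent to $M$ is in the $\pm X$-direction, transverse to the $YZ$-plane. Splitting $S^1$ at the preimages of $G$ and $J$ gives arcs whose closures $M^1, M^2$ meet the linking hypotheses: $M^1$ together with the $Y$-axis segment $\overline{GJ}$ bounds a planar disk in the $XY$-plane that is crossed exactly once by $A^1$ at the point $(1/2, 0, 0)$, so $\mathrm{lk}(M^1, A^1) = \pm 1$, and symmetrically $\mathrm{lk}(M^2, A^2) = \pm 1$.

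For (ii), translating $C$ rigidly in the $+X$ direction defines an explicit link isotopy that separates $C$ from $M \cup A^1 \cup A^2$. Writing $C(t) := C + (t, 0, 0)$ for $t \geq 0$, $C(t)$ meets $\{z = 0\}$ only at the two points $(t, \pm\sqrt{15}/2, 0)$, which miss $M \subset \{z = 0, |y| \leq 1\}$ because $\sqrt{15}/2 > 1$; and it meets $\{y = 0\}$ only at $(t, 0, \pm\sqrt{7}/2)$, which miss $A^1 \cup A^2 \subset \{y = 0, |z| \leq 1\}$ because $\sqrt{7}/2 > 1$. Hence no collision occurs, and for $t$ large $\conv(C(t))$ is disjoint from $\conv(M \cup A^1 \cup A^2)$. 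Thick embeddedness of $L$ was already established in the construction.

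For (iii), the explicit length computation yields
\[
\len(L) = 4\pi + \bigl(6\pi - 8\arctan(\sqrt{15})\bigr) + 4\bigl(\beta + \len(s) + \len(b)\bigr) \approx 31.109 < 31.133 \approx 8\pi + 6,
\]
and Proposition~\ref{proposition: lower bound on splitting four component link} then rules out any thick homotopy of $L$ separating $\conv(C)$ from $\conv(M)$ — which is necessary for any thick homotopy splitting of $L$, since $M$ is classically linked to both $A^1$ and $A^2$ and hence the only admissible topological partition is $\{C\}$ versus $\{M, A^1, A^2\}$. The main obstacle in carrying out this plan rigorously is the numerical inequality: the gap to $8\pi+6$ is only about $0.024$ and the quantities $\beta$, $\len(s)$, $\len(b)$ are computed by Mathematica. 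I would close this gap by certified upper bounds, for instance via interval arithmetic applied to the defining equations of $N$, $O$, and the $\partial\mathcal{G}$-segment $b$, or by over-approximating $b$ with a slightly longer analytic curve whose length admits a closed-form bound.
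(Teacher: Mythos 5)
Your proposal is correct and follows the same route as the paper: apply Proposition~\ref{proposition: lower bound on splitting four component link} to the explicit thickly embedded link $L=(M,A^1,A^2,C)$ of Section~5.2, after checking that $D\cap M=\{G,J\}$ with transverse crossings, that $\mathrm{lk}(M^1,A^1)=\mathrm{lk}(M^2,A^2)=\pm1$, that $L$ is topologically split (translating $C$ in the $+X$ direction), that separating $\conv(C)$ from $\conv(M)$ is the only topologically admissible split, and that $\len(L)<8\pi+6$. You correctly flag the one soft spot shared by the paper itself — the final inequality rests on Mathematica-computed values of $\beta$, $\len(s)$, $\len(b)$ with a margin of only about $0.024$ — and your suggestion of certified interval bounds would genuinely tighten the argument rather than indicating a gap specific to your proposal.
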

The difference between the lower bound we derived in \subsectref{subsection: A lower bound on ropelength needed to split some thick links} and the length of our constructed configuration $L$ is less than $0.08\%$ of $\len(L)$.  While it is unlikely that $L$ can be shortened much, it is highly likely that the lower bound of $8\pi+6$ from \propref{proposition: lower bound on splitting four component link} can be significantly improved.  A careful examination of the proof of \propref{proposition: lower bound on splitting four component link} shows that we never needed to assume that the distances between $A^1$ and $C$ and between $A^2$ and $C$ stay at least $1$ throughout the homotopy. In fact, we have not even used that $A^1$ and $A^2$ stay disjoint from $C$. We only needed that $(A^1,A^2,M)$ and $(M,C)$ stay thick links throughout the homotopy of $L$.

\section{Homotopically Gordian pairs exist in every link homotopy class with \texorpdfstring{$2$}{2} components}\label{Homotopically Gordian pairs exist in every link homotopy class with 2 components}

\subsection{Circle decompositions - the idea}

We first sketch the idea for our construction, as well as the proof, informally. Consider a two-component thick link $L$ made up of unit circle segments as in \figref{figure: 2-component gordian unlink} and parametrized as shown in \figref{figure: parametrization of circle decomposition}.

\begin{figure}[H]
  \centering
  \begin{minipage}[t]{0.45\textwidth}
    \vspace{0pt}%
    \centering
    \includegraphics[width=\textwidth]{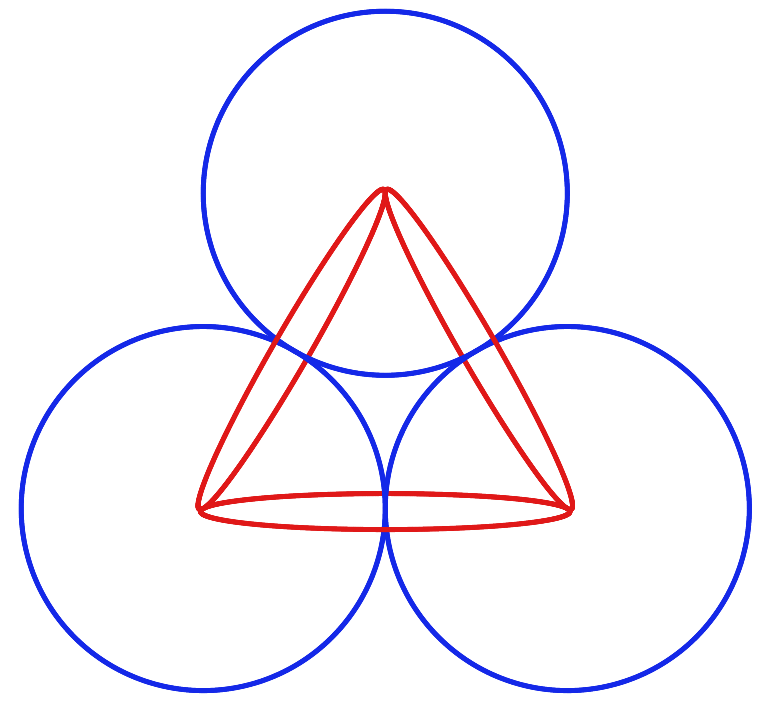}
    \captionof{figure}{A conjoined circle decomposition: The image of the blue curve is the union of three coplanar, pairwise tangent unit circles. The image of the red curve is the union of three pairwise tangent unit circles, each piercing the centers of two blue circles.}
    \label{figure: 2-component gordian unlink}
  \end{minipage}\hfill
  \begin{minipage}[t]{0.45\textwidth}
    \vspace{0pt}%
    \centering
    \includegraphics[width=\textwidth]{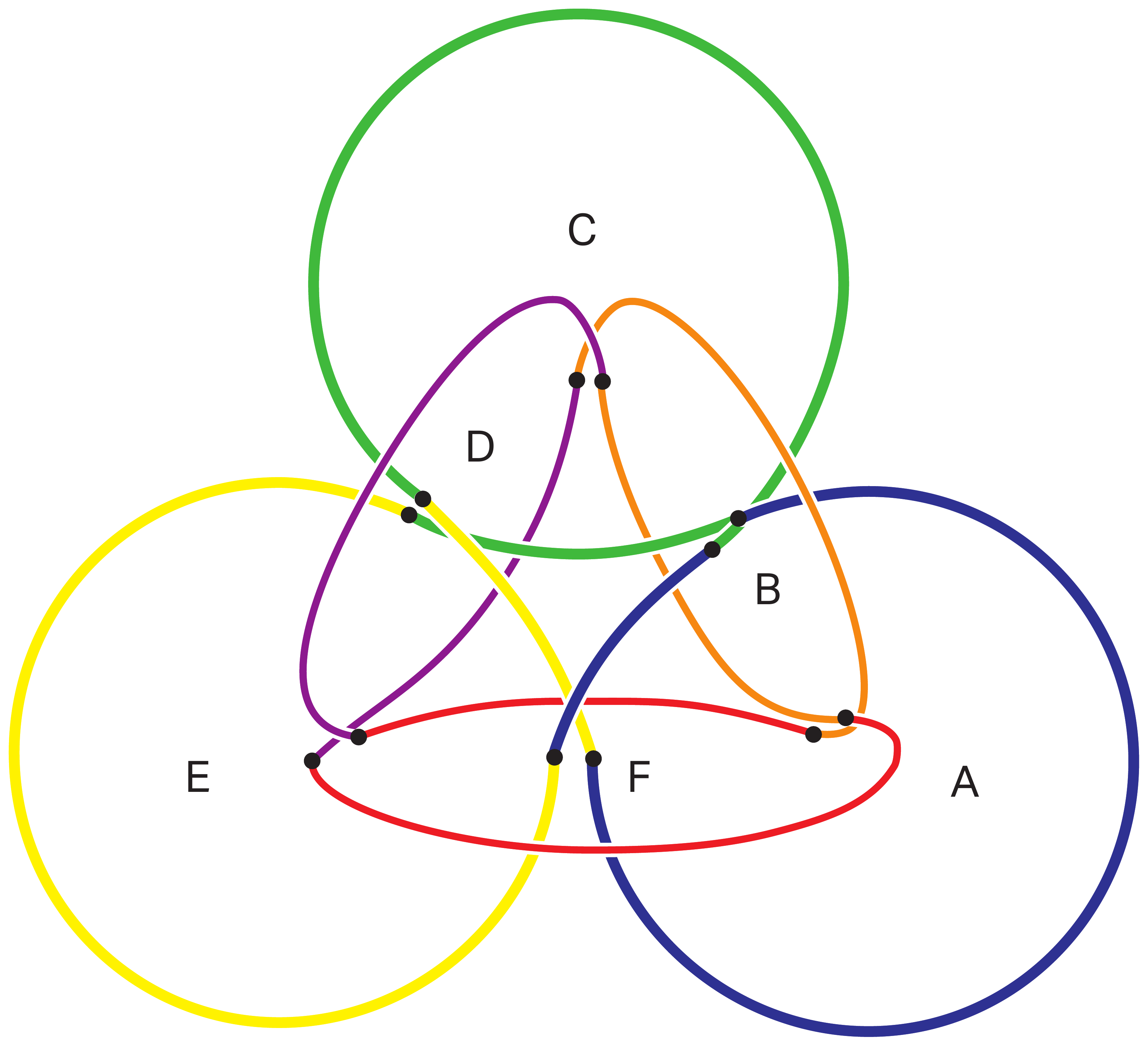}
    \captionof{figure}{A perturbation of a conjoined circle decomposition: This figure shows a thick link $L'$ close to $L$ to help understand how $L$ is parametrized.}
    \label{figure: parametrization of circle decomposition}
  \end{minipage}
\end{figure}

In the original configuration, there are two strands going through each circle's center. After a small perturbation, for every circle, pick one point each on both of the strands through that circle minimizing their respective distance. This gives a set of two points for every circle. These sets are marked $A$, $B$, $C$, $D$, $E$, $F$ in \figref{figure: parametrization of circle decomposition}. We now split up the two link-components into $6$ curves each, along the points making up the sets $A$, $B$, $C$, $D$, $E$, $F$, and pair them up as indicated by the color scheme. Let us calculate the sum $S_{\text{blue}}$ of the lengths of the blue segments. We define the closed curve $C_{\text{blue}}$ by connecting the two blue components by a straight line segment through the $2$ points in $B$ and by a straight line segment through the $2$ points in $F$. Then the two strands piercing the blue circle will pierce the center-of-mass spanning disk of $C_{\text{blue}}$ in (at least) one point each, say in the \textit{breaking points} $p$ and $q$, where $\{p,q\}=A$. We \textbf{hope to have}
$$\len(C_{\text{blue}}) \geq 2\pi + 2||p-q|| = 2\pi + 2\diam(A),$$
$$S_{\text{blue}} = \len(C_{\text{blue}}) - \diam(B) -\diam(F) \geq 2\pi + 2\diam(A) - \diam(B) - \diam(F).$$
Doing the same thing for all other colors and summing the lengths we would then get that all the terms involving diameters of sets $A,B,C,D,E,F$ cancel, so that the only thing remaining is
$$\len(L') \geq 12\pi.$$
This would then show that any thick link sufficiently close to the original configuration is at least as long, so the original configuration seems to be a local minimum. Besides the many missing details, there are two problems with this proof. First, even if our arguments are correct, we only get that the initial configuration is a local minimum, which is not yet sufficient to conclude that it is a sink. Secondly, we do not know that the straight line segment keeps distance at least $1$ to the points $p$ and $q$ intersecting the spanning disk of $C_{\text{blue}}$. But this was a condition implicitly used when we claimed $\len(C_{\text{blue}}) \geq 2\pi + 2||p-q||$. We will see that we can make adjustments to the argument to get
\begin{align*}
\len(L') \geq 12\pi &+2\diam(A) - 2 \varphi(\diam(A))\\
&+2\diam(B) - 2 \varphi(\diam(B))\\
&+2\diam(C) - 2 \varphi(\diam(C))\\
&+2\diam(D) - 2 \varphi(\diam(D))\\
&+2\diam(E) - 2 \varphi(\diam(E))\\
&+2\diam(F) - 2 \varphi(\diam(F)),
\end{align*}
where near $0$ we have that $\varphi(x) \sim x$ to first order, but with $\varphi(x)>x$ for $x>0$. This adjusted argument is no longer strong enough to show that the original configuration was a local minimum. In some ways, the reason why the argument fails is that there are too many \textit{breaking points}. We can fix this by augmenting $L$.\\

\begin{figure}[H]
        \centering
        \includegraphics[width=0.5\textwidth]{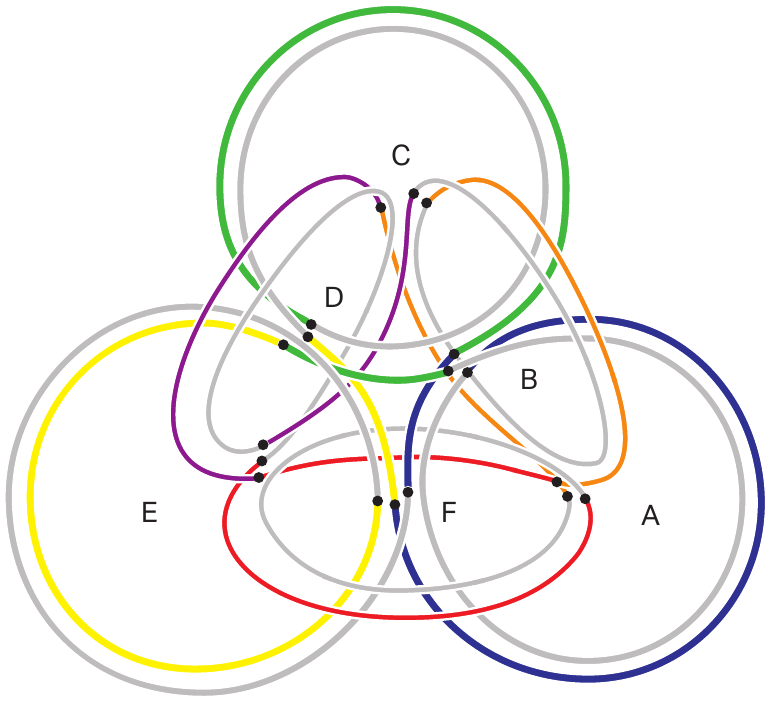} 
        \caption{A perturbation of an augmented conjoined circle decomposition.}\label{figure: circle decomposition augmented}
\end{figure}
In the configuration shown in \figref{figure: circle decomposition augmented}, we go through every one of the original $6$ circles one additional time. This only introduces one new breaking point for each circle, which corresponds to one new term of $-\varphi(\diam(\cdot))$ for each of the sets $A,B,C,D,E,F$. At the same time, each of the circles introduces a term of $2\diam(\cdot)$ for the sets $A,B,C,D,E,F$. This gives the inequality
\begin{align*}
\len(L') \geq 24\pi &+4\diam(A) - 3 \varphi(\diam(A))\\
&+4\diam(B) - 3 \varphi(\diam(B))\\
&+4\diam(C) - 3 \varphi(\diam(C))\\
&+4\diam(D) - 3 \varphi(\diam(D))\\
&+4\diam(E) - 3 \varphi(\diam(E))\\
&+4\diam(F) - 3 \varphi(\diam(F)).
\end{align*}
Since $\varphi(x)\sim x$ to first order, for small enough diameters this inequality yields
$$\len(L') \geq 24\pi$$
with equality if and only if $\diam(A)=\diam(B)=\dots=\diam(F)=0$. That is, we have $\len(L') \geq 24\pi$ with equality if and only if $L'$ also decomposes into circles.
\pagebreak

\subsectref{subsection: Circle decompositions - the details} is devoted purely to filling in the missing details.  \textbf{Therefore one might skip \subsectref{subsection: Circle decompositions - the details} to read the short proofs of the final results in \subsectref{subsection: A Gordian unlink and other examples}.}

\subsection{Circle decompositions - the details}\label{subsection: Circle decompositions - the details}
We now formally define what we mean by a circle decomposition and a conjoined circle decomposition. Throughout the rest of this paper $\frown$ will denote concatenation of curves.
\begin{defin}[circle decomposition]
Let $C$ be a closed curve such that the domain of $C$ decomposes into $I_1 \cup I_2 \cup \dots \cup I_n$, where each $I_i$ is a compact interval of positive length with $\max(I_i)=\min(I_{i+1})$,  for all $i$ (here indices are understood to be cyclic and the domain is seen as an interval with endpoints identified). We define $a_i$ via $[a_i,a_{i+1}]=I_i$. Let $\{A_j\}_{j =1}^m$ and $\{\alpha_l\}_{l=1}^k$ each be a partition of $\{1,2,\dots,n\}$ into non-empty sets. We say that $\left(C,\{a_i\}_{i=1}^n,\{A_j\}_{j=1}^m,\{\alpha_l\}_{l=1}^k \right)$ is a \textit{circle decomposition} of $C$ if for all $i_1,i_2\in \{1,\dots,n\}$ and $j \in \{1,\dots,m\}$:
\begin{itemize}
\item There is a bijection $\{1,2,\dots,c\} \to A_j$, $k\mapsto i_k$ and some $s_1,\dots,s_c \in \{1,-1\}$, such that $C^j := C_{i_1}^{s_1} \frown C_{i_2}^{s_2}\frown \dots \frown  C_{i_c}^{s_c}$ is a curve of length $2\pi$ parametrizing a unit circle. Here $C_{i_k}^1 := C|_{I_{i_k}}$ and $C_{i_k}^{-1}$ is defined to be the curve obtained by inverting the orientation of $C|_{I_{i_k}}$.
\item $C(a_{i_1})=C(a_{i_2}) \iff \text{ there is some } l \text{ with } i_1,i_2 \in \alpha_l$
\end{itemize}
\end{defin}
The length of a circle decomposition $\left(C,\{a_i\}_{i=1}^n,\{A_j\}_{j=1}^m,\{\alpha_l\}_{l=1}^k \right)$ is equal to $2m\pi$. 
\begin{rem}
From here on we will write circle decompositions just as\\
$(C,\{a_i\},\{A_j\},\{\alpha_l\})$, without specifically referencing the index-sets. We want to be able to talk about multiple circle decompositions at once. For consistency and readability, we state some conventions that will be used throughout the rest of this section.
\begin{itemize}
\item We implicitly use the definition $K_j:=\bigcup_{i\in A_j} C(I_i)$ and the definition of $z_j \in \R^3$ as the center of the circle $K_j$.
\item We use $i$ to index points or intervals in parameter space, where $I_i=[a_i,a_{i+1}]$. We call the points $a_i$ breaking points.
\item We use $j$ to index circles of the circle decomposition:  either for the collection of (indices representing) intervals $A_j$ that map to a circle, or for the closed curve $C^j$ parametrizing $K_j$.
\item We will use $l$ to index sets $\alpha_l$ of indices representing points $a_i$. We will call the sets $\alpha_l$ point clusters, as they are made up of indices representing a subset of $\{a_i\}$ with a common image point on $C$. Because of the way $\alpha_l$ is defined, we can define $C(\alpha_l):=C(a_i)$ for some (hence every)  $i\in \alpha_l$.
\end{itemize}
\end{rem}

\begin{defin}
Let $(C,\{a_i\},\{A_j\},\{\alpha_l\})$ be a circle decomposition.
We say that two indices $i_1,i_2 \in \alpha_l$ are \textit{distinguishable} if there are small neighborhoods $H_{i_1}$ around $a_{i_1}$ and $H_{i_2}$ around $a_{i_2}$ such that $C|_{H_{i_1}}$ and $C|_{H_{i_2}}$ intersect only at $C(\alpha_l)$.
\end{defin}
Equivalently, two indices are distinguishable if their local passages through $C(\alpha_l)$ have no one-sided geometric circle arc in common.

\begin{defin}
Let $(C,\{a_i\},\{A_j\},\{\alpha_l\})$ be a circle decomposition. If $a_i$ is on the boundary of one of the intervals $I_{i'}$ with $i'\in A_j$, then we say that $a_i$ is \textit{along} the circle $C^j$. We say that $\alpha_l$ is \textit{along} the circle $C^j$ if there is some $i\in \alpha_l$ with $a_i$ along $C^j$.
\end{defin}
It will be useful to define the notion of a transverse intersection in the non-differentiable setting.
\begin{defin}[transverse intersection]
Let $C:[0,1]\to \R^{n+1}$ be a curve and let $D:[0,1]^n \to \R^{n+1}$ be a disk. We define 
$$D-C:[0,1]\times[0,1]^n\to \R^{n+1}, \quad (D-C)(x,y)=D(y)-C(x).$$ We say that $C$ intersects $D$ \textit{transversely} at a point $p=D(y)=C(x)$ if there is an open disk $U\subseteq [0,1]^n$ around $y$ and an open interval $I\subseteq [0,1]$ around $x$ such that $D|_U$ and $C|_I$ only intersect in $p$ and such that the Brouwer degree of $(D-C)|_{I\times U}$ around $0$ is $\pm 1$.
\end{defin}

Next we define conjoined circle decompositions.
\begin{defin}
Let $(C_1,\{a_i\},\{A_j\},\{\alpha_l\})$ be a circle decomposition of $C_1$ and let
$(C_2, \{b_i\},\{B_j\},\{\beta_l\})$ be a circle decomposition of $C_2$. We say that $C_1 \cup C_2$ has a \textit{conjoined circle decomposition} if
\begin{itemize}
\item $\dist(C_1,C_2)=1,$
\item $K_j$ is a circle of $C_1$ $\implies$ there is some $\beta_l$ such that $z_j = C_2(\beta_l),$ and every intersection of  $C_2$ and $D_j$ is transverse, 
\item $K_j$ is a circle of $C_2$ $\implies$ there is some $\alpha_l$ such that $z_j = C_1(\alpha_l),$ and every intersection of  $C_1$ and $D_j$ is transverse,
\item for every $\alpha_l$ there is some circle $K_j$ of $C_2$ such that $C_1(\alpha_l)=z_j,$
\item for every $\beta_l$ there is some circle $K_j$ of $C_1$ such that $C_2(\beta_l)=z_j.$
\end{itemize}
The second and third conditions guarantee the existence of some functions $h^A$, $h^B$ defined as follows:
$$h^B(j) \text{, such that } C_2(\beta_{h^B(j)})=z_j, \quad h^A(j) \text{, such that } C_1(\alpha_{h^A(j)})=z_j.$$
So $h^A$ takes a $B$-index $j$ of some circle $K_j$ of $C_2$ to the $A$-index $l$ of the point cluster $\alpha_l$ which is mapped to $z_j$, and analogously for $h^B(j)$. The \textit{linking scheme} of $C_1$ and $C_2$ is defined to be the data for which indices $j$ belonging to the circle decomposition of $C_1$ and $j'$ belonging to the circle decomposition of $C_2$ the circles $K_j$ and $K_{j'}$ are non-trivially linked. The data of the conjoined circle decomposition are the data of the individual circle decompositions along with the functions $h^A$ and $h^B$, and the linking scheme.
\end{defin}

Every circle of one of the circle decompositions has its center pierced by the other curve, and this only ever happens transversely. The arc-length parametrization of both curves is piecewise $\mathcal{C}^1$ as it consists of circle segments. Further, the transitions between circle segments happen only when a curve pierces a unit disk of a unit circle of the other curve. Because of the condition $\dist(C_1,C_2)=1$, we know that each curve pierces these disks only at the center, and it does so perpendicularly. Therefore the arc-length parametrization of both curves is actually $\mathcal{C}^1$.

\begin{rem}
Let $(C_1,\{a_i\},\{A_j\},\{\alpha_l\}), (C_2, \{b_i\},\{B_j\},\{\beta_l\})$ be a conjoined circle decomposition. When we have a conjoined circle decomposition, we will refer to indices $i$, $j$, and $l$ belonging to the circle decomposition of $C_1$ as $A$-indices and to the indices $i$, $j$ and $l$ belonging to the circle decomposition of $C_2$ as $B$-indices. This way of handling the many different (kinds of) indices implicitly, without defining and naming $6$ different index sets to keep track of, will hopefully be easier to follow than the alternative.
\end{rem}
For technical reasons, we will need to consider a smaller class of conjoined circle decompositions.

\begin{defin}
Let $(C_1,\{a_i\},\{A_j\},\{\alpha_l\}), (C_2, \{b_i\},\{B_j\},\{\beta_l\})$ be a conjoined circle decomposition. We say that the conjoined circle decomposition is \textit{suitable}, if
\begin{itemize}

\item  Every circle $K_j$ of one of the curves is linked with exactly two circles $K_{j_1},K_{j_2}$ of the other curve,
\item for every $A$-index $l$: $\#\{j \text{ } A\text{-index } \mid \alpha_l \text{ along } C_1^j\}<2\#(h^A)^{-1}(l)$,
\item for every $B$-index $l$: $\#\{j \text{ } B\text{-index } \mid \beta_l \text{ along } C_2^j\}<2\#(h^B)^{-1}(l)$,
\item for every $\alpha_l$, there are two distinguishable $i_1,i_2\in \alpha_l$,
\item for every $\beta_l$, there are two distinguishable $i_1,i_2\in \beta_l$.
\end{itemize}
\end{defin}
The first condition is not strictly necessary for our arguments, but it will make the book-keeping a lot easier. Note that we really do mean that every circle belonging to one curve is linked with two distinct circles of the other curve, not with two circles with distinct indices. This is important as we can have $K_{j_1} = K_{j_2}$ even if $j_1\neq j_2$. The first condition also implies that two distinct geometric circles of one component have different linking partners. Indeed, a unit circle linked with two distinct unit circles of the other component must pass through their centers and meet their spanning disks perpendicularly, and there is at most one unit circle with this property. The next two suitability conditions can be rephrased as ``The number of circles broken up by any point cluster is less than two times the number of circles whose center is that point cluster''. This is the main condition needed to make our inequalities work out. The last two conditions are so that we can use \lemref{lemma: breaking point clustering is nice}. We can now state the main theorem of this chapter.

We equip the set of $2$-component links with the topology induced by the supremum norm on maps from $S^1\sqcup S^1$ to $\R^3$.

\begin{thm}\label{theorem: main theorem of chapter 4}
Let $S$ be the set of $2$-component links in some link homotopy class, that all have suitable conjoined circle decompositions with the same given index sets $\{A_j\}$, $\{\alpha_l\}$, $\{B_j\}$, $\{\beta_l\}$, the same index functions $h^A,h^B$ and the same linking scheme. Then $S$ is a closed, strictly locally minimizing set.
\end{thm}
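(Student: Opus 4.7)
The plan is to split the argument into a routine closedness verification and the substantive strict local minimization property. For closedness, I take a $C^1$-convergent sequence $L_n \in S$ and check that the conjoined circle decomposition structure persists in the limit: the breaking points $\{a_i^n\}, \{b_i^n\}$ subconverge, unit circle arcs form a closed family so each piece of the limit is again a unit circle arc, the conditions $\dist(C_1, C_2) = 1$ and $C_1(\alpha_l) = z_j$ pass to the limit, and transversality of piercings is stable under small $C^1$ perturbation (since all intersections take place at the centers of unit circles perpendicular to the circle's plane). Ropelength is constant on $S$: every element has length $2(m_A + m_B)\pi$, where $m_A, m_B$ are the fixed numbers of circles in the two decompositions.

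For the strict local minimization property, I follow the informal sketch. Given $L' = (C_1', C_2')$ sufficiently $C^1$-close to $L \in S$, each transverse intersection of $C_2$ with the disk of a circle $K_j$ of $C_1$ persists, producing piercing points on $C_2'$ near the original center $z_j$. For each $A$-index $j$, I assemble a closed curve $\widetilde{C}_{2,j}'$ by taking the arcs of $C_2'$ between consecutive piercings of the disk of $K_j'$ and closing up with straight segments through each cluster $\beta_{h^A(j)}'$. Applying \propref{proposition: main inequality with epsilons.} to $\widetilde{C}_{2,j}'$, using two piercing points near $z_j$ which are close to its center-of-mass spanning disk, gives
$$\len\bigl(\widetilde{C}_{2,j}'\bigr) \;\geq\; 2\pi + 2\,\diam\bigl(\text{two piercings of }K_j'\bigr) - O(\varepsilon).$$
Summing over all $A$- and $B$-circles and regrouping by point cluster yields an inequality of the form
$$\len(L') \;\geq\; 2(m_A + m_B)\pi + \sum_l c_l\,\diam(\alpha_l') + \sum_l c_l'\,\diam(\beta_l') - O(\varepsilon),$$
where $c_l$ equals twice the number of circles around cluster $l$ minus the number of circles broken at cluster $l$. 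The two suitability conditions on the cardinalities $\#(h^A)^{-1}(l)$ and $\#\{j \mid l\text{ along }C_1^j\}$ are exactly what force every $c_l, c_l' > 0$, making the leading correction strictly positive.

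The main obstacle, and the reason for the augmentation of the informal construction, is the coefficient accounting paired with controlling the $O(\varepsilon)$ term. Since the error in \propref{proposition: main inequality with epsilons.} is of the same order as the diameters themselves, the argument only closes if I can show that $\varepsilon$ for each application is bounded by a small multiple of the relevant cluster diameters, uniformly as $L' \to L$. I would argue this from the fact that piercing points of $C_2'$ on the disk of $K_j'$ lie within an $O(\|L-L'\|)$-neighborhood of the cluster $\beta_{h^A(j)}'$, and that the linear coefficient of this error can be made arbitrarily small by shrinking the neighborhood of $L$. A secondary technical issue is that the closing straight segments must be well-defined, for which the distinguishability hypothesis in the suitability definition is invoked: it lets me consistently pick two distinct branches at each cluster to close up $\widetilde{C}_{2,j}'$ so that the resulting curve really is closed and the Brouwer-degree style piercing count is well-defined (using \lemref{lemma: breaking point clustering is nice}).

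Once the accounting is carried out and the error is absorbed, taking the neighborhood of $L$ small enough produces $\len(L') \geq 2(m_A+m_B)\pi = \len(L)$, with equality forcing every $\diam(\alpha_l') = \diam(\beta_l') = 0$, i.e., forcing $L'$ itself to be a conjoined circle decomposition with the same combinatorial data, which is exactly the statement $L' \in S$.
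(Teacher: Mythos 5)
Your proposal follows the paper's architecture closely: split into closedness plus strict local minimization, extract subsequences of breaking points and pass the circle-decomposition structure to the limit, then for nearby $L'$ reassemble near-circles from arcs of $C_1'$ and $C_2'$, apply a Gehring-style lower bound with the perturbed piercing points, and regroup so that the suitability inequalities $\#\{j \mid l \text{ along } C^j\} < 2\#(h)^{-1}(l)$ make the per-cluster coefficients strictly positive. That much is right, and the coefficient accounting is identical to the paper's.

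The gap is in the estimate you invoke. You apply \propref{proposition: main inequality with epsilons.} to a curve closed up with straight segments through each cluster, but the hypotheses of that proposition are not satisfied: it requires the piercing points to stay at distance $\geq 1$ from the \emph{entire} closed curve, yet the closing segments through a cluster of diameter $q$ can pass within distance $< 1$ of the piercing points (the endpoints of the segments lie on $C_1'$, but the interior need not). Your $O(\varepsilon)$ is an attempt to absorb this, and your fix --- that the coefficient of the error can be made arbitrarily small by shrinking the neighborhood --- is asserted but not justified; as written there is no reason why the error should vanish faster than the cluster diameters. This is precisely the obstacle the paper addresses by introducing Lemmas \ref{lemma: avoiding two circles in the plane}--\ref{lemma: lemma about length of curve avoiding 1-neighborhood of two points} and Propositions \ref{proposition: length of almost closed curves}--\ref{proposition: length of closed curves, multi-curbe version} / Theorem \ref{theorem: estimate of length of almost closed curves for multiple curves}: instead of closing with a straight segment, one closes with a slightly longer curve that re-routes around the unit balls of the piercing points, and the penalty is controlled by $\varphi(t) = \arcsin(2t) - t$ rather than $t$. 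Since $\varphi(t) \sim t$ to first order (with $\varphi(t) > t$), the penalty is absorbed by the positive $2\#(h^A)^{-1}(l) \cdot q$ terms exactly when the suitability ratio exceeds $1$, giving $\varphi(x) \leq mx$ on a small interval for $m$ the minimum of those ratios. Without this (or an equally careful quantitative estimate showing the distance defect of the segment is genuinely quadratic in the cluster diameter, which you do not prove), the inequality $\len(L') \geq \len(L)$ does not close.

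Two lesser points. First, your curve assembly is mislabeled: for an $A$-index $j$, the curve to be reassembled consists of arcs of $C_1'$ (not $C_2'$), closed through the clusters $\alpha_l$ along $K_j$, with the piercing points supplied by $C_2'$ near $\beta_{h^B(j)}'$ (note $h^B$, not $h^A$). Second, the closedness of $S$ must be in the $C^0$ topology on which the ropelength functional and thick homotopies live --- the paper works throughout with $\|L - L'\| = \sup$ distance --- and the passage to the limit for the positivity of interval lengths and the distinctness of clusters is not automatic but is extracted from the linking scheme and suitability; a $C^1$-convergence hypothesis would give a weaker statement than needed for \propref{proposition: every point of a closed, strictly minimizing set is a sink}.
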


The main tool in proving \theoref{theorem: main theorem of chapter 4} will be finding a lower bound for the length of \textit{almost closed} curves avoiding some set of points. In the following lemmas, $U_r(\cdot)$ will refer to the open $r$-neighborhood of a set or point.

\begin{lem}\label{lemma: avoiding two circles in the plane}
Let $D_1$ and $D_2$ be two open disks in $\R^2$, with radii $r_1,r_2$ and centers $z_1,z_2$ respectively, with $r_1,r_2 \in [\tfrac{1}{2},1]$ and $||z_1-z_2||<\tfrac{1}{2}$. For all $a,b \in \R^2 \setminus \left(D_1 \cup D_2\right)$ with $||a-b||<\tfrac{1}{2}$ and all $m\in D_1 \cap D_2$ not collinear with $a$ and $b$, there exists a curve $C$ from $a$ to $b$, such that $$\len(C) \leq \arcsin(2||a-b||)-||a-b||,$$ obtained by some radial projection of the straight-line segment $\overline{ab}$ from $m$. 
\end{lem}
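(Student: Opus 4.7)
The plan is to construct $C$ as the radial projection of $\overline{ab}$ from $m$ and then bound $\len(C)$ via an elementary chord-to-arc estimate. First, for each $p\in\overline{ab}$, set $C(p)=p$ if $p\notin D_1\cup D_2$, and otherwise push $p$ outward along the ray from $m$ through $p$ to the unique exit point $m+\rho(\theta(p))\,e_{\theta(p)}\in\partial(D_1\cup D_2)$, where $\theta(p)$ is the polar angle of $p$ around $m$ and $\rho(\theta)=\sup\{r\geq 0: m+re_\theta\in D_1\cup D_2\}$. Because $m$ lies in the open intersection of two convex disks, each ray from $m$ sits in $D_1\cup D_2$ on a single interval $[0,\rho(\theta)]$, so $C$ is continuous at the boundary crossings of $\overline{ab}$ and defines a curve from $a$ to $b$ in the complement of the open disks, consisting of sub-segments of $\overline{ab}$ glued to arcs of $\partial(D_1\cup D_2)$.

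Next I would decompose $\overline{ab}$ into pieces $J_{\mathrm{out}}$ outside $D_1\cup D_2$ (total length $\ell_{\mathrm{out}}$) and chord-pieces $J_{\mathrm{in}}$ inside the union. The key chord-to-arc estimate is that on a circle of radius $r$, a chord of length $c$ subtends an arc of length $2r\arcsin(c/(2r))$; viewed as a function of $r$ on $[c/2,\infty)$ this is decreasing, hence bounded above at $r=\tfrac12$ by $\arcsin(c)$. Since $\arcsin$ is convex with $\arcsin(0)=0$, it is super-additive on $[0,1]$, so summing the arc contributions from each inside piece and each transition between $\partial D_1$ and $\partial D_2$ gives total arc length at most $\arcsin$ of the sum of the relevant chord lengths. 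A monotonicity check on $s\mapsto s+\arcsin(\|a-b\|-s)$ (derivative $1-1/\sqrt{1-(\|a-b\|-s)^2}\leq 0$) then yields $\len(C)\leq \arcsin(\|a-b\|)$, and a straightforward derivative comparison (both sides vanish at $0$ with matching first derivatives, while the higher derivatives of $\arcsin(2x)-x$ dominate) gives $\arcsin(\|a-b\|)\leq \arcsin(2\|a-b\|)-\|a-b\|$ on $[0,\tfrac12]$, which is the target bound.

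The main obstacle is handling arcs of $C$ that cross a ``corner'' of $\partial(D_1\cup D_2)$ at a point of $\partial D_1\cap\partial D_2$: at such a transition a single chord of $\overline{ab}$ is replaced by arcs on two different circles whose sub-chord lengths can sum to strictly more than the original chord length, so super-additivity of $\arcsin$ does not immediately close the argument. Here the hypothesis $\|z_1-z_2\|<\tfrac12$ combined with $r_i\in[\tfrac12,1]$ enters essentially, bounding the angle at which the two boundary circles meet and hence the ``extra'' chord length accumulated at each corner; the slack in passing from $\arcsin(\|a-b\|)$ to the looser bound $\arcsin(2\|a-b\|)-\|a-b\|$ is just enough to absorb this extra. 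The hypothesis that $m,a,b$ are not collinear is used only to ensure $\theta(p)$ is monotonic along $\overline{ab}$, making the radial projection a well-defined simple curve.
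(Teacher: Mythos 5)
Your construction of $C$ as the radial projection of $\overline{ab}$ from $m$, the chord-to-arc estimate $2r\arcsin(c/2r)\leq\arcsin(c)$ for $r\geq\tfrac12$, superadditivity of $\arcsin$ via convexity, and your single-arc reduction (decreasing monotonicity of $s\mapsto s+\arcsin(\|a-b\|-s)$, followed by $\arcsin(\|a-b\|)\leq\arcsin(2\|a-b\|)-\|a-b\|$) all match the skeleton of the paper's proof. But the last paragraph of your proposal is the load-bearing step, and it is left entirely as a claim: you observe that when the boundary path $A$ crosses a corner of $\partial(D_1\cup D_2)$ at a point $p\in\partial D_1\cap\partial D_2$, the two chords $\|w_1-p\|$ and $\|p-w_4\|$ can sum to strictly more than $\|w_1-w_4\|$, and you then assert that the slack between $\arcsin(\|a-b\|)$ and $\arcsin(2\|a-b\|)-\|a-b\|$, together with the hypotheses $\|z_1-z_2\|<\tfrac12$, $r_i\in[\tfrac12,1]$, is "just enough" to absorb this excess. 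No estimate on the corner angle, on $\|w_1-p\|+\|p-w_4\|-\|w_1-w_4\|$, or on how these feed into the final inequality is given. This is precisely the hard part of the lemma, and as written it is a hope, not a proof; the argument would need to be carried through quantitatively to count as complete.

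The paper closes this gap by a different decomposition that never estimates the corner angle at all. In the two-arc case $A=A_1\frown A_2$ with corner $p$, and with $\overline{ab}$ meeting $\partial D_1$ at $w_1,w_3$ and $\partial D_2$ at $w_2,w_4$ (in that order along $\overline{ab}$), one extends $A_1$ by the arc $E_1\subset\partial D_1$ from $p$ to $w_3$, and $A_2$ by the arc $E_2\subset\partial D_2$ from $w_2$ to $p$. Now $A_1\frown E_1$ and $E_2\frown A_2$ are each single full arcs on one circle whose endpoints lie on $\overline{ab}$, so their lengths are controlled by $\arcsin(\|w_1-w_3\|)$ and $\arcsin(\|w_2-w_4\|)$ — chords that lie on the segment $\overline{ab}$ and therefore add up correctly. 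Then
$$\len(A)=\len(A_1\frown E_1)+\len(E_2\frown A_2)-\len(E_1)-\len(E_2),$$
and the correction term is bounded below by $\|w_2-w_3\|$ because $E_2\frown E_1$ is a path from $w_2$ to $w_3$. The remainder is arithmetic with superadditivity and $\arcsin(x)\geq x$, and the three-arc case is treated in the same spirit by completing the middle arc. In particular, the hypotheses $\|z_1-z_2\|<\tfrac12$ and $r_i\in[\tfrac12,1]$ are not used in the way you guessed (bounding a corner angle); they serve to keep arguments of $\arcsin$ in range and to make the chord-to-arc bound usable at $r=\tfrac12$. You should replace your final paragraph with an argument along these lines rather than relying on an unquantified slack.
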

\begin{proof}
Consider the straight line segment $\overline{ab}$. If $\overline{ab}$ avoids $D_1 \cup D_2$, then we take $C= \overline{ab}$. If $\overline{ab}$ intersects $D_1 \cup D_2$ then we define $C$ by radially projecting (from $m$) the part of $\overline{ab}$ that lies between the first and the final intersection of $\overline{ab}$ and $D_1 \cup D_2$ onto $\partial(D_1\cup D_2)$ and by leaving the rest of $\overline{ab}$ the same. Then $C = l_1 \frown A \frown l_2$ where $l_1,l_2$ are straight line segments which are part of $\overline{ab}$ and where $A$ is the path on the boundary. 
There are three cases, either $A$ is a circular arc, the concatenation of two circular arcs, or the concatenation of three circular arcs. Checking that $\len(C) \leq \arcsin(2||a-b||)-||a-b||$ in all three cases is elementary.
\end{proof}

\begin{lem}\label{lemma: lemma about length of curve avoiding 1-neighborhood of two points}
Let $x,y,m \in \mathbb{R}^3$, not necessarily distinct, with $||x-y||,||x-m||,||y-m||< \tfrac{1}{2}$. Let $a,b \in \mathbb{R}^3 \setminus U_1(\{x,y\})$ with $||a-b|| < \tfrac{1}{2}$.
Let $P$ be a plane through $a$, $b$ and $m$. Then there is a curve $C \subseteq P$ from $a$ to $b$, avoiding $U_1(\{x,y\})$ with $\len(C) \leq \arcsin(2||a-b||)-||a-b||$ and such that $C$ is obtained by radial projection of the straight line segment $\overline{ab}$ away from $m$.  
\end{lem}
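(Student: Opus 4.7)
The plan is to reduce the three-dimensional problem to the planar setting of Lemma~\ref{lemma: avoiding two circles in the plane} by slicing the forbidden region $U_1(\{x,y\})$ with the plane $P$.

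First I will define $D_1 := U_1(x) \cap P$ and $D_2 := U_1(y) \cap P$. These are open disks in $P$ whose centers $z_1, z_2$ are the orthogonal projections of $x, y$ onto $P$, and whose radii are $r_i = \sqrt{1 - \dist(\cdot, P)^2}$. I then verify the hypotheses of the previous lemma. Since $m \in P$ and $\|x-m\|, \|y-m\| < \tfrac{1}{2}$, both $\dist(x,P)$ and $\dist(y,P)$ are strictly less than $\tfrac{1}{2}$, so $r_1, r_2 > \tfrac{\sqrt{3}}{2}$; in particular $r_1, r_2 \in [\tfrac{1}{2}, 1]$. Orthogonal projection onto $P$ is $1$-Lipschitz, so $\|z_1 - z_2\| \leq \|x-y\| < \tfrac{1}{2}$. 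Since $\|m-x\|, \|m-y\| < 1$ and $m \in P$, we have $m \in D_1 \cap D_2$. Finally, $a, b \in P$ by choice of $P$ and $a, b \notin U_1(\{x,y\})$ by hypothesis, so $a, b \in P \setminus (D_1 \cup D_2)$; also $\|a-b\| < \tfrac{1}{2}$.

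With every hypothesis verified, Lemma~\ref{lemma: avoiding two circles in the plane} applied in the plane $P$ produces a curve $C \subseteq P$ from $a$ to $b$ avoiding $D_1 \cup D_2$ with $\len(C) \leq \arcsin(2\|a-b\|) - \|a-b\|$, obtained by radial projection of $\overline{ab}$ from $m$. Because $C$ lies in $P$, avoiding $D_1 \cup D_2 = U_1(\{x,y\}) \cap P$ inside $P$ is equivalent to avoiding $U_1(\{x,y\})$ in all of $\mathbb{R}^3$, so $C$ is the curve we want.

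The only technical wrinkle is the non-collinearity assumption in the previous lemma: if $a$, $b$, $m$ happen to be collinear in $P$, I will handle this by perturbation. Because $D_1 \cap D_2$ is open in $P$ and contains $m$, we can select $m_n \to m$ in $D_1 \cap D_2$ with $m_n$ not collinear with $a, b$, apply the previous lemma to each $m_n$, and pass to a limit; the length bound is preserved by continuity. This collinearity bookkeeping is the main minor obstacle; the geometric substance of the inequality is already entirely contained in the previous lemma.
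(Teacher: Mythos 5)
Your reduction to Lemma~\ref{lemma: avoiding two circles in the plane} via the planar slices $D_i = U_1(\cdot)\cap P$ is exactly the paper's argument, with the same hypothesis checks (radii in $[\tfrac{1}{2},1]$ from $\dist(x,P)\leq\|x-m\|<\tfrac{1}{2}$, projected centers within $\tfrac{1}{2}$ by Lipschitzness, $m\in D_1\cap D_2$). For the collinear case the paper argues more directly: $m\in\overline{ab}$ would give $\|a-x\|\leq\|a-b\|+\|m-x\|<1$, so $m\notin\overline{ab}$, and then convexity of $U_1(x)$ and $U_1(y)$ (each containing $m$) shows $\overline{ab}$ itself already avoids both balls; this sidesteps the Arzel\`a--Ascoli and lower-semicontinuity-of-length justification that your ``pass to a limit by continuity'' step is implicitly relying on.
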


\begin{proof}
Toward a contradiction, assume $m \in \overline{ab}$. Then $||a-x||\leq ||a-m||+||m-x|| \leq ||a-b|| + ||m-x|| <1$, which contradicts $a\notin U_1(\{x,y\})$. So $m\notin \overline{ab}$. This means that if $a$, $b$ and $m$ are collinear, then either $a\in \overline{mb}$ or $b\in \overline{am}$. Without loss of generality assume that $a\in \overline{mb}$. Toward a contradiction, assume there is some $c\in \overline{ab}$ with $c\in U_1(x)$. Since $U_1(x)$ is convex and since $a\in \overline{mc}$, this implies $a\in U_1(x)$, which is not the case. We conclude that $\overline{ab}$ and $U_1(x)$ are disjoint. Analogously we can conclude that $\overline{ab}$ and $U_1(y)$ are disjoint. We define $C=\overline{ab}$. If $a$, $b$ and $m$ are not collinear, then by taking the intersection of $U_1(x)$ and $U_1(y)$ with $P$, the problem reduces to \lemref{lemma: avoiding two circles in the plane}.
\end{proof}

\begin{prop}\label{proposition: length of almost closed curves}
Let $C:[0,1] \to \R^3$ be a curve. Let $\bar{C}=C\frown S$ be the closed curve obtained by concatenating $C$ with the straight line segment from $a=C(1)$ to $b=C(0)$ with $||a-b||<\tfrac{1}{2}$. Let $m$ be a point in the convex hull of $\bar{C}$, $K$ the $m$-cone of $\bar{C}$ and $x,y \in K$ with $1\leq \dist(x,C),\dist(y,C)$ and $||x-y||,||x-m||,||y-m||<\tfrac{1}{2}.$ Let $\varphi(t)=\arcsin(2t)-t$. Then
$$\len(C) \geq 2\pi +2||x-y|| -\varphi(||a-b||)$$
\end{prop}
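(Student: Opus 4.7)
The plan is to close $C$ up with a short auxiliary curve $C'$ from $a$ to $b$ that avoids $U_1(\{x,y\})$, and then apply \propref{proposition: two-point arbitrary radius inequality} with $r = 1$ to the closed curve $\tilde{C} := C \frown C'$. The price we pay is at most $\len(C')$, which \lemref{lemma: lemma about length of curve avoiding 1-neighborhood of two points} will bound by $\varphi(\|a-b\|)$.

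First I would invoke \lemref{lemma: lemma about length of curve avoiding 1-neighborhood of two points} to produce such a $C'$, lying in a plane $P$ through $a$, $b$, and $m$, with $\len(C') \leq \arcsin(2\|a-b\|) - \|a-b\| = \varphi(\|a-b\|)$. Its hypotheses hold: $\|x-y\|, \|x-m\|, \|y-m\| < \tfrac{1}{2}$ and $\|a-b\| < \tfrac{1}{2}$ are given, and since $a = C(1)$, $b = C(0)$ while $\dist(x,C), \dist(y,C) \geq 1$, both $a$ and $b$ lie outside $U_1(\{x,y\})$. The resulting $C'$ satisfies $\dist(C', \{x,y\}) \geq 1$ by construction, so $\dist(\tilde{C}, x), \dist(\tilde{C}, y) \geq 1$ as well.

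The critical step is verifying that $x, y$ lie in the $m$-cone of $\tilde{C}$ (and that $m \in \conv(\tilde{C})$), so that \propref{proposition: two-point arbitrary radius inequality} actually applies to $\tilde{C}$. Here I would exploit the extra information from the lemma that $C'$ is obtained from $\overline{ab}$ by radial projection away from $m$: for every $q \in \overline{ab}$ the ray from $m$ through $q$ meets $C'$ at some point $q'$ with $\|m-q\| \leq \|m-q'\|$, so $\overline{mq} \subseteq \overline{mq'}$. Taking the union over $q \in \overline{ab}$ shows that the $m$-cone of $\overline{ab}$ is contained in the $m$-cone of $C'$, and therefore the $m$-cone $K$ of $\bar{C} = C \cup \overline{ab}$ sits inside the $m$-cone $K'$ of $\tilde{C} = C \cup C'$. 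In particular $x,y \in K \subseteq K'$, and $m$ (which lies in its own cone) is in $K' \subseteq \conv(\tilde{C})$.

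Finally, \propref{proposition: two-point arbitrary radius inequality} applied to $\tilde{C}$ with $r = 1$ yields $\len(\tilde{C}) \geq 2\pi + 2\|x-y\|$, and subtracting $\len(C') \leq \varphi(\|a-b\|)$ from both sides gives the desired $\len(C) = \len(\tilde{C}) - \len(C') \geq 2\pi + 2\|x-y\| - \varphi(\|a-b\|)$. The main obstacle is the cone-containment argument: one must carefully unpack the construction of $C'$ inside the proof of \lemref{lemma: lemma about length of curve avoiding 1-neighborhood of two points} (noting that its straight pieces are segments of $\overline{ab}$ itself while its arc pieces are genuine radial projections onto $\partial U_1(\{x,y\})$) and check that the ``push outward from $m$'' property holds uniformly along $\overline{ab}$. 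Once this is in hand, the rest of the argument is routine bookkeeping.
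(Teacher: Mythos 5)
Your proposal is correct and follows essentially the same route as the paper: replace the straight chord $\overline{ab}$ by the detour $C'$ from \lemref{lemma: lemma about length of curve avoiding 1-neighborhood of two points}, apply \propref{proposition: two-point arbitrary radius inequality} with $r=1$ to $C \frown C'$, and subtract $\len(C') \leq \varphi(\|a-b\|)$. The only difference is presentational: the paper simply asserts that $m \in \conv(C\frown \tilde{C})$ and $x,y$ lie in the new $m$-cone, whereas you justify this explicitly via the ``radial projection pushes outward from $m$'' property recorded in the lemma, which is a worthwhile detail to spell out.
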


\begin{proof}
Let $P$ be a plane containing $m$, $a$ and $b$. Then, by \lemref{lemma: lemma about length of curve avoiding 1-neighborhood of two points}, there is a curve $\tilde{C}$ in $P$ connecting $a$ to $b$ obtained by projecting $S$ radially away from $m$ in $P$ with $\len(\tilde{C}) \leq \varphi(||a-b||)$ and with $1\leq \dist(x,\tilde{C}),\dist(y,\tilde{C})$. Let $\tilde{K}$ be the $m$-cone of $C\frown \tilde{C}$. Note that $m\in \conv(C\frown S)\subseteq \conv(C\frown \tilde{C})$ and that $x,y \in \tilde{K}$. Since $C\frown \tilde{C}$ avoids $U_1(\{x,y\})$, we can apply \propref{proposition: two-point arbitrary radius inequality} to get
$$\len(C) + \len(\tilde{C}) = \len(C\frown \tilde{C}) \geq 2\pi + 2||x-y||,$$
hence
$$\len(C) \geq 2\pi + 2||x-y|| - \len(\tilde{C}) \geq 2\pi + 2||x-y|| - \varphi(||a-b||).$$
\end{proof}
This proposition can easily be generalized.

\begin{prop}\label{proposition: length of closed curves, multi-curbe version}
Let $C_i:[0,1]\to \R^3$, $i=1,\dots,k$ be curves. Let $C$ be the curve obtained by interpolating between curves $C_i$ and $C_{i+1}$ with straight line segments $s_{i,i+1}$ (here $k+1$ is interpreted as $1$). Let $m$ be in the convex hull of $C$, $K$ the $m$-cone of $C$, and let $x,y \in K$ with $1\leq \dist(x,C_i),\dist(y,C_i)$ for $i=1,\dots,k$ and $||x-y||,||x-m||,||y-m||<\tfrac{1}{2}$. Let $\varphi(t) = \arcsin(2t)-t$. Then 
$$\sum_{i=1}^k \len(C_i) \geq 2\pi + 2||x-y|| - \sum_{i=1}^k \varphi\left(||C_i(1)-C_{i+1}(0)||\right).$$

\end{prop}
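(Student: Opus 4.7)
The plan is to mimic the proof of \propref{proposition: length of almost closed curves}, replacing each straight line segment $s_{i,i+1}$ by a radially projected curve avoiding the forbidden neighborhoods, and then close up into a single curve on which we can apply \propref{proposition: two-point arbitrary radius inequality}.

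Concretely, for each $i \in \{1,\dots,k\}$ write $a_i = C_i(1)$ and $b_i = C_{i+1}(0)$ so that $s_{i,i+1}$ is the straight line segment from $a_i$ to $b_i$. Pick a plane $P_i$ containing $m$, $a_i$, and $b_i$, and apply \lemref{lemma: lemma about length of curve avoiding 1-neighborhood of two points} to obtain a curve $\tilde{s}_{i,i+1} \subseteq P_i$ from $a_i$ to $b_i$, avoiding $U_1(\{x,y\})$, of length at most $\varphi(\|a_i - b_i\|)$, and produced by radial projection of $s_{i,i+1}$ away from $m$. Form the closed curve
$$\tilde C := C_1 \frown \tilde{s}_{1,2} \frown C_2 \frown \tilde{s}_{2,3} \frown \cdots \frown C_k \frown \tilde{s}_{k,1}.$$

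Next I would verify that $\tilde C$ satisfies the hypotheses of \propref{proposition: two-point arbitrary radius inequality} with $r=1$ and base point $m$. The distance condition $\dist(x,\tilde C),\dist(y,\tilde C) \ge 1$ follows from the assumed distance bounds on each $C_i$ together with the avoidance property of the $\tilde{s}_{i,i+1}$. To see that $m$ is in the convex hull of $\tilde C$, note that the endpoints $a_i, b_i$ of each $s_{i,i+1}$ lie on $\tilde C$, so the segment $s_{i,i+1}$ itself lies in $\conv(\tilde C)$; hence the image of the original closed curve $C$ is contained in $\conv(\tilde C)$, and in particular $m \in \conv(C) \subseteq \conv(\tilde C)$. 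Finally, since each $\tilde{s}_{i,i+1}$ was obtained by radial projection of $s_{i,i+1}$ away from $m$, the $m$-cone of $\tilde{s}_{i,i+1}$ contains the $m$-cone of $s_{i,i+1}$, so the $m$-cone $\tilde K$ of $\tilde C$ contains the original $m$-cone $K$, and in particular contains $x$ and $y$.

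Applying \propref{proposition: two-point arbitrary radius inequality} to $\tilde C$ then gives
$$\sum_{i=1}^k \len(C_i) + \sum_{i=1}^k \len(\tilde{s}_{i,i+1}) \;=\; \len(\tilde C) \;\ge\; 2\pi + 2\|x-y\|,$$
and using $\len(\tilde{s}_{i,i+1}) \le \varphi(\|a_i - b_i\|)$ yields the desired inequality. The only nontrivial points to pin down carefully are the inclusion of cones $K \subseteq \tilde K$ (which follows from the radial nature of the projection) and checking that the hypothesis $\|a_i - b_i\| < \tfrac{1}{2}$ needed to invoke \lemref{lemma: lemma about length of curve avoiding 1-neighborhood of two points} is available; in the intended applications this follows from working locally near a conjoined circle decomposition, but if one wants a clean statement one should either add this as an explicit hypothesis to the proposition or observe that $\varphi$ can be extended by $+\infty$ outside $[0,\tfrac{1}{2})$ so that the bound becomes vacuous when the endpoints are far apart. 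This last bookkeeping about the domain of $\varphi$ is the only mildly subtle step; everything else is a direct transcription of the single-segment argument to the $k$-segment setting.
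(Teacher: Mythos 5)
Your proof is correct and is exactly the intended generalization of \propref{proposition: length of almost closed curves}, which the paper leaves implicit (it only remarks that the single-segment proposition ``can easily be generalized''): replace each interpolating straight segment by the radially projected detour from \lemref{lemma: lemma about length of curve avoiding 1-neighborhood of two points}, note that radial projection away from $m$ enlarges the $m$-cone, and apply \propref{proposition: two-point arbitrary radius inequality} to the resulting closed curve. You are also right that the hypothesis $\|C_i(1)-C_{i+1}(0)\|<\tfrac{1}{2}$ is needed (it appears explicitly in \propref{proposition: length of almost closed curves} but was dropped from the statement of the multi-curve version, where it is implicitly required for $\varphi$ to even be defined); in the paper's application it holds automatically because the $\delta_1$-closeness of the perturbed breaking points to the original clusters is arranged to be small, but your suggestion to add it as an explicit hypothesis is the cleaner fix.
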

The following theorem is an \textit{almost-closed} analog of \propref{proposition: main inequality without epsilons} and an easy consequence of the previous proposition.

\begin{thm}\label{theorem: estimate of length of almost closed curves for multiple curves}
Let $C_i:[0,1]\to \R^3$, $i=1,\dots,k$ be curves. Let $C$ be the curve obtained by interpolating between curves $C_i$ and $C_{i+1}$ with straight line segments $s_{i,i+1}$ (here $k+1$ is interpreted as $1$). Let $m$ be in the convex hull of $C$, $K$ the $m$-cone of $C$, and let $A\subseteq K$ with $1\leq \dist(A,C_i)$ for $i=1,\dots,k$ and $\sup\{||x-m|| \mid x\in A\},\diam(A)<\tfrac{1}{2}$. Let $\varphi(t)=\arcsin(2t)-t$. Then
$$\sum_{i=1}^k \len(C_i) \geq 2\pi + 2\diam(A) -  \sum_{i=1}^k \varphi\left(||C_i(1)-C_{i+1}(0)||\right).$$
\end{thm}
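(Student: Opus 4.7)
The plan is to derive this theorem directly from Proposition \ref{proposition: length of closed curves, multi-curbe version} by taking a supremum over pairs of points in $A$. That proposition already gives the same inequality, but with the term $2\|x-y\|$ for an individual pair $(x,y)\in K\times K$ in place of $2\diam(A)$; since only this single summand depends on $x$ and $y$, it suffices to check that the hypotheses transfer uniformly over $A\times A$ and then pass to the supremum.

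First, I would verify that for every pair $x,y\in A$ the hypotheses of Proposition \ref{proposition: length of closed curves, multi-curbe version} are satisfied. The inclusion $A\subseteq K$ gives $x,y\in K$; the assumption $1\leq \dist(x,C_i)$ for every $x\in A$ and every $i$ yields the required distance bounds to each $C_i$; and the hypothesis $\sup\{\|x-m\| \mid x\in A\}, \diam(A)<\tfrac{1}{2}$ implies $\|x-m\|,\|y-m\|<\tfrac{1}{2}$ and $\|x-y\|\leq \diam(A)<\tfrac{1}{2}$. Hence Proposition \ref{proposition: length of closed curves, multi-curbe version} applies and gives
$$\sum_{i=1}^k \len(C_i) \geq 2\pi + 2\|x-y\| - \sum_{i=1}^k \varphi\!\left(\|C_i(1)-C_{i+1}(0)\|\right)$$
for every pair $x,y\in A$.

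Second, I would observe that the right-hand side, except for the term $2\|x-y\|$, is independent of $x$ and $y$. Taking the supremum over $(x,y)\in A\times A$ and using the definition $\diam(A)=\sup_{x,y\in A}\|x-y\|$ (approached along a maximizing sequence if not attained) yields the claimed inequality. There is essentially no obstacle: the only work is the routine check that the closeness and distance conditions for a single pair lift uniformly to $A$, after which the supremum argument is immediate because the inequality is preserved under taking suprema of its variable summand.
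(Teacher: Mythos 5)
Your proposal is correct and matches the paper's intent: the paper gives no explicit proof, stating only that the theorem is ``an easy consequence of the previous proposition,'' i.e.\ of Proposition~\ref{proposition: length of closed curves, multi-curbe version}, and your argument (apply that proposition pointwise to each pair $x,y\in A$ after checking the hypotheses transfer, then take the supremum over $A\times A$ to turn $2\|x-y\|$ into $2\diam(A)$) is precisely the intended derivation.
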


Now that we have proved the main tool for our upcoming construction, we will state some technical lemmas. Since their proofs are standard, we have omitted them. 
\begin{lem}\label{lemma: stability of transverse intersections}
Let $D$ be a parametrized disk, and let $C$ be a curve such that $C$ and $D$ intersect transversely. Then there is an $\varepsilon>0$ such that for every parametrized disk $\tilde{D}$ with the same domain as $D$ and every curve $\tilde{C}$ with the same domain as $C$,
 $$||D-\tilde{D}||, ||C-\tilde{C}|| < \varepsilon \implies \tilde{C} \cap \tilde{D} \neq \emptyset.$$
\end{lem}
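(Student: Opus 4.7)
The plan is to invoke the homotopy invariance of the Brouwer degree (\theoref{theorem: brouwer degree around a point}) via the straight-line homotopy between $F := D - C$ and $\tilde{F} := \tilde{D} - \tilde{C}$. By the paper's definition of a transverse intersection, the hypothesis already furnishes an open disk $U \subseteq [0,1]^n$ around $x$ and an open interval $I \subseteq [0,1]$ around $y$ for which $F^{-1}(0) \cap (U \times I) = \{(x,y)\}$ and $d(U \times I, F) = \pm 1$; implicitly, this forces $0 \notin F(\partial(U \times I))$. Most of the work is therefore already packaged into the transversality hypothesis, and the remaining task is to verify that replacing $F$ by a sufficiently small perturbation $\tilde{F}$ preserves the hypotheses of the degree theorem.

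First, I would extract a uniform lower bound on $\|F\|$ along $\partial(U \times I)$: since $\partial(U \times I)$ is compact and $F$ is continuous with $0 \notin F(\partial(U \times I))$, there exists $\delta > 0$ with $\|F(z)\| \geq \delta$ for every $z \in \partial(U \times I)$. I would then set $\varepsilon := \delta/3$. For any $\tilde{D}, \tilde{C}$ with $\|D - \tilde{D}\| < \varepsilon$ and $\|C - \tilde{C}\| < \varepsilon$, the triangle inequality gives $\|F - \tilde{F}\|_\infty \leq 2\varepsilon < \delta$ on $\overline{U \times I}$.

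Next, I would consider the straight-line homotopy $H_t := (1-t)F + t\tilde{F}$ for $t \in [0,1]$. For any $z \in \partial(U \times I)$,
$$\|H_t(z)\| \geq \|F(z)\| - t\,\|F(z) - \tilde{F}(z)\| \geq \delta - 2\varepsilon = \delta/3 > 0,$$
so $0 \notin H_t(\partial(U \times I))$ for every $t \in [0,1]$. Homotopy invariance then yields $d(U \times I, \tilde{F}) = d(U \times I, F) = \pm 1 \neq 0$. By the stated property that $0 \notin \tilde{F}(U \times I)$ forces $d(U \times I, \tilde{F}) = 0$, there must exist $(x', y') \in U \times I$ with $\tilde{D}(x') = \tilde{C}(y')$, producing the required intersection point of $\tilde{C}$ and $\tilde{D}$.

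The argument is essentially bookkeeping on top of the existence theorem for the Brouwer degree, so there is no real obstacle. The only subtlety worth flagging is that the paper's definition of transversality is tailored precisely so that this lemma is immediate: both the isolated intersection and the $\pm 1$ degree are supplied by assumption. Were one working with a weaker notion of transversality — say, a purely local topological condition — the main obstacle would instead be to first construct a neighborhood on which the degree is simultaneously well-defined and nonzero, but here that step is free.
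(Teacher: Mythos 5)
Your proof is correct, and since the paper explicitly omits the proof of this lemma as ``standard,'' the degree-theoretic argument you give via the straight-line homotopy $H_t=(1-t)F+t\tilde F$ is precisely the intended one: the paper's definition of transverse intersection is tailored to hand you a domain $U\times I$ on which $d(U\times I,F)=\pm1$ and $0\notin F(\partial(U\times I))$, and the compactness bound $\delta$ together with homotopy invariance then forces $d(U\times I,\tilde F)=\pm1\neq 0$. The bookkeeping (triangle inequality, $\varepsilon=\delta/3$, boundary estimate) is all in order.
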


\begin{lem}\label{lemma: intersection of compact and closed set}
Let $A \subseteq \R^3$ be compact and $B\subseteq \R^3$ be closed with $A \cap B = \{p\}$. Then, for every $\varepsilon>0$ there exists a $\delta>0$ such that for all $x\in A$, $\dist(x,B)<\delta \implies \dist(x,p)<\varepsilon$.
\end{lem}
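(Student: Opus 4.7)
The plan is to argue by contradiction using the sequential compactness of $A$ together with continuity of the distance-to-$B$ function. First I would negate the conclusion: fix some $\varepsilon_0 > 0$ such that for every $n \in \N$, there is a point $x_n \in A$ satisfying $\dist(x_n, B) < 1/n$ and $\dist(x_n, p) \geq \varepsilon_0$.

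Next, since $A$ is compact, I would pass to a subsequence (still denoted $x_n$) converging to some $x_\infty \in A$. The map $y \mapsto \dist(y, B)$ is $1$-Lipschitz and hence continuous, so $\dist(x_\infty, B) = \lim_{n \to \infty} \dist(x_n, B) = 0$. Closedness of $B$ then forces $x_\infty \in B$, and combined with $x_\infty \in A$ we obtain $x_\infty \in A \cap B = \{p\}$, i.e.\ $x_\infty = p$.

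Finally, passing to the limit in the inequality $\dist(x_n, p) \geq \varepsilon_0$ using continuity of $\dist(\cdot, p)$ gives $\dist(x_\infty, p) \geq \varepsilon_0 > 0$, contradicting $x_\infty = p$. There is no real obstacle here; this is a textbook compactness-plus-closedness argument, which is presumably why the author chose to omit it. The only mildly subtle point is remembering that the distance-to-a-closed-set function is continuous and that $\dist(x,B)=0$ together with $B$ closed implies $x\in B$ — both completely standard facts.
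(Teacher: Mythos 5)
The paper omits the proof of this lemma (it is grouped with several others introduced by the remark that "their proofs are standard, so we have omitted them"), so there is no paper proof to compare against; your argument is the standard compactness-plus-closedness argument the author evidently had in mind, and it is correct.
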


\begin{lem}\label{lemma: transverse intersections stay close when perturbing}
Let $D$ be a parametrized disk and let $C$ be a curve, both with compact domains, such that $C$ intersects $D$ in exactly one point $p\in \R^3$ and such that this intersection is transverse. For every $\varepsilon>0$ there exists $\delta>0$ such that for every parametrized disk $\tilde{D}$ with the same domain as $D$, every curve $\tilde{C}$ with the same domain as $C$, and every $\tilde{q}\in\tilde{C}\cap\tilde{D}$,
$$||D-\tilde{D}||, ||C-\tilde{C}||<\delta \implies ||\tilde{q}-p||<\varepsilon.$$
\end{lem}

\begin{lem}\label{lemma: breaking point clustering is nice}
Let $C_1,C_2,\dots,C_n$ be $n$ curves, all defined on compact intervals such that $p=C_1(x_1)=C_2(x_2)=\dots=C_n(x_n)$ is the only intersection point of $C_1$ and $C_2$. Further assume that every $C_i$ is an embedding. For every $\varepsilon>0$ there exists $\delta>0$ such that the following holds: Let $||C_i - \tilde{C}_i||<\delta$ for all $i$. Let $y_1,\dots,y_n$ be such that $$(\tilde{q}_1,\tilde{q}_2,\dots,\tilde{q}_n) = (\tilde{C}_1(y_1),\tilde{C}_2(y_2),\dots,\tilde{C}_n(y_n)) \in \tilde{C}_1 \times \tilde{C}_2 \times \dots \times \tilde{C}_n$$ minimizes $\diam\left(\{v_1,v_2,\dots,v_n\}\right)|_{v_i\in \tilde{C}_i}$. Then for all $i,j \in \{1,\dots, n\}$, we have
$$||\tilde{q}_i-\tilde{q}_j||, ||\tilde{q}_i-p||,|x_i - y_i|<\varepsilon.$$
\end{lem}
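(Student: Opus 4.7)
The plan is a three-step compactness/continuity argument: first bound the diameter of the minimizing tuple using the natural trial tuple through $(x_1,\ldots,x_n)$, then use the hypothesis $C_1\cap C_2 = \{p\}$ together with \lemref{lemma: intersection of compact and closed set} to pin $\tilde q_1$ near $p$, and finally transfer spatial closeness to parameter closeness via uniform continuity of the inverse of each embedding. Throughout, I would let the final $\delta$ be the minimum of several intermediate thresholds produced in the argument.

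For the diameter bound, I would observe that for any $\delta>0$ with $\|C_i-\tilde C_i\|<\delta$, the tuple $(\tilde C_1(x_1),\ldots,\tilde C_n(x_n))$ has every entry within $\delta$ of $p$, so its diameter is at most $2\delta$. By minimality of $(\tilde q_1,\ldots,\tilde q_n)$ this forces $\|\tilde q_i-\tilde q_j\|\leq 2\delta$ for all $i,j$, which already handles the $\|\tilde q_i-\tilde q_j\|<\varepsilon$ half of the conclusion.

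For locating the minimizer near $p$, I would set $p_1:=C_1(y_1)$ so that $\|p_1-\tilde q_1\|<\delta$. Writing $\tilde q_2=\tilde C_2(y_2)$ with $\|\tilde C_2(y_2)-C_2(y_2)\|<\delta$ and applying the triangle inequality yields $\dist(p_1,C_2)<4\delta$. Since $C_1\cap C_2=\{p\}$, \lemref{lemma: intersection of compact and closed set} applied with $A=C_1$ and $B=C_2$ gives a threshold $\delta_1>0$ such that $\dist(p_1,C_2)<\delta_1$ forces $\|p_1-p\|<\varepsilon/4$. For $\delta$ small enough we then obtain $\|\tilde q_1-p\|<\delta+\varepsilon/4$, and combining with the diameter bound gives $\|\tilde q_i-p\|<3\delta+\varepsilon/4<\varepsilon$ for every $i$.

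For the parameter estimates, each $C_i$ is a continuous injection on a compact interval, hence a homeomorphism onto its (compact) image, so its inverse is uniformly continuous: there is $\delta_2>0$ with $\|C_i(a)-C_i(b)\|<\delta_2\Rightarrow |a-b|<\varepsilon$. Using $\|C_i(y_i)-p\|\leq\|C_i(y_i)-\tilde q_i\|+\|\tilde q_i-p\|<\delta+\varepsilon$, a final shrinking of $\delta$ forces this quantity below $\delta_2$ and yields $|y_i-x_i|<\varepsilon$. The only substantive ingredient is \lemref{lemma: intersection of compact and closed set}, which converts the purely set-theoretic hypothesis $C_1\cap C_2=\{p\}$ into the quantitative statement "close to $C_2$ forces close to $p$"; everything else is routine $\delta$-juggling. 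The main care required is to choose $\delta_1$ and $\delta_2$ \emph{before} the final $\delta$, so that a single choice of $\delta$ simultaneously satisfies $4\delta<\delta_1$, $\delta+\varepsilon<\delta_2$, $3\delta+\varepsilon/4<\varepsilon$, and $2\delta<\varepsilon$.
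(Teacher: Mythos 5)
Your overall strategy is sound and the first two steps are correct: bounding $\diam\{\tilde q_1,\ldots,\tilde q_n\}\leq 2\delta$ by comparison with the trial tuple $(\tilde C_1(x_1),\ldots,\tilde C_n(x_n))$, and then invoking \lemref{lemma: intersection of compact and closed set} with $A=C_1$, $B=C_2$ to pin $C_1(y_1)$ (hence all $\tilde q_i$) near $p$, is exactly the right idea. The paper treats this lemma as standard and omits a proof, so there is no official version to compare against.

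However, your third step has a genuine order-of-quantifiers error. You first establish $\|\tilde q_i - p\|<\varepsilon$, then introduce $\delta_2$ (a uniform-continuity modulus for $C_i^{-1}$ depending on $\varepsilon$) and claim that shrinking $\delta$ drives $\|C_i(y_i)-p\|<\delta+\varepsilon$ below $\delta_2$. But $\delta_2$ will in general be far smaller than $\varepsilon$ --- whenever the inverse of some $C_i$ has a large modulus near $p$, a tiny spatial displacement along the curve corresponds to a large parameter displacement, forcing $\delta_2\ll\varepsilon$ --- and then the constraint $\delta+\varepsilon<\delta_2$ that you list explicitly in your final paragraph is unsatisfiable for any $\delta>0$. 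The fix is simple but must be built in from the start: after choosing $\delta_2$ from $\varepsilon$, run your first two steps not with target $\varepsilon$ for the bound on $\|\tilde q_i-p\|$ but with target $\eta:=\min(\varepsilon,\delta_2/2)$; then $\|C_i(y_i)-p\|<\delta+\eta<\delta_2$ once $\delta<\delta_2/2$, giving $|x_i-y_i|<\varepsilon$, and since $\eta\leq\varepsilon$ the bounds $\|\tilde q_i-\tilde q_j\|,\|\tilde q_i-p\|<\varepsilon$ still follow. With that reordering the argument goes through.
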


\begin{lem}\label{lemma: subcurve stays close when perturbing}
Let $C: I \to \R^3$ be a uniformly continuous curve and $\varepsilon>0$. Then there exists $\delta>0$ such that for all $C'$ with $||C'-C||<\delta$ and all $a,b,a',b'\in I$ with $a<b$, $a'<b'$ and $|a-a'|,|b-b'|<\delta$, we have $||\bar{C}-\bar{C}'||<\varepsilon,$ where
$$\bar{C}, \bar{C}': [\min(a,a'),\max(b,b')] \to \R^3$$
$$\bar{C}(t) = \begin{cases} C(t) \text{ if } t\in [a,b]\\ C(a) \text{ if } t<a \\ C(b) \text{ if } t>b\end{cases}, \quad \bar{C}'(t) = \begin{cases} C'(t) \text{ if } t\in [a',b']\\ C'(a') \text{ if } t<a' \\ C'(b') \text{ if } t>b'\end{cases}.$$
\end{lem}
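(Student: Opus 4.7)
The plan is to do a straightforward case analysis on where $t$ lies in the interval $[\min(a,a'),\max(b,b')]$, using uniform continuity of $C$ to absorb the constant-extension regions. The key observation is that for every such $t$, the value $\bar{C}(t)$ equals $C(s)$ for some $s \in [a,b]$ with $|s-t|<\delta$: if $t \in [a,b]$ this holds trivially with $s=t$; if $t<a$, then $\min(a,a')=a'$ forces $a' \leq t < a$, so $s=a$ satisfies $|s-t|\leq |a-a'|<\delta$; and if $t>b$, the symmetric argument gives $s=b$ with $|s-t|\leq|b-b'|<\delta$. Analogously $\bar{C}'(t)=\tilde{C}(s')$ for some $s' \in [a',b']$ with $|s'-t|<\delta$.

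First I would invoke uniform continuity of $C$ to pick $\delta_1>0$ such that $|s-s'|<2\delta_1$ implies $\|C(s)-C(s')\|<\varepsilon/2$, and then set $\delta:=\min(\delta_1,\varepsilon/2)$. With this choice, for any $t$ in the combined interval, the triangle inequality yields
\begin{align*}
\|\bar{C}(t)-\bar{C}'(t)\| &= \|C(s)-\tilde{C}(s')\| \\
&\leq \|C(s)-C(s')\| + \|C(s')-\tilde{C}(s')\| \\
&< \varepsilon/2 + \|C-\tilde{C}\|_\infty < \varepsilon/2 + \delta \leq \varepsilon,
\end{align*}
since $|s-s'|\leq |s-t|+|t-s'|<2\delta\leq 2\delta_1$. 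Taking the supremum over $t$ gives $\|\bar{C}-\bar{C}'\|<\varepsilon$.

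There is no real obstacle here beyond keeping the case analysis organized; the only delicate point is verifying that, even when the intervals $[a,b]$ and $[a',b']$ barely overlap or are disjoint, the bound $|s-t|<\delta$ still holds. This is why I stated the reduction in terms of $\min$ and $\max$: the definition of the combined interval $[\min(a,a'),\max(b,b')]$ together with the constraints $|a-a'|,|b-b'|<\delta$ forces $t$ to be within $\delta$ of both $[a,b]$ and $[a',b']$, which is exactly what is needed. The hypothesis of uniform continuity (as opposed to mere continuity) is essential because $I$ need not be compact, so a single $\delta$ must work globally.
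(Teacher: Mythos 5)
Your proof is correct. The paper in fact omits proofs of this and the surrounding technical lemmas with the remark that they are standard, and your argument is precisely the standard one: reduce each point $t$ of the combined interval to a nearby parameter $s\in[a,b]$ (respectively $s'\in[a',b']$) at which $\bar{C}(t)=C(s)$ and $\bar{C}'(t)=\tilde{C}(s')$, then split via the triangle inequality into a uniform-continuity term for $C$ and a sup-norm term for $C-\tilde{C}$. The case analysis correctly uses that $t<a$ forces $\min(a,a')=a'$ so $|t-a|\leq|a-a'|<\delta$, and the symmetric cases, and the final choice $\delta=\min(\delta_1,\varepsilon/2)$ closes the estimate.
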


\begin{lem}\label{lemma: interpolation of subcurves stays close when perturbing}
Let $C:I \to \R^3$ be a uniformly continuous curve and let $\varepsilon>0$. Then there exists $\delta>0$ such that the following holds. Let $C'$ satisfy $||C'-C||<\delta$, let $C_i:[a_i,b_i]\to\R^3$, $i=1,\dots,n$, be subcurves of $C$, and let $C'_i:[a'_i,b'_i]\to\R^3$ be subcurves of $C'$ such that
$$|a_i-a'_i|,|b_i-b'_i|<\delta.$$
Then
$$||\widehat{C}-\widehat{C}'||<\varepsilon,$$
where
$$\widehat{C}=\bar{C}_1\frown S_1\frown\bar{C}_2\frown S_2\frown\dots\frown\bar{C}_n\frown S_n,$$
$$\widehat{C}'=\bar{C}'_1\frown S'_1\frown\bar{C}'_2\frown S'_2\frown\dots\frown\bar{C}'_n\frown S'_n$$
are closed curves defined as follows: $\bar{C}_i$ and $\bar{C}'_i$ are defined as in \lemref{lemma: subcurve stays close when perturbing}, and $S_i,S'_i$ are straight line segments interpolating between the curves $\bar{C}_i,\bar{C}_{i+1}$ and the curves $\bar{C}'_i,\bar{C}'_{i+1}$ respectively (here indices are understood to be cyclic).
\end{lem}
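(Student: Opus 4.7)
The plan is to reduce the estimate for the two concatenated curves to \lemref{lemma: subcurve stays close when perturbing} applied to each pair $(\bar{C}_i, \bar{C}'_i)$, together with a short convex-combination estimate for the interpolating straight segments $S_i, S'_i$. To make the comparison of the two concatenations well-defined, I would regard both as parametrized on a single common domain: lay the intervals $[\min(a_i,a'_i), \max(b_i,b'_i)]$ on which $\bar{C}_i$ and $\bar{C}'_i$ are defined end-to-end, and insert a fixed copy of $[0,1]$ between consecutive pieces on which both $S_i$ and $S'_i$ are parametrized affinely. I would take $\delta$ small enough that the modified intervals remain pairwise disjoint and in the original cyclic order, so both concatenations genuinely have the same parameter domain.

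First I would apply \lemref{lemma: subcurve stays close when perturbing} $n$ times with tolerance $\varepsilon/2$ to obtain $\delta_1,\dots,\delta_n>0$ such that $||\bar{C}_i-\bar{C}'_i||<\varepsilon/2$ whenever $||C-C'||<\delta_i$ and $|a_i-a'_i|,|b_i-b'_i|<\delta_i$. This handles the subcurve pieces directly.

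Second, I would control the straight segments via their endpoints. The endpoints of $S_i$ are $C(b_i)$ and $C(a_{i+1})$, and those of $S'_i$ are $C'(b'_i)$ and $C'(a'_{i+1})$. Using uniform continuity of $C$, pick $\delta^*>0$ so that $|s-t|<\delta^*$ implies $||C(s)-C(t)||<\varepsilon/4$. Then for $\delta\leq\min(\delta^*,\varepsilon/4)$ the triangle inequality gives
$$||C(b_i)-C'(b'_i)||\leq||C(b_i)-C(b'_i)||+||C(b'_i)-C'(b'_i)||<\varepsilon/4+\varepsilon/4=\varepsilon/2,$$
and the same for the $a$-endpoints. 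Because $S_i$ and $S'_i$ are affinely parametrized on $[0,1]$, their difference at each parameter is a convex combination of the endpoint differences, so $||S_i-S'_i||<\varepsilon/2$.

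Finally, setting $\delta=\min(\delta^*,\delta_1,\dots,\delta_n,\varepsilon/4)$ gives $||C-C'||<\varepsilon/2<\varepsilon$, since the sup norm of a concatenation is the maximum of the sup norms of its pieces. The only real obstacle is the bookkeeping of parametrizations needed to place both concatenations on a common domain so that comparing them at a given parameter value is meaningful; once that is arranged, the estimate is an immediate consequence of \lemref{lemma: subcurve stays close when perturbing} and the convex-combination observation for line segments.
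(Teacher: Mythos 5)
The paper does not supply a proof of this lemma; it is one of the technical lemmas for which the author writes ``Since their proofs are standard, we have omitted them.'' So there is no proof to compare against, but your argument is the natural one and it is correct. Fixing a common parameter domain (the intervals $[\min(a_i,a'_i),\max(b_i,b'_i)]$ for the $\bar C_i$-pieces, with a copy of $[0,1]$ between them for the affinely parametrized segments), reducing the $\bar C_i$-pieces to \lemref{lemma: subcurve stays close when perturbing}, and handling the interpolating segments by a triangle inequality on the endpoints together with the convexity estimate $\|S_i(t)-S_i'(t)\|\le(1-t)\|C(b_i)-C'(b_i')\|+t\|C(a_{i+1})-C'(a_{i+1}')\|$ is exactly the expected route, and the final $\delta=\min$ of all the intermediate constants closes the estimate. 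Two small remarks. First, the $\delta$ supplied by \lemref{lemma: subcurve stays close when perturbing} is already uniform over all intervals, so you do not need to invoke it $n$ times and take a minimum of $\delta_1,\dots,\delta_n$; a single application suffices (this also means no dependence on $n$ is actually required). Second, the hypothesis in the statement reads $|a_i-a_i'|,|b_i-b_i'|<\varepsilon$, which is almost certainly a typo for $<\delta$ (compare \lemref{lemma: subcurve stays close when perturbing} and the way the lemma is invoked in the main proof, where both closeness parameters are made small); you implicitly adopted this corrected reading, which is the right thing to do, since with the literal $<\varepsilon$ hypothesis the endpoint estimate via uniform continuity would not go through.
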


\begin{thm}
Let $L=C_1 \cup C_2$ be a two-component link, with a suitable conjoined circle decomposition. Then there is a neighborhood $U$ around $L$, such that
$$L'\in U \implies \len(L')\geq \len(L)$$
with equality if and only if $L'$ has a suitable conjoined circle-decomposition with the same index sets, the same index functions $h^A$ and $h^B$, and the same linking scheme. 
\end{thm}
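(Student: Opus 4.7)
My plan is to apply Theorem~\ref{theorem: estimate of length of almost closed curves for multiple curves} one circle at a time and then sum. Fixing $L'$ in a small neighborhood $U$ of $L$, I first use the stability lemmas (Lemmas~\ref{lemma: stability of transverse intersections},~\ref{lemma: breaking point clustering is nice},~\ref{lemma: transverse intersections stay close when perturbing},~\ref{lemma: subcurve stays close when perturbing},~\ref{lemma: interpolation of subcurves stays close when perturbing}) to transfer the combinatorial data of the conjoined circle decomposition of $L$ to $L'$: every cluster $\alpha_l$ (respectively $\beta_l$) corresponds to a small cluster of points in $L'$ of diameter $\delta^A_l$ (respectively $\delta^B_l$); every circle $K_j$ corresponds to a perturbed family of curve segments in $L'$ whose cyclic concatenation with straight line interpolations is an almost-closed curve; and each transverse piercing of the spanning disk by the opposite component persists and stays clustered near the original center $z_j$.

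Applying Theorem~\ref{theorem: estimate of length of almost closed curves for multiple curves} to each A-circle $K_j$, with $m = z_j$ and $A$ the cluster of piercing points (whose diameter is bounded by a constant times $\delta^B_{h^B(j)}$), yields
\begin{align*}
\sum_{i\in A_j} \len(\tilde{C}_1|_{\tilde{I}_i}) \;\ge\; 2\pi + 2\,\delta^B_{h^B(j)} - \sum_{l\text{ A-cluster}} T^A_l(j)\,\varphi(\delta^A_l),
\end{align*}
where $T^A_l(j)\in\{0,1\}$ counts transitions of the cyclic concatenation of $A_j$ at cluster $\alpha_l$. Because $C^j$ parametrizes its unit circle bijectively, each spatial point is hit once, so each cluster contributes at most one such transition; hence $T^A_l := \sum_j T^A_l(j) = \#\{j \mid \alpha_l \text{ along } C^j_1\} = n^A_l$, the quantity controlled by suitability. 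Summing over all A-circles, then symmetrically over all B-circles, and adding gives
\begin{align*}
\len(L') \;\ge\; 2\pi(m_A+m_B) + \sum_{l\text{ A}}(2m_l\delta^A_l - n^A_l\varphi(\delta^A_l)) + \sum_{l\text{ B}}(2m'_l\delta^B_l - n^B_l\varphi(\delta^B_l)),
\end{align*}
with $m_l = \#(h^A)^{-1}(l)$ and $m'_l = \#(h^B)^{-1}(l)$.

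Each summand has the form $(2m-n)\delta - n(\varphi(\delta)-\delta)$; since $2m-n \ge 1$ by suitability and $\varphi(\delta)-\delta = \arcsin(2\delta)-2\delta = O(\delta^3)$, shrinking $U$ makes every summand non-negative, vanishing only when $\delta = 0$. This proves $\len(L') \ge \len(L) = 2\pi(m_A+m_B)$. In the equality case every $\delta^A_l$ and $\delta^B_l$ is zero, so each cluster of $L'$ degenerates to a single point, each almost-closed curve becomes a genuine closed curve of length exactly $2\pi$, and the equality case of Gehring's theorem forces it to be a unit circle; the $\diam(A)=0$ form of Theorem~\ref{theorem: estimate of length of almost closed curves for multiple curves} then forces the opposite component to pierce this circle transversely at its center. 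Combined with the preservation under small perturbation of the linking scheme, of the functions $h^A, h^B$, and of the ``two distinguishable indices'' condition, this shows that $L'$ has a suitable conjoined circle decomposition with exactly the prescribed data. The main obstacle I anticipate is the book-keeping that threads the stability lemmas together: correctly identifying interpolation endpoints with their breaking-point clusters, bounding interpolation lengths and piercing-cluster diameters by the same small quantities, and keeping $z_j$ inside each perturbed convex hull. Once this is done, the identity $T^A_l = n^A_l$ -- which is what makes the suitability inequality $n^A_l < 2m_l$ do its job -- together with the first-order behavior of $\varphi$ makes the final estimate go through almost automatically.
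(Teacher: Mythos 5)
Your proposal follows the same strategy as the paper's proof: use the stability lemmas to transfer the decomposition data to a nearby link, apply Theorem~\ref{theorem: estimate of length of almost closed curves for multiple curves} circle by circle, sum, and invoke the suitability inequality to show the error terms cannot dominate. The bookkeeping you identify as the main obstacle --- matching interpolation endpoints with clusters, tracking the counting functions, keeping $\tilde z_j$ in the perturbed convex hull --- is exactly what the paper works through.

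There is, however, one place where your phrasing hides a real issue and where the paper does something more specific than ``bound by a constant.'' You describe the piercing cluster $A$ as having diameter ``bounded by a constant times $\delta^B_{h^B(j)}$'' and then conclude $\sum_{i\in A_j}\len(\tilde C_1|_{\tilde I_i})\ge 2\pi + 2\,\delta^B_{h^B(j)}-\dots$. This step needs the piercing-cluster diameter to be bounded \emph{below} by $\delta^B_{h^B(j)}$, with constant exactly $1$ --- an upper bound, or a lower bound with constant less than $1$, would not deliver the cancellation you rely on, because suitability gives only the strict inequality $n^B_l < 2\,\#(h^B)^{-1}(l)$ and provides no slack for a degraded constant. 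The paper secures this by \emph{defining} the perturbed breaking points $\tilde a_i$, $\tilde b_i$ as the choices (one from each interval $H_i$) that \emph{minimize} the diameter of the corresponding cluster; since the piercing points $S_{i,j}$ are also one per interval, the minimized diameter is automatically $\le \diam(S_j)$, i.e.\ $\delta^B_{h^B(j)}\le\diam(S_j)$ with the exact constant $1$. If you make that minimizing choice explicit when you set up $\delta^A_l$ and $\delta^B_l$, the rest of your argument goes through as you describe and matches the paper.
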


\begin{proof}
Let $\tilde{L}=\tilde{C}_1 \cup \tilde{C}_2$ be a thick link with  $||L-\tilde{L}||=d$. 
\\
For each $A$-index $l$ and $B$-index $l$, pick closed intervals $\{H_i\}_{i\in \alpha_l}$ with $a_i \in H_i^{\circ}$, and pick closed intervals $\{H_i\}_{i\in \beta_l}$ with $b_i \in H_i^{\circ}$, such that $\{H_i \mid i \text{ }A\text{-index}\}$ are pairwise disjoint and such that $\{H_i \mid i \text{ }B\text{-index}\}$ are pairwise disjoint. Now, define sets of points $\{\tilde{a}_i\}_{i \in \alpha_l}$ such that $\tilde{a}_i \in H_i$ and $\diam(\tilde{C}_1(\{\tilde{a}_i\}_{i \in \alpha_l}))$ is minimized, and $\{\tilde{b}_i\}_{i \in \beta_l}$  with $\tilde{b}_i \in H_i$ and such that $\diam(\tilde{C}_2(\{\tilde{b}_i\}_{i \in \beta_l}))$ is minimized. Since the circle decomposition is suitable, we can assume that there are always two curves in $\{C_1 \mid_{H_i}\}_{i \in \alpha_l}$ only intersecting in $C_1(\alpha_l)$ and two curves in $\{C_2 \mid _{H_i}\}_{i \in \beta_l}$ only intersecting in $C_2(\beta_l)$. This allows us to apply \lemref{lemma: breaking point clustering is nice} to the families of curves $\{C_1 \mid_{H_i}\}_{i \in \alpha_l}$ and $\{C_2 \mid _{H_i}\}_{i \in \beta_l}$. We get the following result.
\textbf{For any $\mathbf{\delta_1>0}$, there is $\mathbf{\delta>0}$ such that $\mathbf{d<\delta}$ implies}
$$|a_i-\tilde{a}_i|<\delta_1, \quad|b_i-\tilde{b}_i|<\delta_1,$$
$$\dist(C_1(\alpha_l),\tilde{C}_1(\tilde{a_i}))<\delta_1, \quad\dist(C_2(\beta_l),\tilde{C}_2(\tilde{b_i}))<\delta_1,$$
$$\diam(\tilde{C}_1(\alpha_l))<\delta_1, \quad \diam(\tilde{C}_2(\beta_l))<\delta_1,$$
where $\tilde{C}_1\left(\alpha_l\right) := \{\tilde{C}_1(\tilde{a}_i) \mid i\in \alpha_l\}$ and $\tilde{C}_2(\beta_l):=\{\tilde{C}_2(\tilde{b}_i) \mid i\in \beta_l\}$. For $A$-indices $i$, we define $\tilde{I}_i:=[\tilde{a}_{i},\tilde{a}_{i+1}]$ and for $B$-indices $i$ we define $\tilde{I}_i:=[\tilde{b}_i,\tilde{b}_{i+1}]$. The decompositions $\left(\tilde{C}_1,\{\tilde{a}_i\},\{A_j\},\{\alpha_l\}\right)$ and $\left(\tilde{C}_2,\{\tilde{b}_i\},\{B_j\},\{\beta_l\}\right)$ should be thought of as almost-closed versions of a conjoined circle decomposition of $\tilde{L}$. To every circle $K_j$ of the link $L$ we associate a spanning disk $D_j$ and a center $z_j$. To investigate the length of $\tilde{L}$, we define perturbed versions of $z_j$ and $D_j$ for $\tilde{C}_1$, the case for $\tilde{C}_2$ follows analogously. \lemref{lemma: interpolation of subcurves stays close when perturbing} gives us that, \textbf{for every $\mathbf{\delta_2>0}$} the straight line interpolation $\tilde{C}_1^j$ between the curves $\{\tilde{C}_1 |_{\tilde{I}_i} \mid i \in A_j\}$ satisfies $$||C_1^j-\tilde{C}_1^j||<\delta_2$$\textbf{ as long as $\mathbf{\delta}$ and $\mathbf{\delta_1}$ are sufficiently small.} We define $\tilde{z}_j$ as the center of mass of $\tilde{C}_1^j$ and we define $\tilde{D}_j$ as the $\tilde{z}_j$-cone of $\tilde{C}_1^j$. Since the center of mass of a curve varies continuously with that curve, $||\tilde{z}_j-z_j||$ can be made arbitrarily small. It follows that, as parametrized disks, $||\tilde{D}_j-D_j||$ can be made arbitrarily small. \textbf{Let $\delta_3>0$. For $\delta$, $\delta_1$, $\delta_2$ small enough we have
$$||\tilde{C}_1^j - C_1^j||,||\tilde{D}_j-D_j||<\delta_3.$$} Now let $j$ be an $A$-index and let $l=h^B(j)$, so $C_2(\beta_l)=z_j$. Let $i \in \beta_l$. Then $C_2(b_i) = z_j \in D_j$, and this intersection is transverse, so $C_2|_{H_i}$ and $D_j$ have a transverse intersection. By \lemref{lemma: stability of transverse intersections} and \lemref{lemma: transverse intersections stay close when perturbing}, \textbf{for sufficiently small $\delta_3$, we have} $S_{i,j} \coloneqq \tilde{C}_2(H_i) \cap \tilde{D}_j \neq \emptyset$ and that for $S_j\coloneqq \bigcup\limits_{i \in \beta_l} S_{i,j}$
$$\sup\{||x-\tilde{z}_j|| \mid x\in S_j\},\diam(S_j)<\frac{1}{2}.$$
By construction of $\{\tilde{b}_i \mid i\in \beta_l\}$ we have $\diam(\tilde{C}_2(\beta_l)) \leq \diam(S_j).$
Take $\tilde{z}_j$ as the point in the convex hull of $\tilde{C}_1^j$. Note that by construction $S_j$ is a subset of $\tilde{D}_j$, which is the $\tilde{z}_j$-cone of $\tilde{C}_1^j$, and note that $\sup\{||x-\tilde{z}_j|| \mid x\in S_j\}<\tfrac{1}{2}$.  Let $\varphi(x) = \arcsin(2x)-x$ and consider $C_1^j  = C_{i_1}^{s_1}\frown C_{i_2}^{s_2}\frown \dots\frown C_{i_c}^{s_c}.$
For $s_k=1$, let $x_{k}=i_k$ and $y_k=i_k +1$. For $s_k=-1$, let $x_k=i_k+1$ and $y_k=i_k$. Then $C_{i_k}^{s_k}$ has startpoint $C_1(a_{x_k})$ and endpoint $C_1(a_{y_k})$. By \theoref{theorem: estimate of length of almost closed curves for multiple curves}, we have
\begin{align*}
\sum\limits_{i \in A_j}\len\left(\tilde{C}_1 |_{\tilde{I}_i}\right) \geq &2\pi + 2\diam(S_j) - \sum_{k =1}^c
\varphi(||\tilde{C}_1(\tilde{a}_{y_k})-\tilde{C}_1(\tilde{a}_{x_{k+1}})||)\\
\geq &2\pi + 2\diam(\tilde{C}_2(\beta_{h^B(j)})) - \sum_{k=1}^c
\varphi(\diam(\tilde{C}_1(\alpha_{l(y_k)})))\\
=& 2\pi + 2\diam(\tilde{C}_2(\beta_{h^B(j)})) - \smashoperator{\sum_{\alpha_l \text{ along } K_j}} \varphi(\diam(\tilde{C}_1(\alpha_{l})))
\end{align*}
where we used that $x_{k+1},y_k\in \alpha_{l(y_k)}$. Let $q^A(l)= \diam(\tilde{C}_1(\alpha_l))$, $q^B(l)= \diam(\tilde{C}_2(\beta_l))$, and $f^A(l)=\#\{j \mid \alpha_l \text{ along } C_1^j\}$,  $f^B(l)=\#\{j \mid \beta_l \text{ along } C_2^j\}$. Let $n=\#\{A\text{-indices }j\}$. By summing over all $j$, we get

$$\len(\tilde{C}_1) \geq2n\pi + 2\sum\limits_{j \text{ } A\text{-index }} q^B(h^B(j))- \sum_{l \text{ } A\text{-index}} f^A(l) \cdot \varphi(q^A(l)).$$

Exactly analogously, for $m=\#\{B\text{-indices } j\}$, we have
$$\len(\tilde{C}_2) \geq2m\pi + 2\sum\limits_{j \text{ } B\text{-index }} q^A(h^A(j))- \sum_{l \text{ } B\text{-index}} f^B(l) \cdot \varphi(q^B(l)).$$
In total, we get
\begin{align*}
\len(\tilde{C}_1)+\len(\tilde{C}_2) \geq &2(n+m)\pi  \\
+&\sum_{l \text{ }A\text{-index}} 2\#(h^A)^{-1}(l) \cdot q^A(l) -f^A(l)\cdot \varphi(q^A(l)) \\
+&\sum_{l \text{ }B\text{-index}} 2\#(h^B)^{-1}(l) \cdot q^B(l) - f^B(l) \cdot \varphi(q^B(l))
\end{align*}
Let $$\mu = \min\left(\left\{\frac{2\#(h^A)^{-1}(l)}{f^A(l)}\right\}\cup\left\{\frac{2\#(h^B)^{-1}(l)}{f^B(l)}\right\}\right)>1.$$ Since near zero we have that $\varphi(x)\sim x$ to first order, \textbf{there exists $\delta_4>0$} such that on $[0,\delta_4]$ we have $\varphi(x)\leq \mu x$ with equality if and only if $x=0$. In particular, since $q^A(l),q^B(l)<\delta_1$, \textbf{for $\delta_1<\delta_4$, we have}
$$2\#(h^A)^{-1}(l)\cdot q^A(l) -  f^A(l) \cdot \varphi(q^A(l)) \geq 0,$$
$$2\#(h^B)^{-1}(l)\cdot q^B(l) - f^B(l)\cdot \varphi(q^B(l)) \geq 0,$$
with equality if and only if $q^A(l)=0$ and $q^B(l)=0$ respectively. Thus, for $d=||\tilde{L}-L||$ small enough, we have
$$\len(\tilde{L}) = \len(\tilde{C}_1)+\len(\tilde{C}_2) \geq 2(n+m)\pi = \len(C_1)+\len(C_2) = \len(L).$$
Therefore $L= C_1 \cup C_2$ is a local minimum. Assume now that $\len(\tilde{L})=\len(L)$.\\
\textbf{Claim: Both $\tilde{C}_1$ and $\tilde{C}_2$ circle-decompose with the same index sets as $C_1$ and $C_2$ respectively, and the linking scheme stays the same. Further, the circle decomposition of $\tilde{C}_1$ and $\tilde{C}_2$ is conjoined and suitable.} We have the equality $\len(\tilde{C}_1)+\len(\tilde{C}_2) = 2(n+m)\pi$ if and only if $q^A(l)=0$ for every $A$-index $l$ and $q^B(l)=0$ for every $B$-index $l$. This implies that all clusters $\alpha_l$ and $\beta_l$ map to a single point. So the straight line interpolations used to define the curves $\tilde{C}_1^j$ and $\tilde{C}_2^j$ were unnecessary. Since every circle $K_j$ of one of the curves was linked with some circle $K_{j'}$ of the other curve, and since $\tilde{L}$ is close to $L$, every curve $\tilde{C}_1^j$ will be linked with some curve $\tilde{C}_2^{j'}$ and vice-versa. Every curve $\tilde{C}_1^j$ and every curve $\tilde{C}_2^j$ needs to have length at least $2\pi$, and through the global length constraint of $\len(\tilde{L})=2(n+m)\pi$ we get that every curve $\tilde{C}_1^j$ and every curve $\tilde{C}_2^j$ has to have length exactly $2\pi$, hence is a unit circle by \theoref{theorem: Gehring Problem}. This shows that $\tilde{C}_1$ and $\tilde{C}_2$ have circle decompositions with the same index sets as $C_1$ and $C_2$ respectively. That the linking scheme is the same is trivial since for every pair of indices $(j,j')$, the link $\tilde{K}_j \cup \tilde{K}_{j'}$ is close to $K_j \cup K_{j'}$. It is easy to check that the circle decomposition of $\tilde{C}_1$ and $\tilde{C}_2$ is conjoined, with the same functions $h^A$ and $h^B$ and the same linking scheme as the conjoined circle decomposition of $C_1$ and $C_2$, and that this circle decomposition is therefore suitable.
\end{proof}

We will now prove \theoref{theorem: main theorem of chapter 4}.

\begin{proof}[Proof of \theoref{theorem: main theorem of chapter 4}]
In the last theorem we showed that $S$ is strictly locally minimizing. It remains to be shown that $S$ is closed. Take a sequence $(L_n)_{n\in \N}$ in $S$ converging to some thick link $L=(C_1,C_2)$. Let $\{A_j\},\{\alpha_l\},\{B_j\},\{\beta_l\}$ be the index sets and let $h^A, h^B$ be the index functions belonging to $S$. For each $n\in \N$, consider the sets of breaking points $\{a_{i,n}\}, \{b_{i,n}\} \subseteq S^1$. Since $S^1$ is compact, we can assume that all these sequences $(a_{i,n})_{n\in \N}$ and $(b_{i,n})_{n\in \N}$ converge, else take a subsequence of $(L_n)_{n\in \N}$. We denote the limits by $a_i$ and $b_i$ respectively. Note that for each fixed $n$, the cyclic ordering of $\{a_{i,n}\}$ as well as that of $\{b_{i,n}\}$ are the same, hence the cyclic orderings of $\{a_i\}$ and $\{b_i\}$ are (weakly) preserved when taking limits (but a priori it seems possible that $a_i=a_{i+1}$ or $b_i=b_{i+1}$ for some $i$). We will now show that $(\{A_j\},\{\alpha_l\},\{a_i\}),(\{B_j\},\{b_i\},\{\beta_l\})$ form a suitable conjoined circle decomposition of $L$.\
\\
\textbf{Claim: $\{A_j\}$, $\{\alpha_l\}$, $\{a_i\}$ form a circle decomposition of $C_1$ and $\{B_j\}$, $\{b_i\}$, $\{\beta_l\}$ form a circle decomposition of $C_2$.} We show this claim for $C_1$, the proof for $C_2$ is the same. It is obvious that circles $C_{1,n}^j$ of $C_{1,n}$ converge to circles $C_1^j$ of $C_1$. It is left to show that
$$C_1(a_{i_1})=C_1(a_{i_2}) \iff \text{ there is some } l \text{ with } i_1,i_2 \in \alpha_l.$$
It is easy to see that we have $i_1,i_2 \in \alpha_l \implies C_1(a_{i_1})=C_1(a_{i_2}).$ Now assume that $i_1 \in \alpha_{l_1}$ and $i_2 \in \alpha_{l_2}$ with $l_1\neq l_2$. Let $K_{j_1}$ and $K_{j_2}$ be the unit circles of $C_2$ with $z_{j_1}=C_1(\alpha_{l_1})$ and $z_{j_2}=C_1(\alpha_{l_2})$. Since $l_1\neq l_2$, we have that $C_{1,n}(\alpha_{l_1})\neq C_{1,n}(\alpha_{l_2})$ for every $n\in \N$. It follows that for every $n\in \N$ we have $z_{j_1,n} \neq z_{j_2,n}$ and therefore $K_{j_1,n} \neq K_{j_2,n}$. Since the circle decompositions of every link $L_n$ are suitable, there is some $A$-index $j'$ such that for every $n$ the circle $K_{j',n}$ is non-trivially linked with $K_{j_1,n}$, but trivially linked with $K_{j_2,n}$. Taking limits, we get that $K_{j'}$ is non-trivially linked with $K_{j_1}$ but trivially linked with $K_{j_2}$, hence $K_{j_1}\neq K_{j_2}$. This implies $z_{j_1}\neq z_{j_2}$, since if two different circles $K_{j_1}$, $K_{j_2}$ of $C_2$ had the same center, it would be impossible for $C_1$ to pierce their common center while keeping unit distance from both. Therefore $C_1(a_{i_1})=z_{j_1}\neq z_{j_2}=C_1(a_{i_2})$. This shows that two $A$-indices $i_1,i_2$ are in the same point cluster if and only if $C_1(a_{i_1})=C_1(a_{i_2})$. It is left to check that the intervals $I_i=[a_i,a_{i+1}]$ all have positive length, i.e.\ $a_i\neq a_{i+1}$. If $i,i+1$ are in different point clusters, then by what we have just shown $C_1(a_i)\neq C_1(a_{i+1})$ and hence $a_i\neq a_{i+1}$. If $i,i+1$ are in the same $\alpha_l$, then there is some $j$ with $A_j=\{i\}$. This means that $C_{1,n}([a_{i,n},a_{i+1,n}])=K_{j,n}$ is a unit circle for each $n$. By taking limits, $C_1([a_i,a_{i+1}])=K_j$ is a unit circle, hence $I_i$ has positive length.\\
\textbf{Claim: The circle decompositions of $C_1$ and  $C_2$ are conjoined.} We check the following five conditions:
\begin{itemize}
\item $\dist(C_1,C_2)=1$, 
\item $K_j$ circle of $C_1$ $\implies z_j \in C_2(\{b_i\})$ and every intersection of $C_2$ and $D_j$ is transverse
\item $K_j$ circle of $C_2$ $\implies z_j \in C_1(\{a_i\})$ and every intersection of $C_1$ and $D_j$ is transverse,
\item for every $\alpha_l$ there is some circle $K_j$ of $C_2$ such that $C_1(\alpha_l)=z_j,$
\item for every $\beta_l$ there is some circle $K_j$ of $C_1$ such that $C_2(\beta_l)=z_j.$
\end{itemize}
The first condition is obviously fulfilled. We will show that the second condition is fulfilled; the third one follows analogously. Let $K_j$ be a circle of $C_1$. Let $i$ be such that $z_{j,n} = C_{2,n}(b_{i,n})$ for every $n$. Then by taking limits, $z_j = C_2(b_i)$. That the intersection is transverse can be seen by observing that the arclength parametrization of $C_2$ is $\mathcal{C}^1$ and piecewise $\mathcal{C}^2$.  This fact, together with the observation that $C_2$ keeps unit distance from the boundary of the unit disk $D_j$, means that any intersection of the arclength parametrization of $C_2$ with the unit disk $D_j$ has to be transverse.\\
For every $A$-index $l$, suitability gives some $B$-index $j$ with $h^A(j)=l$. Thus $z_{j,n}=C_{1,n}(\alpha_l)$ for every $n\in\N$, and taking limits gives $z_j=C_1(\alpha_l)$. This proves the fourth condition. The fifth condition follows analogously.\\
The conjoined circle decomposition has the same index functions $h^A$ and $h^B$ as that of $L$. Note that taking limits preserves both thick Hopf links and thick unlinks, so we have $K_j \text{ linked with } K_{j'} \iff K_{j,n} \text{ linked with } K_{j',n}.$ This shows that $L$ has the same linking scheme as each $L_n$.\\
\textbf{Claim: The conjoined circle decomposition of $L$ is suitable.} The conditions for suitability are
\begin{itemize}
\item  Every circle $K_j$ of one of the curves is linked with exactly two circles $K_{j_1},K_{j_2}$ of the other curve,
\item for every $A$-index  $l$: $\#\{j \text{ } A\text{-index } \mid \alpha_l \text{ along } K_j\}<2\#(h^A)^{-1}(l)$,
\item for every $B$-index  $l$: $\#\{j \text{ } B\text{-index } \mid \beta_l \text{ along } K_j\}<2\#(h^B)^{-1}(l)$,
\item for every $\alpha_l$, there are two distinguishable $i_1,i_2\in \alpha_l$,
\item for every $\beta_l$, there are two distinguishable $i_1,i_2\in \beta_l$.
\end{itemize}
The first condition follows trivially from the linking scheme of $L$ and $L_n$ being the same. For the second condition, note that
$$\{j \text{ } A\text{-index } \mid \alpha_l \text{ along } C_1^j\} =\{j \text{ } A\text{-index } \mid \alpha_l \text{ along } C_{1,n}^j\}.$$ Since $h^A, h^B$ are the index functions of both $L$ and $S$, the second condition is fulfilled. The third condition holds analogously. It remains to verify the last two conditions. We show this for the point clusters $\alpha_l$; the proof for the point clusters $\beta_l$ follows analogously. Since there are only finitely many point clusters and each point cluster is finite, after passing to a subsequence we may choose for every $\alpha_l$ two fixed indices $i_1,i_2\in\alpha_l$ which are distinguishable for every $C_{1,n}$.

Fix $\alpha_l$ and the corresponding indices $i_1,i_2$. The two one-sided circle arcs at $a_{i,n}$ are the germs at $C_{1,n}(\alpha_l)$ of the restrictions of $C_{1,n}$ to the two intervals adjacent to $a_{i,n}$. Suppose that $i_1$ and $i_2$ were not distinguishable in the limit. Then one of the one-sided circle arcs at $a_{i_1}$ would agree with one of the one-sided circle arcs at $a_{i_2}$. Let $K_{j_1,n}$ and $K_{j_2,n}$ be the circles supporting the corresponding one-sided arcs at $a_{i_1,n}$ and $a_{i_2,n}$. Their limiting circles $K_{j_1}$ and $K_{j_2}$ contain a common non-trivial circle arc and are therefore the same geometric circle.

Since $K_{j_1}=K_{j_2}$, these circles have the same linking partners. We have already observed that taking limits preserves the linking scheme, so $K_{j_1,n}$ and $K_{j_2,n}$ have the same linking partners for every $n$. Since linking partners uniquely determine geometric circles, we have $K_{j_1,n}=K_{j_2,n}$ for every $n$. As $i_1$ and $i_2$ are distinguishable, the two selected one-sided arcs must therefore be the two opposite sides of this circle at their common point. Opposite sides remain opposite under taking limits, contradicting our assumption that the limiting one-sided arcs agree. Hence $i_1$ and $i_2$ remain distinguishable in the limit.
\end{proof}

We can now apply \propref{proposition: every point of a closed, strictly minimizing set is a sink}.

\begin{cor}
Every $2$-component thick link $L$ that has a suitable conjoined circle decomposition is a local minimum for ropelength and a sink.
\end{cor}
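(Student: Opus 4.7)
The plan is to deduce the corollary as a direct consequence of the two results immediately preceding it, namely \theoref{theorem: main theorem of chapter 4} and \propref{proposition: every point of a closed, strictly minimizing set is a sink}. Given a $2$-component thick link $L$ with a suitable conjoined circle decomposition, I would first extract from that decomposition the combinatorial data it carries: the index sets $\{A_j\},\{\alpha_l\},\{B_j\},\{\beta_l\}$, the index functions $h^A,h^B$, and the linking scheme. I would then let $S$ be the set of all $2$-component links in the same link homotopy class as $L$ which admit a suitable conjoined circle decomposition with exactly this same data. By construction $L \in S$.

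Next I would invoke \theoref{theorem: main theorem of chapter 4}, which guarantees that $S$ is a closed, strictly locally minimizing set for the ropelength functional. In particular, by the definition of strictly locally minimizing set, there exists an open neighborhood $U$ of $L$ such that every $L' \in U$ satisfies $\len(L') \geq \len(L)$, with equality only if $L' \in S$. This immediately establishes that $L$ is a local minimum for ropelength in the sense of the definition given earlier in the paper.

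To upgrade local-minimality to the sink property, I would apply \propref{proposition: every point of a closed, strictly minimizing set is a sink}, taking $X$ to be the space of thick links (with the sup-norm topology on parametrizations used elsewhere in the paper) and $f = \len$. The hypotheses of the proposition require exactly a closed strictly locally minimizing set, which is precisely what \theoref{theorem: main theorem of chapter 4} provides for our chosen $S$. Hence every point of $S$, and in particular $L$, is a sink of the ropelength functional.

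Since this reduces to a two-line verification chaining the theorem into the proposition, I do not expect any genuine obstacle; the only thing to check carefully is that the ambient topological space and the ropelength functional match the abstract setup of \propref{proposition: every point of a closed, strictly minimizing set is a sink}, and that $L$ itself lies in the set $S$ produced by the theorem (which it does tautologically, since $S$ was defined to contain all links sharing $L$'s decomposition data).
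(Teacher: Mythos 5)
Your proposal is correct and is essentially identical to the paper's argument: the paper's entire proof of this corollary is the one-line remark ``We can now apply \propref{proposition: every point of a closed, strictly minimizing set is a sink},'' i.e.\ precisely the two-step chaining of \theoref{theorem: main theorem of chapter 4} into \propref{proposition: every point of a closed, strictly minimizing set is a sink} that you describe. Your extra care about checking $L \in S$ and about the ambient topological space matching the abstract setup is sound but unremarkable, and does not constitute a departure from the paper's route.
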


\subsection{A Gordian unlink and other examples}\label{subsection: A Gordian unlink and other examples}

\begin{rem}\label{remark: linking number is a complete link invariant}
When considering $2$-component links up to link homotopy, the linking number is a complete link invariant, see \cite{LinkGroups}. This means a $2$-component link is (link homotopically) an unlink if and only if its linking number is zero.
\end{rem}

\begin{thm}\label{theorem: a Gordian unlink exists}
A two-component homotopically Gordian unlink exists.
\end{thm}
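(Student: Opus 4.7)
The plan is to exhibit an explicit two-component link $L=(C_1,C_2)$ with a suitable conjoined circle decomposition whose underlying topological link is the unlink, and then deduce the Gordian property from the machinery of this section. The configuration I have in mind is the one suggested by Figure~\ref{figure: 2-component gordian unlink}, augmented as in Figure~\ref{figure: circle decomposition augmented}: $C_1$ traverses three coplanar, pairwise externally tangent blue unit circles whose centers form an equilateral triangle of side~$2$, and $C_2$ traverses three red unit circles, each lying in a plane perpendicular to the blue plane and having two of the blue centers as diametric endpoints. This forces the three red--red tangency points to be precisely the three blue centers, and the three red-circle centers to be precisely the three blue--blue tangency points, so the conjoining conditions $z_j\in C_2(\{b_i\})$ and $z_j\in C_1(\{a_i\})$ both hold, with orthogonal (hence transverse) intersections. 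The augmentation then duplicates each of the six circles in the parametrization so that the strict suitability inequalities on cluster counts can hold.

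Having fixed the configuration, I would carry out three steps. First, verify that $L$ lies in the link-homotopy class of the unlink: by Milnor's classification of two-component link-homotopy classes it suffices to show $\mathrm{lk}(C_1,C_2)=0$, which follows because each blue disk is pierced twice by $C_2$ with opposite signs, and vice versa. Second, verify that the augmented decomposition is a suitable conjoined circle decomposition in the sense of the preceding subsection: every unit circle is Hopf-linked to exactly two circles of the opposite color; the augmentation is calibrated so that $2\#(h^A)^{-1}(l)>\#\{j\mid \alpha_l \text{ along } C_1^j\}$ at every blue cluster $\alpha_l$, and symmetrically on the red side; and at each cluster, two distinguishable indices can be chosen because the two locally meeting unit circles lie on opposite sides of their common tangent. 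Third, invoke \theoref{theorem: main theorem of chapter 4} to place $L$ inside the closed, strictly locally minimizing set $S$ of all thick links sharing this conjoined-circle-decomposition data.

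To conclude, suppose for contradiction that $L$ is split as a thick link, so that a thick homotopy $\gamma$ takes $L$ to a separated link $L'=(C_1',C_2')$ with $\conv(C_1')\cap \conv(C_2')=\emptyset$. By \propref{proposition: every point of a closed, strictly minimizing set is a sink} and its proof, any continuous path from $L$ along which length never exceeds $\len(L)$ remains inside $S$; in particular $L'\in S$ and $L'$ inherits the same linking scheme as $L$. Thus $L'$ contains some unit circle of $C_1'$ Hopf-linked to some unit circle of $C_2'$, which is impossible once $\conv(C_1')$ and $\conv(C_2')$ are disjoint. This contradiction combined with step~one shows that $L$ is in the unlink homotopy class but not split as a thick link, so $L$ is a Gordian unlink. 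The main obstacle will be step~two: making the augmentation precise enough so that the strict cluster inequalities hold at every one of the six clusters, while keeping the explicit geometry of the doubled parametrization tractable.
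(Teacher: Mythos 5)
Your proposal is correct and takes essentially the same approach as the paper: the paper's proof is a single sentence asserting that one ``carefully checks'' the configuration of Figure~\ref{figure: 2-component gordian unlink}, parametrized as in Figure~\ref{figure: circle decomposition augmented}, has a suitable conjoined circle decomposition with linking number zero, and then implicitly invokes the sink machinery. You have simply spelled out the details the paper leaves to the reader --- the geometry of the six unit circles, the linking-number-zero verification, the suitability inequalities after augmentation, and the deduction via \propref{proposition: every point of a closed, strictly minimizing set is a sink} and \theoref{theorem: main theorem of chapter 4} that a separated configuration could not lie in $S$ --- all of which match the intended argument.
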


\begin{proof}
One carefully checks that the link shown in \figref{figure: 2-component gordian unlink}, parametrized as indicated in \figref{figure: circle decomposition augmented} has a suitable conjoined circle decomposition, and that the link has linking number equal to zero.
\end{proof}

\begin{thm}\label{theorem: Gordian pairs exist in all link homotopy classes of two components}
For every link homotopy class with $2$ components, there exists a thick link in that class which is a non-global, local minimum and a sink for ropelength. In particular, there are Gordian pairs in every link homotopy class with $2$ components. 
\end{thm}

\begin{proof}
By \remref{remark: linking number is a complete link invariant}, it therefore suffices to show that for every $n\in\Z$, there exists a $2$-component thick link $L$ with linking number $n$ which is a non-global, local minimum and a sink. It suffices to show the claim for non-negative $n$, as the linking number of a $2$-component link changes by a factor of $-1$ when changing the orientation of one of the curves in it link.\\
Let $(C_1,C_2)$ be the Gordian unlink from \theoref{theorem: a Gordian unlink exists} and with parametrization indicated as in \figref{figure: circle decomposition augmented}. If $n=0$, we can just take $L=(C_1,C_2)$ and we are done, so let $n\geq 1$. Let $C_1$ be the curve with colors yellow, green, and blue, parametrized by starting at yellow-grey breaking point of the point cluster $F$ and going along the grey segment. Let $S_1$ be the curve parametrizing the grey unit circle around $E$ of the sketch, parametrized with the same orientation it has as a part of $C_1$. Let $C_2$ be the curve with the colors red, purple, and orange, parametrized by starting at the red–grey breaking point of the point cluster $A$ and going along the grey segment. Let $S_2$ be the curve parametrizing the grey unit circle around $F$, parametrized with the same orientation it has as a part of $C_2$. Now consider $\tilde{C}_1$ defined by concatenating $k$ copies of $C_1$ and $n$ copies of $S_1$, and consider $\tilde{C}_2$ defined by concatenating $k$ copies of $C_2$ and one copy of $S_2$. Set $L=(\tilde{C}_1,\tilde{C}_2)$.\\ 
Let $\link(\cdot,\cdot)$ refer to the linking number of two curves. Then 
\begin{align*}
    \link(\tilde{C}_1,\tilde{C}_2) =&k^2\cdot \link(C_1,C_2)+ nk\cdot\link(S_1,C_2)\\
    +&k\cdot\link(C_1,S_2) + n\cdot\link(S_1,S_2)\\
    =& 0+0+0+n\cdot\link(S_1,S_2) =   n
\end{align*}  
where we used that $\link(C_1,C_2)=\link(S_1,C_2)=\link(C_1,S_2)=0$. It is trivial to give $(\tilde{C}_1,\tilde{C}_2)$ a conjoined circle decomposition. As long as $k\geq \max(2,n-1)$, this circle decomposition will be suitable. This shows that $L$ is a local minimum and a sink. For $n\geq 1$, there is a thick link $L'$ in the link homotopy class of $L$ with $\len(L')=2(n+1)\pi$. On the other hand,
$$\len(L)=24k\pi+2(n+1)\pi=(24k+2n+2)\pi,$$
so $\len(L')<\len(L)$. Thus $L$ is not a global minimum and $(L,L')$ is a Gordian pair.
\end{proof}

\section*{Acknowledgements}
The author is grateful to John Sullivan for suggesting the study of thick links related to the $4$-component configuration described in \sectref{An isotopically Gordian split link exists}. The author thanks the anonymous referee for pointing out numerous small mistakes and for helpful suggestions which improved the manuscript. The author thanks Keeran Cross for creating \figref{figure: parametrization of circle decomposition} and \figref{figure: circle decomposition augmented} based on hand-drawn sketches and instructions provided by the author.


\begin{thebibliography}{10}

\bibitem{ThinGordianUnlinks}
J.~Ayala Hoffmann,
\emph{Thin Gordian Unlinks},
Proceedings of the Royal Society of Edinburgh, Section A: Mathematics (2025), 1--15.
\url{https://doi.org/10.1017/prm.2025.27}



\bibitem{GordianUnlinks}
J.~Ayala and J.~Hass,
\emph{Gordian Unlinks},
e-print, (2025), \url{https://arxiv.org/abs/2502.08499}


\bibitem{CriticalityForGehringProblem}
J.~Cantarella, J.~H.~G. Fu, R.~Kusner, J.~M. Sullivan, and N.~C. Wrinkle,
\emph{Criticality for the Gehring link problem},
Geometry \& Topology \textbf{10} (2006), 2055--2115.

\bibitem{MinimumRopelengthOfKnotsAndLinks}
J.~Cantarella, R.~B. Kusner, and J.~M. Sullivan,
\emph{On the minimum ropelength of knots and links},
Inventiones mathematicae \textbf{150} (2002), no.~2, 257--286.

\bibitem{TopAndPhysLinkTheoryDist}
A.~Coward and J.~Hass,
\emph{Topological and physical link theory are distinct},
Pacific Journal of Mathematics \textbf{276} (2015), no.~2, 387--400.

\bibitem{QuadrisecantsRopelength}
E.~Denne, Y.~Diao, and J.~M. Sullivan,
\emph{Quadrisecants give new lower bounds for the ropelength of a knot},
Geometry \& Topology \textbf{10} (2006), no.~1, 1--26.

\bibitem{LowerBoundsThickKnots}
Y.~Diao,
\emph{The lower bounds of the lengths of thick knots},
Journal of Knot Theory and Its Ramifications \textbf{12} (2003), no.~1, 1--16.


\bibitem{RopelengthAlmostLinear}
Y.~Diao, C.~Ernst, A.~Por, and U.~Ziegler,
\emph{The ropelengths of knots are almost linear in terms of their crossing numbers},
Journal of Knot Theory and Its Ramifications \textbf{28} (2019), no.~14.


\bibitem{FixedPointTheory}
A.~Granas and J.~Dugundji,
\emph{Fixed Point Theory},
Springer, New York, (2003).


\bibitem{Huneke1969MountainClimbing}
J.~P. Huneke,
\emph{Mountain Climbing},
Transactions of the American Mathematical Society \textbf{139} (1969), 383--391.


\bibitem{GeometryPhysicsKnots}
V.~Katritch, J.~Bednar, D.~Michoud, R.~G. Scharein, J.~Dubochet, and A.~Stasiak,
\emph{Geometry and physics of knots},
Nature \textbf{384} (1996), 142--145.


\bibitem{GordianPairOfLinks}
R.~Kusner and W.~Kusner,
\emph{A Gordian Pair of Links},
Geometriae Dedicata \textbf{217} (2023), article 47.

\bibitem{XaraxUnlinks}
R.~Kusner and W.~Kusner,
\emph{The Xarax Unlinks are Physically Gordian},
Circle Packings, Minimal Surfaces, and Discrete Differential Geometry Poster Session Abstracts, ICERM, February 2025,
\url{https://app.icerm.brown.edu/assets/521/8574/8574_4906_021120251500_Slides.pdf}.


\bibitem{LinkGroups}
J.~Milnor,
\emph{Link Groups},
Annals of Mathematics, Second Series \textbf{59} (1954), no.~2, 177--195.

\bibitem{GordianUnknot}
P.~Pieranski, S.~Przybyl, and A.~Stasiak,
\emph{Gordian Unknots}, (2001),
e-print: \url{https://arxiv.org/abs/physics/0103080}

\bibitem{Whittaker1966MountainClimbing}
J.~V. Whittaker,
\emph{A Mountain-Climbing Problem},
Canadian Journal of Mathematics \textbf{18} (1966), 873--882.

\bibitem{TopologicalAnalysis}
G.~T. Whyburn,
\emph{Topological Analysis},
Princeton University Press, Princeton, NJ, (1964).

\end{thebibliography}
\end{document}